\documentclass[12pt]{article}

\usepackage{graphicx}
\usepackage{bbm}
\usepackage{amsmath}
\usepackage{amsthm}
\usepackage{amsfonts}
\usepackage{amssymb}
\usepackage{xcolor}
\usepackage{comment}
\usepackage{tcolorbox}
\usepackage{enumerate}
\usepackage{mdframed}
\usepackage[dvipsnames]{xcolor}
\tcbuselibrary{breakable}

\newcommand{\eee}{{\rm e}}

\newcommand{\me}{\mathbb{E}}
\newcommand{\mn}{\mathbb{N}}
\newcommand{\mr}{\mathbb{R}}

\DeclareMathOperator{\1}{\mathbbm{1}}

\newcommand{\var}{{\rm Var \,}}

\newcommand{\dd}{{\rm d}}
\newcommand{\ii}{{\rm i}}

\newcommand{\mmp}{\mathbb{P}}

\newtheorem{thm}{Theorem}[section]
\newtheorem{lemma}[thm]{Lemma}

\newtheorem{example}[thm]{Example}

\theoremstyle{definition}

\theoremstyle{remark}
\newtheorem{rem}[thm]{Remark}

\newmdenv[
linecolor=ForestGreen,
linewidth=1mm,
topline=false,
bottomline=false,
rightline=false,
innerleftmargin=10pt,
innerrightmargin=-5pt
]{greenbar}

\newmdenv[
linecolor=NavyBlue,
linewidth=1mm,
topline=false,
bottomline=false,
rightline=false,
innerleftmargin=10pt,
innerrightmargin=-5pt
]{bluebar}

\newmdenv[
linecolor=RawSienna,
linewidth=1mm,
topline=false,
bottomline=false,
rightline=false,
innerleftmargin=10pt,
innerrightmargin=-5pt
]{orangebar}

\begin{document}
\title{On tail behavior of infinite sums of independent indicators}\date{}
\author{Alexander Iksanov\footnote{Faculty of Computer Science and Cybernetics, Taras Shevchenko National University of Kyiv, Ukraine; e-mail address:
iksan@univ.kiev.ua} \ \ and \ \ Valeriya Kotelnikova\footnote{Faculty of Computer Science and Cybernetics, Taras Shevchenko National University of Kyiv, Ukraine; e-mail address: valeria.kotelnikova@unicyb.kiev.ua}}
\maketitle

\begin{abstract}
Let $Y=\sum_{k\ge 1} \mathbbm{1}_{A_k}$ be an infinite sum of the indicators of independent events.
We investigate a precise (as opposed to logarithmic) first-order asymptotic behavior of the tail probabilities $\mathbb{P}\{Y\ge n\}$ and the point probabilities $\mathbb{P}\{Y=n\}$ as $n\to\infty$.
Our analysis provides a reasonably complete classification of the asymptotic behaviors covering most cases of practical interest. These general results are then applied to specific examples where the success probabilities $r_k:=\mathbb{P}(A_k)$ decay polynomially $r_k\sim ck^{-\beta}$ or (sub-, super-) exponentially $r_k\sim ce^{-k^\beta}$, yielding the asymptotic tail and point probabilities in explicit forms.

As briefly discussed in the paper, infinite sums of independent indicators arise naturally in numerous settings as diverse as the range of Poissonized samples, the infinite Ginibre point processes and decoupled renewal processes, and records in the $F^\alpha$ scheme. We also explore connections between our results and the theory of Hayman-admissible functions, total positivity, and the Laguerre-Pólya class of type~I.
\end{abstract}

\noindent Key words: exponential change of measure; Hayman-admissible functions; 
independent indicators; random series;
tail behavior.

\noindent 2020 Mathematics Subject Classification: Primary:
60F10, 60G50
\hphantom{2020 Mathematics Subject Classification: } Secondary: 30D15

\section{Introduction}\label{sect:intro}

\subsection{General classification} \label{sec:classif}
Denote by $A_1$, $A_2$, \ldots independent events defined on a common probability space $(\Omega,\cal{F},\mmp)$ with
$r_k:=\mmp(A_k)$ for $k\in\mn:=\{1,2,\ldots\}$. Put $Y:=\sum_{k\ge 1} \1_{A_k}$ and note that, by the Borel-Cantelli lemma, the series converges almost surely
if, and only if, $\sum_{k\ge 1} r_k<\infty$. Further, if $\sum_{k\ge 1}r_k=\infty$, then the series diverges almost surely. Therefore, throughout this paper we assume that $\sum_{k\ge 1} r_k<\infty$.

Our earlier works \cite{Buraczewski+Iksanov+Kotelnikova:2024+} and \cite{Iksanov+Kotelnikova:2024} were concerned with laws of the iterated logarithm for infinite sums of the indicators of independent events parameterized by $t$ as $t\to\infty$ (a central limit theorem in this setting is an immediate consequence of the Lindeberg-Feller theorem). In the present work the parameter $t$ is kept fixed or equivalently there is no parametrization. Our purpose is
to find a precise first-order asymptotic behavior of the distribution tail $\mmp\{Y\ge n\}$ and the point probability $\mmp\{Y=n\}$ as $n\to\infty$.

Investigating an asymptotic decay of probabilities that $Y$ takes extremely large values only makes sense provided there are infinitely many positive $r_k$. On the other hand, it is possible that some $r_k$, or even infinitely many $r_k$, are equal to $0$. Removing from $Y$ the indicators $\1_{A_k}$ with $\mmp(A_k)=0$ does not change $Y$. Hence, in what follows it is tacitly assumed that all $r_k$ are positive. Since the distribution of $Y$ is invariant under permutations of the events, we can and do enumerate them so that the sequence $(r_k)_{k\ge1}$ is nonincreasing.

Put $\psi_k(s):=\log(r_k\eee^s+1-r_k)$ for $s\in\mr$ and $k\in\mn$. Then for $s\in\mr$
\begin{multline}\label{eq:me_e^sY}
	\me [\eee^{sY}]=\sum_{n\ge0}\mmp\{Y=n\}\eee^{sn}=\prod_{k\ge 1}(r_k\eee^s+1-r_k)=\exp\Big(\sum_{k\ge1}\psi_k(s)\Big)=:\exp(\psi(s))<\infty.
\end{multline}
The function $\psi$ is infinitely differentiable on $\mr$ with
\begin{equation}\label{eq:series}
	\psi^\prime(s)=\sum_{k\geq 1}\frac{r_k\eee^s}{r_k\eee^s+1-r_k}\quad\text{and}\quad\psi^{\prime\prime}(s)=\sum_{k\ge1}\frac{(1-r_k)r_k\eee^s}{(r_k\eee^s+1-r_k)^2},\quad s\in\mr.
\end{equation}
To obtain the first equality, we have represented $\psi^\prime(s)$ as the limit and then used the dominated convergence theorem to
interchange the passage to a limit and infinite summation.
The second equality is justified analogously. By Fatou's
lemma, $\lim_{s\to\infty}\psi^\prime(s)=+\infty$. The function $\psi^\prime$ is strictly increasing (and consequently,
$\psi$ is strictly convex).
In view of $\psi^\prime(0)=\sum_{k\geq 1}r_k<\infty$, for each integer $n\geq \lfloor \psi^\prime(0)\rfloor+1$, there exists a unique solution in $(0,\infty)$ to the equation $\psi^\prime(s)=n$ that we denote by $s_n$. Plainly, $n\to\infty$ if, and only if, $s_n\to\infty$.

To understand origins of the classification given below, note that $\psi^\prime(s_n)=n$ may be rewritten as
\begin{equation}\label{eq:core}
	\sum_{k=1}^n \frac{1-r_k}{r_k\eee^{s_n}+1-r_k}=\sum_{k\ge n+1}\frac{r_k\eee^{s_n}}{r_k\eee^{s_n}+1-r_k}.
\end{equation}

Here are our main results.

\begin{tcolorbox}[colback=blue!5,colframe=blue!40!black,title={Regime $\psi^{\prime\prime}(s_n)\to\infty$}]
	\begin{thm}\label{thm:caseB}
		As $n\to\infty$, the following are equivalent:
		\begin{enumerate}[(a)]
			\item $\displaystyle\sum_{k=1}^n \frac{1-r_k}{r_k\eee^{s_n}+1-r_k}=\sum_{k\ge n+1}\frac{r_k\eee^{s_n}}{r_k\eee^{s_n}+1-r_k}\to\infty$;
			
			\item $\psi^{\prime\prime}(s_n)\to\infty$;
			
			\item $\sum_{k=1}^n r_k^{-1}\eee^{-s_n}\to\infty$ and $\sum_{k\ge n+1} r_k\eee^{s_n}\to\infty$.
		\end{enumerate}
		If any of (a)-(c) holds, then
		\begin{equation}\label{eq:caseB}
			\mmp\{Y\ge n\}\sim\mmp\{Y=n\}\sim \frac{\exp(\psi(s_n)-s_n\psi^\prime(s_n))
			}{(2\pi \psi^{\prime\prime}(s_n))^{1/2}},\quad n\to\infty.
		\end{equation}
	\end{thm}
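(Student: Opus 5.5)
We use the exponential change of measure at $s_n$, a local limit theorem under the tilted law, and elementary comparisons of series to get the equivalence of (a)–(c).

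\emph{Step 1: equivalences.} Write $p_k:=r_k\eee^{s_n}/(r_k\eee^{s_n}+1-r_k)\in(0,1)$. Then
\[
\psi''(s_n)=\sum_{k\ge1}p_k(1-p_k).
\]
Put
\[
L_n:=\sum_{k=1}^n(1-p_k)=\sum_{k=1}^n\frac{1-r_k}{r_k\eee^{s_n}+1-r_k},\qquad R_n:=\sum_{k\ge n+1}p_k ,
\]
so that \eqref{eq:core} reads $L_n=R_n$. Clearly
\[
\psi''(s_n)\le L_n+R_n=2L_n,
\]
which gives (b)$\Rightarrow$(a). For the converse, the $p_k$ are nonincreasing in $k$ because $r_k$ is. If $p_n\le 1/2$, then $1-p_k\ge 1/2$ for $k\ge n+1$, so $\psi''(s_n)\ge R_n/2$. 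If $p_n>1/2$, then $p_k\ge1/2$ for $k\le n$, so $\psi''(s_n)\ge L_n/2$. Either way $\psi''(s_n)\ge L_n/2$, hence (a)$\Rightarrow$(b).

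For (c), use $1-p_k\asymp \min(1,r_k^{-1}\eee^{-s_n})$ and $p_k\asymp\min(1,r_k\eee^{s_n})$, with constants uniform once $r_k\le r_1<1$ (the finitely many $r_k$ near $1$ need separate care; if $r_1=1$, remove those deterministic terms). Monotonicity then gives two facts:
\begin{itemize}
\item[(i)] For $k\le n$ with $r_k\eee^{s_n}<1$, we have $r_j\eee^{s_n}<1$ for all $j\ge k$. Hence $R_n\asymp\sum_{k>n}r_k\eee^{s_n}$ unless the truncation $\min(1,\cdot)$ is active. If it is active, then $r_{n+1}\eee^{s_n}\ge1$, so every term of $L_n$ is $O(1)$, which forces $L_n\le n$ to be compared with the count of truncated terms.
\item[(ii)] A symmetric statement holds for $L_n$.
\end{itemize}
One can check that $L_n=R_n\to\infty$ is equivalent to both untruncated sums diverging, since whenever a truncation is active the other side already contains an unbounded number of order-one terms. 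This case analysis is tedious, but I expect it to be routine.

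\emph{Step 2: tilting.} Define $\mmp_n$ by $\dd\mmp_n/\dd\mmp=\eee^{s_nY-\psi(s_n)}$. Under $\mmp_n$, $Y$ is a sum of independent Bernoulli$(p_k)$ variables with mean $n$ and variance $\sigma_n^2:=\psi''(s_n)\to\infty$. Then
\[
\mmp\{Y=m\}=\eee^{\psi(s_n)-s_nm}\,\mmp_n\{Y=m\},
\]
and
\[
\mmp\{Y\ge n\}=\eee^{\psi(s_n)-s_nn}\,\me_n\big[\eee^{-s_n(Y-n)}\1_{\{Y\ge n\}}\big].
\]
The main obstacle is the local CLT
\[
\sigma_n\,\mmp_n\{Y=n+j\}=(2\pi)^{-1/2}+o(1)\quad\text{uniformly in } |j|=o(\sigma_n).
\]
For a sum of independent Bernoulli variables whose variance tends to infinity, this holds with no extra hypothesis. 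One route is the Fourier bound $|\me_n\eee^{\ii tY}|\le\exp(-c\,t^2\sigma_n^2)$ for $|t|\le\pi$, which follows from $|p\eee^{\ii t}+1-p|^2=1-2p(1-p)(1-\cos t)$; combined with the Lindeberg CLT (Lyapunov applies since the summands are bounded and $\sigma_n\to\infty$), this gives the local theorem by standard inversion. Taking $j=0$ yields the statement for $\mmp\{Y=n\}$.

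\emph{Step 3: the tail.} We have
\[
\mmp\{Y\ge n\}/\mmp\{Y=n\}=\sum_{j\ge0}\eee^{-s_nj}\,\frac{\mmp_n\{Y=n+j\}}{\mmp_n\{Y=n\}}.
\]
It suffices to show that the terms with $j\ge1$ contribute $o(1)$. Here I would show that $s_n\to\infty$ and that $\mmp_n\{Y=n+j\}\le C/\sigma_n$ uniformly in $j$, the latter by the same Fourier bound. The sum is then at most
\[
1+C'\sum_{j\ge1}\eee^{-s_nj}=1+O(\eee^{-s_n})\to1,
\]
which gives \eqref{eq:caseB}.
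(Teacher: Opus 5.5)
\textbf{Gap in (c)$\Rightarrow$(a).} The rest of your proposal is sound and follows the paper's route: tilting at $s_n$, the bound $|p\eee^{\ii t}+1-p|^2=1-2p(1-p)(1-\cos t)$, and the Lindeberg CLT plus dominated convergence in the inversion integral. Two local differences are worth noting, and both work. Your dichotomy on $p_n$ for (a)$\Rightarrow$(b) is cleaner than the paper's: it needs no assumption $r_k\le 1/2$ and gives the constant $1/2$ instead of $1/3$. For the tail, you sum the uniform bound $\mmp_n\{Y=m\}\le C/\sigma_n$ against $\eee^{-s_nj}$ directly, whereas the paper uses Stolz--Ces\`aro together with the local limit at $j=1$.

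Your treatment of (c)$\Rightarrow$(a), however, is not a proof, and the reason you give is wrong. When $r_{n+1}\eee^{s_n}\ge 1$, the fact that every term of $L_n$ is $O(1)$ is an upper bound, which is useless for showing $L_n\to\infty$. It is also false that ``the other side already contains an unbounded number of order-one terms''. In that case every term $1-p_k$, $k\le n$, is below $1/2$ and they may all be small; nothing forces many of them to be of order one.

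\textbf{The missing observation.} By monotonicity, the truncation $\min(1,\cdot)$ cannot be active on both sides at once. Either $r_{n+1}\eee^{s_n}<1$, or $r_{n+1}\eee^{s_n}\ge 1$.
\begin{itemize}
\item If $r_{n+1}\eee^{s_n}<1$, then every term with $k>n$ is untruncated. Hence $p_k\ge r_k\eee^{s_n}/2$ and $R_n\ge\frac12\sum_{k>n}r_k\eee^{s_n}$.
\item If $r_{n+1}\eee^{s_n}\ge 1$, then $r_k\eee^{s_n}\ge 1$ for all $k\le n$. Hence $1-p_k\ge\frac12(r_k^{-1}-1)\eee^{-s_n}$ and $L_n\ge\frac{1-r_{k_1}}{2}\sum_{k_1\le k\le n}r_k^{-1}\eee^{-s_n}$, where $k_1$ is the first index with $r_k<1$. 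The omitted terms contribute only $(k_1-1)\eee^{-s_n}\to0$.
\end{itemize}
Since $L_n=R_n$, this gives $L_n\ge c\min\bigl\{\sum_{k>n}r_k\eee^{s_n},\,\sum_{k_1\le k\le n}r_k^{-1}\eee^{-s_n}\bigr\}\to\infty$ under (c). This is the paper's argument. Your own split on $p_n$ does the same job: $p_n\le 1/2$ gives $p_k\ge r_k\eee^{s_n}/2$ for all $k>n$, and $p_n>1/2$ gives $1-p_k>\frac12(r_k^{-1}-1)\eee^{-s_n}$ for all $k\le n$.

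The converse (a)$\Rightarrow$(c) needs only the upper bounds $1-p_k\le r_k^{-1}\eee^{-s_n}$ and $p_k\le r_k\eee^{s_n}$.
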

\end{tcolorbox}

\begin{tcolorbox}[colback=green!5,colframe=green!40!black,title={Regime $\psi^{\prime\prime}(s_n)\to0$}]
	\begin{thm}\label{thm:caseA}
		As $n\to\infty$, the following are equivalent:
		\begin{enumerate}[(a)]
			\item $\displaystyle\sum_{k=1}^n \frac{1-r_k}{r_k\eee^{s_n}+1-r_k}=\sum_{k\ge n+1}\frac{r_k\eee^{s_n}}{r_k\eee^{s_n}+1-r_k}\to0$;
			
			\item $\psi^{\prime\prime}(s_n)\to0$;
			
			\item $\sum_{k=1}^n r_k^{-1}\eee^{-s_n}\to0$;
			
			\item $\sum_{k\ge n+1} r_k\eee^{s_n}\to0$.
		\end{enumerate}
		If any of (a)-(d) holds, then
		\[
		\mmp\{Y\ge n\}\sim\mmp\{Y=n\}\sim \exp (\psi(s_n)-s_n\psi^\prime(s_n)),\quad n\to\infty.
		\]
	\end{thm}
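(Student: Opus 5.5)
We prove Theorem~\ref{thm:caseA} in two stages: first the equivalence of (a)--(d) by elementary comparison of series, then the asymptotics via the exponential change of measure. Write $x_k:=r_k\eee^{s_n}$ and $q_k:=1-r_k$. The summands of $\psi''(s_n)$ are $q_kx_k/(x_k+q_k)^2$. For $k\le n$ each is bounded by $q_k/(x_k+q_k)$, the left-hand side of (a). For $k\ge n+1$ each is bounded by $x_k/(x_k+q_k)$, the right-hand side of (a). Hence $\psi''(s_n)\le 2\cdot$(a), which gives (a)$\Rightarrow$(b).

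For (b)$\Rightarrow$(a): if $\psi''(s_n)\to0$ then every summand tends to $0$. Since $(r_k)$ is nonincreasing, each $x_k/(x_k+q_k)$ is then close to $0$ or close to $1$. For $k\ge n+1$ we have $x_k/(x_k+q_k)\le$ (a), and I will argue that these terms cannot be near $1$. Indeed, if $x_{n+1}/(x_{n+1}+q_{n+1})$ were near $1$, then all of $x_1,\dots,x_{n+1}$ would give terms near $1$, so $\psi'(s_n)>n$ would follow once the sum of the small deficits $q_k/(x_k+q_k)$, $k\le n+1$, is less than $1$. That sum is essentially bounded by $\psi''(s_n)$ when the terms are near $1$, since $q_k/(x_k+q_k)\approx q_kx_k/(x_k+q_k)^2$ there. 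Thus for large $n$ all terms with $k\ge n+1$ are small, and $x_k/(x_k+q_k)\le 2\,q_kx_k/(x_k+q_k)^2$ whenever $x_k/(x_k+q_k)\le1/2$. Consequently the right-hand side of (a) is at most $2\psi''(s_n)\to0$.

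For (c) and (d), first note $q_1\to1$ is not needed: $q_k/(x_k+q_k)\le 1/x_k=r_k^{-1}\eee^{-s_n}$ gives (c)$\Rightarrow$(a). For the converse, (a)$\to0$ forces each $x_k$ with $k\le n$ to be large, and then $q_k/(x_k+q_k)\ge c\,q_k/x_k$. To handle $q_k$ small (with $r_k$ near $1$, i.e.\ $r_1$ possibly close to $1$), note that $r_k^{-1}\eee^{-s_n}\le r_1^{-1}\cdots$. More cleanly, I split off the finitely many $k$ with $r_k>1/2$: their contribution $r_k^{-1}\eee^{-s_n}\le 2\eee^{-s_n}$ times their number tends to $0$. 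For the remaining $k$ we have $q_k\ge1/2$. Similarly, $x_k/(x_k+q_k)\le x_k/q_k$ and $q_k\ge1/2$ for large $k$ give (d)$\Rightarrow$(a), while the converse uses that (a)$\to0$ forces $x_k$ small for $k>n$.

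For the asymptotics, let $\tilde{\mathbb P}$ be the tilted measure with $\dd\tilde{\mathbb P}/\dd\mmp=\eee^{s_nY-\psi(s_n)}$. Under $\tilde{\mathbb P}$ the indicators remain independent with probabilities $p_k=x_k/(x_k+q_k)$, and
\[
\mmp\{Y=m\}=\eee^{\psi(s_n)-s_n m}\,\tilde{\mathbb P}\{Y=m\}.
\]
By (a), $\tilde{\mathbb P}\{Y=n\}\ge \tilde{\mathbb P}\{\1_{A_k}=1 \text{ for } k\le n,\ \1_{A_k}=0 \text{ for } k>n\}\ge 1-\text{(a)}-\text{(a)}\to1$. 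This gives $\mmp\{Y=n\}\sim\exp(\psi(s_n)-s_n\psi'(s_n))$. For the tail, write
\[
\mmp\{Y\ge n\}=\eee^{\psi(s_n)-s_nn}\,\tilde{\me}\bigl[\eee^{-s_n(Y-n)}\1_{\{Y\ge n\}}\bigr],
\]
and bound the expectation by $\tilde{\mathbb P}\{Y=n\}+\tilde{\mathbb P}\{Y>n\}\to1$, since $\tilde{\mathbb P}\{Y=n\}\to1$.

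The main obstacle is the (b)$\Rightarrow$(a) step: ruling out that the terms sitting near $1$ straddle the index $n$ incorrectly. This needs the careful counting argument sketched above, using monotonicity of $r_k$ and the constraint $\psi'(s_n)=n$ exactly.
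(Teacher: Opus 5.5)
Your proof is correct. The equivalence part is essentially the paper's: you use the same termwise comparisons as Lemma~\ref{lem:ineq}, organized around the tilted probabilities $p_k=x_k/(x_k+q_k)$ and the threshold $p_k\ge 1/2$ rather than the paper's crossing index $K_n$ (where $r_k\eee^{s_n}$ drops below $1$).

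\textbf{Equivalences.} For (b)$\Rightarrow$(a) the paper splits into $K_n\le n$ and $K_n>n$, and in the second case transfers the bound across \eqref{eq:core}. You instead rule out $p_{n+1}\ge 1/2$ altogether. Since $p_k$ is nonincreasing in $k$ and $1-p\le 2p(1-p)$ for $p\ge 1/2$, that case would give $n=\psi'(s_n)\ge n+1-2\psi''(s_n)$, which is impossible once $\psi''(s_n)<1/2$. Written this way, the step you call the main obstacle takes two lines. Two phrasings should be tightened:
\begin{itemize}
\item the ``near $0$ or near $1$'' dichotomy comes from $p_k(1-p_k)\le\psi''(s_n)$, not from monotonicity of $r_k$;
\item in (a)$\Rightarrow$(c) what you need is $x_k\ge q_k$ (i.e.\ $p_k\ge 1/2$), not that $x_k$ is large.
\end{itemize}

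\textbf{Asymptotics.} This is where you genuinely differ from the paper.
\begin{itemize}
\item The paper uses Chebyshev's bound $\tilde{\mathbb P}\{Y\ne n\}\le\psi''(s_n)$. You use a union bound on the configuration event $\{\1_{A_k}=1 \text{ for } k\le n,\ \1_{A_k}=0 \text{ for } k>n\}$.
\item For the tail, the paper uses Stolz--Ces\`aro with $\mmp\{Y=n+1\}/\mmp\{Y=n\}=\eee^{-s_n}\tilde{\mathbb P}\{Y=n+1\}/\tilde{\mathbb P}\{Y=n\}\to 0$. You use the sandwich $\tilde{\mathbb P}\{Y=n\}\le\tilde{\me}[\eee^{-s_n(Y-n)}\1_{\{Y\ge n\}}]\le 1$.
\end{itemize}
Your route is shorter and self-contained. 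As written, however, the tail argument relies on $\tilde{\mathbb P}\{Y=n\}\to 1$ and so is tied to this regime. The paper's ratio argument is reused verbatim for Theorem~\ref{thm:caseC}, where $\tilde{\mathbb P}\{Y=n\}\to c_0<1$. Your sandwich would cover that case too if you bounded the contribution of $\{Y>n\}$ by $\eee^{-s_n}$ instead of by $\tilde{\mathbb P}\{Y>n\}$.
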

\end{tcolorbox}

In the remaining case $\lim_{n\to\infty}\psi^{\prime\prime}(s_n)\in (0,\infty)$ our result is not as clean as the previous ones. Here, we require a number of additional assumptions. A complication arising in this case is briefly discussed in Remark \ref{rem:explanation}.

\begin{tcolorbox}[colback=orange!5,colframe=orange!40!black,title={Regime $\lim_{n\to\infty}\psi^{\prime\prime}(s_n)\in (0,\infty)$},breakable]
	\begin{thm}\label{thm:caseC}
		Assume that
		\begin{enumerate}[(a)]
			\item for each $k\in\mn$, there exists a limit $\lim_{n\to\infty}r_{n+k}\eee^{s_n}=:p_k\in[0,\infty)$. Furthermore, there is a sequence $(\alpha_k)_{k\ge1}$ and an integer $N_1\in\mn$ such that
			$$
			\sup_{n\ge N_1}r_{n+k}\eee^{s_n}\le p_k+\alpha_k \quad \text{and} \quad \sum_{k\ge 1} (p_k+\alpha_k)<\infty;
			$$
			
			\item for each $k\in\mn_0:=\mn\cup\{0\}$, there exists a limit $\lim_{n\to\infty}r_{n-k}^{-1}\eee^{-s_n}=:q_k\in[0,\infty)$. Furthermore, there is a sequence $(\beta_k)_{k\ge0}$ and an integer $N_2\in\mn$ such that
			$$
			\sup_{n\ge N_2}r_{n-k}^{-1}\eee^{-s_n}\le q_k+\beta_k \quad \text{and} \quad \sum_{k\ge 0} (q_k+\beta_k)<\infty;
			$$
			
			\item $q_0>0$.
		\end{enumerate}
		
		Then, as $n\to\infty$, the following limits exist:
		\begin{enumerate}[(i)]
			
			\item $\sum_{k=1}^n r_k^{-1}\eee^{-s_n}\to\sum_{k\ge0}q_k\in(0,\infty);$
			
			\item $\sum_{k\ge n+1} r_k\eee^{s_n}\to\sum_{k\ge1}p_k\in(0,\infty);$
			
			\item $\displaystyle\psi^{\prime\prime}(s_n)\to\sum_{k\ge0}\frac{q_k}{(1+q_k)^2}+\sum_{k\ge1}\frac{p_k}{(1+p_k)^2}\in(0,\infty);$
			
			\item $\displaystyle\sum_{k=1}^n \frac{1-r_k}{r_k\eee^{s_n}+1-r_k}\!=\!\sum_{k\ge n+1}\frac{r_k\eee^{s_n}}{r_k\eee^{s_n}+1-r_k}\to\sum_{k\ge0}\frac{q_k}{1+q_k}\!=\!\sum_{k\ge1}\frac{p_k}{1+p_k}\in(0,\infty).$
		\end{enumerate}
		Furthermore,
		$$
		\mmp\{Y\ge n\}\sim\mmp\{Y=n\}\sim c_0 \exp(\psi(s_n)-s_n\psi^\prime(s_n))
		,\quad n\to\infty,
		$$
		with $c_0:=\mmp\{\sum_{m\ge0}\theta_m=0\}$ satisfying $c_0\in(0,1)$. Here, $\theta_0\in\{-1,0\}$ and $\theta_m\in\{-1,0,1\}$ for $m\in\mn$ are independent random variables with the distributions
		\begin{align*}
			&\mmp\{\theta_0=0\}=\frac{1}{1+q_0}\quad\text{and}\quad \mmp\{\theta_0=-1\}=\frac{q_0}{1+q_0},\\
			&\mmp\{\theta_m=0\}=\frac{1+p_mq_m}{(1+p_m)(1+q_m)},\quad\mmp\{\theta_m=1\}=\frac{p_m}{(1+p_m)(1+q_m)}\\&\text{and}\quad \mmp\{\theta_m=-1\}=\frac{q_m}{(1+p_m)(1+q_m)}.
		\end{align*}
	\end{thm}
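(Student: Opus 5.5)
The plan is an exponential change of measure at the point $s_n$, followed by a local limit analysis of the tilted variable near $n$. Put $\mmp_n\{Y=j\}:=\eee^{s_nj-\psi(s_n)}\mmp\{Y=j\}$. Then
\[
\mmp\{Y=n\}=\exp(\psi(s_n)-s_n\psi^\prime(s_n))\,\mmp_n\{Y=n\}
\]
and
\[
\mmp\{Y\ge n\}=\exp(\psi(s_n)-s_n\psi^\prime(s_n))\sum_{j\ge0}\eee^{-s_nj}\mmp_n\{Y=n+j\}.
\]
Since $s_n\to\infty$, dominated convergence reduces both asymptotics to showing $\mmp_n\{Y=n\}\to c_0$.

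Under $\mmp_n$ the indicators remain independent, with tilted success probabilities $\tilde r_k=r_k\eee^{s_n}/(r_k\eee^{s_n}+1-r_k)$. Write
\[
Y-n=-\sum_{k=1}^n(1-\1_{A_k})+\sum_{k\ge n+1}\1_{A_k}.
\]
For $k=n-m$ with $m\ge0$, the failure probability is
\[
1-\tilde r_{n-m}=\frac{(1-r_{n-m})r_{n-m}^{-1}\eee^{-s_n}}{1+(1-r_{n-m})r_{n-m}^{-1}\eee^{-s_n}}\to\frac{q_m}{1+q_m},
\]
using $r_{n-m}\to0$. For $k=n+m$ with $m\ge1$, the success probability tends to $p_m/(1+p_m)$. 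The domination assumptions in (a) and (b) give uniformly summable bounds: $1-\tilde r_{n-m}\le q_m+\beta_m$ and $\tilde r_{n+m}\le p_m+\alpha_m$. From these, (i)–(iv) follow by dominated convergence for series. The identity $\psi^\prime(s_n)=n$ passes to the limit in (iv). The limits are positive because $q_0>0$, and then (iv) forces $\sum p_k/(1+p_k)>0$.

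For the main step, let $D_n:=\sum_{m\ge0}(1-\1_{A_{n-m}})$ (over $n-m\ge1$) and $U_n:=\sum_{m\ge1}\1_{A_{n+m}}$, which are independent under $\mmp_n$. By the convergences above and the summable domination, the vector $(1-\1_{A_{n-m}})_m$ converges in distribution to independent Bernoulli$(q_m/(1+q_m))$ variables, and likewise for $U_n$ with parameters $p_m/(1+p_m)$. Tightness in $\ell^1$ follows from $\me_n D_n$ and $\me_n U_n$ being bounded by summable tails: the contribution from $m\ge M$ has expectation at most $\sum_{m\ge M}(q_m+\beta_m)$, uniformly in $n$. Hence $U_n-D_n\Rightarrow U-D$, and since the variables are integer valued, $\mmp_n\{Y=n\}=\mmp_n\{U_n-D_n=0\}\to\mmp\{U-D=0\}$.

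It remains to identify this limit with $c_0$. Pair the $m$th down-indicator with the $m$th up-indicator. Their difference takes values $\theta_m\in\{-1,0,1\}$, and the $m=0$ term has no up-partner. The product of the Bernoulli laws gives exactly the stated distributions; for example, $\mmp\{\theta_m=0\}=(1\cdot1+p_mq_m)/((1+p_m)(1+q_m))$. Finally, $c_0\in(0,1)$: it is positive because every $\theta_m$ equals $0$ with probability bounded below by $1/((1+p_m)(1+q_m))$, and the product of these over all $m$ is positive since $\sum(p_m+q_m)<\infty$. It is less than $1$ because $\mmp\{\theta_0=-1\}>0$ and $\mmp\{\sum_{m\ge1}\theta_m=0\}<1$ can be arranged. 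More simply, $\mmp\{\sum\theta_m=-1\}\ge\mmp\{\theta_0=-1,\ \theta_m=0\ \forall m\ge1\}>0$.

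The main obstacle is the uniform tail control needed for the convergence in distribution of the infinite sums, i.e. ensuring that no mass escapes from the far-away indices. This is exactly where hypotheses (a) and (b) are used, and it should be done carefully via a truncation at $M$ combined with Markov's inequality.
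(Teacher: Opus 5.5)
Your proposal is correct. Your treatment of (i)--(iv) by dominated convergence, with majorants $q_m+\beta_m$ and $p_m+\alpha_m$, is the same as the paper's. The central step, $\mmp^{(s_n)}\{Y=n\}\to c_0$, takes a different route.

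\textbf{Central step.} The paper argues analytically. It writes $\me^{(s_n)}[\eee^{\ii z(Y-n)}]=\exp(S_1(n,z)+S_2(n,z))$ as series of principal logarithms. It bounds the terms by $2|\tan(z/2)|\,r_{n+k}\eee^{s_n}$ and $2|\tan(z/2)|\,r_{n-k}^{-1}\eee^{-s_n}$ via Lemma~\ref{lem:log_ineq}. It then passes to the limit for each $z\in(-\pi,\pi)$ and recovers point probabilities by Fourier inversion.

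You work directly with the random variables. You use convergence of finitely many Bernoulli parameters, together with the uniform Markov bound $\sum_{m\ge M}(q_m+\beta_m)+\sum_{m\ge M}(p_m+\alpha_m)$ on the probability that an index beyond $M$ contributes. This gives $U_n-D_n\Rightarrow\sum_{m\ge0}\theta_m$, and integer-valuedness yields convergence at $0$.

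What each route buys:
\begin{itemize}
\item Yours avoids complex logarithms and Lemma~\ref{lem:log_ineq} entirely. It uses hypotheses (a) and (b) in the most transparent way.
\item In your route the pairing $\theta_m=\theta_m^+-\theta_m^-$ (the paper's remark after the theorem) is the natural origin of the limit law. In the paper it is read off from a product of characteristic functions.
\item The paper's route parallels its Fourier-analytic proof of Theorem~\ref{thm:caseB}.
\end{itemize}
Your phrase ``tightness in $\ell^1$'' is loose. What you actually use, and correctly state, is the uniform smallness of the tail probabilities, valid for $n\ge\max(N_1,N_2)$.

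\textbf{Tail equivalence.} For $\mmp\{Y\ge n\}\sim\mmp\{Y=n\}$ you expand $\sum_{j\ge0}\eee^{-s_nj}\mmp_n\{Y=n+j\}$ and apply dominated convergence. The majorant $\eee^{-s_Nj}$ for $n\ge N$ works because $s_n$ increases in $n$. This replaces the paper's Stolz--Ces\`aro ratio argument and is equally short once $c_0>0$ is known.

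\textbf{The bound $c_0<1$.} You exhibit the positive mass $\mmp\{\theta_0=-1,\ \theta_m=0\ \forall m\ge1\}$, where the paper uses nondegeneracy via $\var[\theta_0]>0$. Your sentence about $\mmp\{\sum_{m\ge1}\theta_m=0\}<1$ ``can be arranged'' is superfluous and should be dropped.
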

\end{tcolorbox}
\begin{rem}
	Equivalently, for $m\in\mn$,
	the random variable $\theta_m$ arising in Theorem \ref{thm:caseC} may be represented as $\theta_m=\theta_m^+-\theta_m^-$, where $\theta^+_m$ and $\theta^-_m$ are independent Bernoulli random variables with success probabilities $p_m/(1+p_m)$ and $q_m/(1+q_m)$, respectively. Setting $p_0:=0$, this representation also extends to $m=0$, because $\theta_0^+$ then has success probability $p_0/(1+p_0)=0$ and is therefore equal to $0$ almost surely.
\end{rem}

\begin{rem}
	Put $I(x):=\sup_{s\geq 0}\,(sx-\psi(s))$ for $x>0$. The function $I$ is the Legendre transform of the distribution of $Y$. Since $I(n)=s_n \psi^\prime(s_n)-\psi(s_n)$ for $n\geq \lfloor \psi^\prime(0)\rfloor+1$, the asymptotic relations in Theorems \ref{thm:caseB}, \ref{thm:caseA} and \ref{thm:caseC} can equivalently be written in terms of the Legendre transform.
\end{rem}

\begin{rem}
	Let $X$ be a non-negative integer-valued random variable such that $\me [\eee^{sX}]<\infty$ for all $s\in\mr$ and $\mmp\{X=n\}>0$ for all sufficiently large $n$. In particular, $Y$ satisfies these assumptions. For any such $X$, $\liminf_{n\to\infty}\mmp\{X\ge n\}/\mmp\{X=n\}=1$. However, $\limsup_{n\to\infty}\mmp\{X\ge n\}/\mmp\{X=n\}$ need not be equal to $1$. 
	For instance, let 
	$\mmp\{X=2k\}=\mmp\{X=2k+1\}=c\eee^{-k^2}$ for $k\in\mn_0$,
	where $c$ is a normalizing constant. For this choice, the latter $\limsup$ is equal to $2$.
\end{rem}

\begin{rem}
	This remark is motivated by the fact that $\psi^{\prime\prime}(s_n)$ may have several limit points rather than a unique limit. In such scenarios, the statements in Theorems \ref{thm:caseB}, \ref{thm:caseA} and \ref{thm:caseC}, with the exception of the equivalence $\mathbb{P}\{Y\ge n_j\}\sim\mathbb{P}\{Y=n_j\}$, remain valid when $n$ is replaced by appropriate subsequences $n_j \to \infty$.
\end{rem}

\subsection{General results in action}
Denote by $\zeta$ the (analytic continuation of) the Riemann zeta function. It is defined by $$\zeta(s):=\frac{1}{1-2^{1-s}}\sum_{n\geq 1}\frac{(-1)^{n-1}}{n^s},\quad s\in \mathbb{C}, {\rm Re}\,s>0$$ and
$$
\zeta(s)=2(2\pi)^{s-1}\Gamma(1-s)\sin(\pi s/2)\zeta(1-s),\quad s\in \mathbb{C}, {\rm Re}\,s<0,
$$
where $\Gamma$ denotes the Euler gamma-function.

Define for $k\in\mn_0$
the $k$th Bernoulli polynomial $B_k:\mr\to\mr$ by a generating function
$$
\frac{t\eee^{xt}}{\eee^t-1}=\sum_{k\ge0}B_k(x)\frac{t^k}{k!}.
$$
The first few Bernoulli polynomials are
\begin{align*}
	&B_0(x)=1;\\
	&B_1(x)=x-\frac{1}{2};\\
	&B_2(x)=x^2-x+\frac{1}{6}.
\end{align*}
As usual, $\lfloor x\rfloor$ and $\{x\}$ will denote the integer and fractional part of $x\in\mr$, respectively.

\begin{thm}\label{thm:main}
	The following asymptotic equivalences hold as $n\to\infty$.
	\begin{enumerate}[(a)]
		\item Let $\beta>1$ and $c>0$. If $r_k=ck^{-\beta}$ for $k\in\mn$, then
		\begin{multline*}
			\mmp\{Y\ge n\}~\sim~\mmp\{Y= n\}~\sim~\exp\Big(-\beta n\log n-\beta\Big(\log\Big(\frac{\beta\sin(\pi/\beta)}{\pi c^{1/\beta}}\Big)-1\Big)n\\-\frac{\beta+1}{2}\log n-\frac{\beta}{2}\log(2\beta\sin(\pi/\beta))-\frac{1}{2}\log(2\pi)+\frac{1}{2}\log\beta\Big).
		\end{multline*}
		
		\item Let $\beta\in (0,1)$, $c>0$ and put $\ell=\lfloor (1+\beta)/(2\beta)\rfloor$.
		If $r_k=c\,\exp(-k^{\beta})$ for $k\in\mn$, then
		\vspace{-0pt}
		\begin{multline*}
			\mmp\{Y\ge n\}~\sim~\mmp\{Y=n\}\\~\sim~\Big(\frac{\beta}{2\pi n^{1-\beta}}\Big)^{1/2}
			\exp\Big(-\frac{1}{1+\beta}n^{1+\beta}+\frac{\beta n^{1+\beta}}{1+\beta}\sum_{i=2}^\ell
			\binom{1+1/\beta}{i}(\alpha_\beta(n))^i+n\log c-\frac{1}{2}n^\beta\\+\frac{2}{\beta}\sum_{\text{even }j=0}^{\lfloor 1/\beta\rfloor-1}\binom{1/\beta-1}{j}f_jn^{1-\beta(1+j)}\sum_{i_j=0}^{\ell-j/2-1}\binom{1/\beta-1-j}{i_j}(\alpha_\beta(n))^{i_j}-\zeta(-\beta)\Big).
		\end{multline*}
		Here,
		$f_j:=j!(1-2^{-(j+1)})\zeta(j+2)$ for $j\in\mn_0$, $\alpha_\beta$ is defined by $\alpha_\beta(n)=A_{1,\,\beta}n^{-2\beta}+A_{2,\,\beta}n^{-4\beta}+\ldots+A_{\ell-1,\,\beta}n^{-2(\ell-1)\beta}$, and $A_{i,\,\beta}$ are the constants which can be calculated explicitly.
		
		\item Let $c>0$. If $r_k=c\,\exp(-k)$ for $k\in\mn$, then
		$$
			\mmp\{Y\ge n\}~\sim~\mmp\{Y=n\}~\sim~c_0\exp\Big(-n^2/2+(\log c-1/2)n-1/8+c_1+h_2(1/2)\Big)
		$$
		with the function $h_2$ and the constant $c_1$ as
		defined  in Lemma \ref{lem:beta2}, and a constant $c_0:=\mmp\big\{\sum_{k\ge0}\theta_k=0\big\}$ satisfying $c_0\in(0,1)$.
		Here, the random variables $\theta_k$ are as defined
		in Theorem \ref{thm:caseC}, with $p_k:=\eee^{-k+1/2}$ and $q_k:=\eee^{-k-1/2}$.
		
		\item Let $\beta>1$ and $c>0$.
		If $r_k=c\,\exp(-k^\beta)$ for $k\in\mn$, then
		\begin{multline*}
			\mmp\{Y\ge n\}~\sim~\mmp\{Y=n\}~\sim~ \exp\Big(-\frac{1}{\beta+1}n^{\beta+1}-\frac{1}{\beta}n^{(\beta+1)/2}+\log c\Big(1-\frac{1}{\beta}\Big)n-\frac{1}{2\beta^2}\\-\frac{1}{\beta+1}\sum_{k=1}^{\lfloor \beta\rfloor+1}(-1)^k\binom{\beta+1}{k}B_k\Big(\frac{1}{\beta}n^{-(\beta-1)/2}+\frac{\log c}{\beta} n^{1-\beta}-\frac{\beta-1}{2\beta^2}n^{-\beta}\Big)\\\times\Big(n^{\beta-(k-1)}+\Big(1-\frac{k-1}{\beta}\Big)\1_{\{k\leq \lfloor (\beta+1)/2\rfloor\}}n^{(\beta+1)/2-k}\Big)\Big),
		\end{multline*}
		where $B_k$ are the Bernoulli polynomials.
	\end{enumerate}
\end{thm}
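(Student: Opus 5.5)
The plan is to treat the four parts of Theorem \ref{thm:main} separately. In each part we first decide which regime applies and then invoke the corresponding general result: Theorem \ref{thm:caseB}, Theorem \ref{thm:caseA} or Theorem \ref{thm:caseC}. After that, what remains is a purely analytic computation of $s_n$, $\psi(s_n)-ns_n$ and, where needed, $\psi''(s_n)$, accurate up to $o(1)$ in the exponent.

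To identify the regime we use criterion (c) of Theorems \ref{thm:caseB} and \ref{thm:caseA}. That criterion needs the growth of $s_n$, which we read off from \eqref{eq:core}: heuristically, $r_n\eee^{s_n}$ is of order $1$ up to lower-order corrections.

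Part (a). Here $\eee^{s_n}\approx n^\beta/c$ times a constant. Both $\sum_{k\le n}r_k^{-1}\eee^{-s_n}\asymp n$ and $\sum_{k>n}r_k\eee^{s_n}\asymp n$ diverge, so Theorem \ref{thm:caseB} applies. We write $\eee^{s}=:c^{-1}t^\beta$ and approximate the sums by integrals:
\[
\psi'(s)=\sum_k\frac{1}{1+(k/t)^\beta(1-ck^{-\beta})}.
\]
The main term is $t\int_0^\infty(1+x^\beta)^{-1}\,dx=t\pi/(\beta\sin(\pi/\beta))$, with a correction of order $1$ coming from Euler--Maclaurin. For $\psi(s)$ we use $\sum_k\log(1+(t/k)^\beta)$. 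The bounded error terms must be tracked with Euler--Maclaurin or Mellin-transform asymptotics, since they contribute the constant $-\frac{\beta}{2}\log(2\beta\sin(\pi/\beta))$. Then $\psi''(s_n)\sim n/\beta$ times a constant, which produces the $\frac12\log$ terms.

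Part (b). Here $s_n\approx n^\beta-\log c$ plus corrections. Both sums in (c) behave like $n^{1-\beta}\to\infty$, so again Theorem \ref{thm:caseB} applies, with $\psi''(s_n)\sim n^{1-\beta}/\beta$; this gives the prefactor. We parametrize $s=t^\beta-\log c$ and substitute $k=t(1+u)$. The terms $k\le t$ and $k>t$ are handled via the logistic-type function $1/(1+\eee^{x})$, whose symmetric moments generate the constants $f_j$ (Dirichlet eta values). Inverting $\psi'(s_n)=n$ as a series in $n^{-2\beta}$ defines $\alpha_\beta(n)$; we expand $(1+\alpha)^{1+1/\beta}$ up to the order $\ell$ at which terms become $o(1)$. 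The constant $-\zeta(-\beta)$ comes from Euler--Maclaurin applied to $\sum_{k\le n}k^\beta$.

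Part (c). Here $r_k=c\eee^{-k}$. We verify hypotheses (a)--(c) of Theorem \ref{thm:caseC}: from \eqref{eq:core} one finds $\eee^{s_n}=c^{-1}\eee^{n+1/2+o(1)}$. This is justified by noting that the equation $\sum_{k\ge0}q_k/(1+q_k)=\sum_{k\ge1}p_k/(1+p_k)$, with $q_k=\eee^{-k-\delta}$ and $p_k=\eee^{-k+\delta}$, has the solution $\delta=1/2$ by the obvious symmetry. The required dominating sequences are geometric, and $q_0=\eee^{-1/2}>0$. The exponent is obtained by splitting $\psi(s_n)=\sum_{k\le n}(s_n+\log r_k)+\sum_{k\le n}\log(1+(1-r_k)r_k^{-1}\eee^{-s_n})+\sum_{k>n}\log(1+r_k(\eee^{s_n}-1))$. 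The last two sums converge to explicit series, and these are presumably the quantities $h_2(1/2)$ and $c_1$ of Lemma \ref{lem:beta2}.

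Part (d). Here $s_n=n^\beta-\log c+$ smaller terms. Now $\sum_{k>n}r_k\eee^{s_n}\to0$ because consecutive ratios explode, so Theorem \ref{thm:caseA} applies. The exponent is $\sum_{k\le n}(s_n-k^\beta+\log c)$ plus vanishing terms. Solving \eqref{eq:core} precisely enough gives $s_n=n^\beta-\log c+$ a correction of order $n^{(\beta-1)/2}$; the last summand $1/(1+r_n\eee^{s_n})$ balances $r_{n+1}\eee^{s_n}$. The sum $\sum_{k\le n}k^\beta$ is evaluated by Euler--Maclaurin with Bernoulli polynomials at a shifted argument, which is where $B_k(\cdot)$ appears.

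The main obstacle is the bookkeeping in (b): inverting $\psi'(s_n)=n$ to sufficient precision and keeping every term that does not vanish. I would organize this by proving a two-sided asymptotic expansion of $\psi'(t^\beta)$ and $\psi(t^\beta)$ in powers of $t^{-\beta}$, and only then substitute $t=t_n$.
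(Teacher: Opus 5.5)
\textbf{Parts (a) and (b).} Your plan is the paper's: regime of Theorem~\ref{thm:caseB}, Euler--Maclaurin/Beta-integral expansions of $\psi'$, $\psi''$, $\psi$, and inversion of $\psi'(s_n)=n$. One imprecision in (b): the partial sum that produces $-\zeta(-\beta)$ runs up to $\lfloor t\rfloor$ with $t^\beta=\log c+s_n$, not up to $n$. The gap $t-n$ is of order $n^{1-2\beta}$, so it is unbounded for $\beta<1/2$, and the fractional-part terms have to cancel against boundary terms, as in Lemma~\ref{lem:(b)series2}.

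\textbf{Part (c).} Here you take a genuinely different and more economical route. It rests on the exact identity
\[
\psi(s_n)-ns_n=\sum_{k=1}^n\log r_k+\sum_{k=1}^n\log\Big(1+\frac{1-r_k}{r_k\eee^{s_n}}\Big)+\sum_{k\ge n+1}\log\big(1+r_k(\eee^{s_n}-1)\big),
\]
which uses only $\psi'(s_n)=n$. You locate $s_n$ through the limit form of \eqref{eq:core} and the symmetry $\delta=1/2$; the solution is unique because one side is monotone decreasing and the other increasing in $\delta$. This replaces the function $h$, the numerics of Lemma~\ref{lem:h}, and Lemmas~\ref{lem:beta1}, \ref{lem:beta2}. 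You still need the a priori bound that $\delta_n:=s_n+\log c-n$ is bounded: if $\delta_n\to\pm\infty$ along a subsequence, one side of \eqref{eq:core} tends to $0$ and the other to $\infty$.

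Dominated convergence then gives the constant $2\sum_{j\ge0}\log(1+\eee^{-j-1/2})$, while $\sum_{k\le n}\log r_k=-n^2/2+(\log c-1/2)n$ exactly. This constant is not ``$h_2(1/2)$ and $c_1$'' but $c_1+h_2(1/2)-1/8$. To see it, take $f(b)=\sum_{k\ge0}\log(1+\eee^{-k+b})+\sum_{k\ge1}\log(1+\eee^{-k-b})=h_2(b)+b^2/2+b/2+c_1$ from the proof of Lemma~\ref{lem:beta2}; then $f(1/2)=1/2+2\sum_{j\ge0}\log(1+\eee^{-j-1/2})$.

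\textbf{Part (d).} The same identity does all the work. Under Theorem~\ref{thm:caseA}(c),(d) the last two sums are $o(1)$, so throughout that regime $\exp(\psi(s_n)-ns_n)\sim\prod_{k=1}^n r_k$; here this is $c^n\exp(-\sum_{k=1}^n k^\beta)$. You need no approximate $s_n$, no Lemmas~\ref{lem:d1}, \ref{lem:d2}, and no Bernoulli polynomials at a shifted argument; ordinary Euler--Maclaurin at the integer $n$ suffices. A small correction: your own balance $1/(1+r_n\eee^{s_n})\approx r_{n+1}\eee^{s_n}$ gives $s_n=n^\beta-\log c+\frac12((n+1)^\beta-n^\beta)+o(1)$. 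The correction is therefore $\sim\frac{\beta}{2}n^{\beta-1}$, not of order $n^{(\beta-1)/2}$, though this is harmless.

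You will \emph{not}, however, land on the displayed formula, because the display is off by a constant factor. Its exponent equals $n\log c-\sum_{k=1}^n k^\beta+\frac12\log c+\zeta(-\beta)+o(1)$.
\begin{itemize}
\item For $\beta=2$ the display reduces to $c^{n+1/2}\eee^{-n(n+1)(2n+1)/6}$. Comparing with the single configuration ``exactly $A_1,\dots,A_n$ occur'' gives directly $\mmp\{Y=n\}\sim c^n\eee^{-n(n+1)(2n+1)/6}$; all other configurations have total relative weight $o(1)$ since $r_{n+1}/r_n\to0$ fast.
\item For $\beta\in(1,2)$ the constant in the remark following the theorem should be $-\zeta(-\beta)$, not $\frac12\log c$.
\end{itemize}
The slip is in the last bullet of Step~3 of the paper's proof of (d). For $k=1$ one has $\log(1/a)=n^\beta+n^{(\beta-1)/2}+\log c+o(1)$, and the paper drops the $\log c$; multiplied by $B_1(\{\cdot\})\to-1/2$, this is the missing $-\frac12\log c$. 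The $-\zeta(-\beta)$ from Lemma~\ref{lem:d2} is also lost when the fragments are combined. So carry out your route and state the corrected constant, rather than trying to reproduce the display.
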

\begin{rem}[to Theorem \ref{thm:main}(b)]
	The paragraph containing formula \eqref{eq:eps_precise1} provides an explanation on how to calculate the coefficients $A_{i,\,\beta}$ appearing in Theorem~\ref{thm:main}(b).
	For instance, for $\beta\in(1/3,1)$, as $n\to\infty$,
	$$
		\mmp\{Y\ge n\}~\sim~\Big(\frac{\beta}{2\pi n^{1-\beta}}\Big)^{1/2}
		\exp\Big(-\frac{1}{1+\beta}n^{1+\beta}+n\log c-\frac{1}{2}n^\beta+\frac{2}{\beta}f_0n^{1-\beta}-\zeta(-\beta)\Big),
	$$
	where $f_0=\pi^2/12$. For $\beta\in(1/4,1/3]$, as $n\to\infty$,
	\begin{multline*}
		\mmp\{Y\ge n\}~\sim~\Big(\frac{\beta}{2\pi n^{1-\beta}}\Big)^{1/2}
		\exp\Big(-\frac{1}{1+\beta}n^{1+\beta}+n\log c-\frac{1}{2}n^\beta+\frac{2}{\beta}f_0n^{1-\beta}\\
		+\frac{1}{\beta}\Big(\frac{1}{\beta}-1\Big)\Big(2\Big(\frac{1}{\beta}-1\Big)c_1^2-4\Big(\frac{1}{\beta}-1\Big)c_1f_0+\Big(\frac{1}{\beta}-2\Big)f_2\Big)n^{1-3\beta}-\zeta(-\beta)\Big),
	\end{multline*}
	where $f_0=c_1=\pi^2/12$ and $f_2=7\pi^4/360$.
\end{rem}

\begin{rem}[to Theorem \ref{thm:main}(d)]
	For instance, for $\beta\in(1,2)$, the result reads: as $n\to\infty$
	$$
		\mmp\{Y\ge n\}~\sim~\mmp\{Y=n\}\sim \exp\Big(-\frac{1}{\beta+1}n^{\beta+1}
		-\frac{1}{2}n^{\beta}+(\log c)n-\frac{\beta}{12}
		n^{\beta-1}+\frac{1}{2}\log c\Big).
	$$
\end{rem}

\section{Some motivating examples}

Our interest in the random variable $Y$ was raised by its appearance in various guises. To be more precise, we now give some examples.
\begin{example} [The range of Poissonized samples and related variables]\label{ex:range}
	\sloppy Let $(\pi(t))_{t\geq 0}$ be a Poisson process of unit intensity, which is independent of $\xi_1$, $\xi_2,\ldots$ independent identically distributed random variables taking infinitely many values in $\mn$ and having the distribution $p_j:=\mmp\{\xi_1=j\}$ for $j\in\mn$. Fix $t>0$ and denote by ${\rm Ran}(\xi, \pi(t))$ the range of the sample $\xi_1,\ldots,\xi_{\pi(t)}$, that is, the number of distinct values attained by the sample. Also, for $k\in\mn$, denote by $\pi_k(t)$ the number of times that the value $k$ appears in the sample. By the thinning property of Poisson processes, the variables $\pi_1(t)$, $\pi_2(t),\ldots$ are independent, and $\pi_k(t)$ has the Poisson distribution of mean $tp_k$. Then ${\rm Ran}(\xi, \pi(t))=\sum_{k\geq 1}\1_{\{\pi_k(t)\geq 1\}}$ is a version of $Y$, with $r_k=1-\eee^{-tp_k}$ for $k\in\mn$.
	
	Analogously, the number of values appearing at least $j$ times, precisely $j$ times, and a positive
	even number of times, given by $\sum_{k\geq 1}\1_{\{\pi_k(t)\geq j\}}$, $\sum_{k\geq 1}\1_{\{\pi_k(t)=j\}}$ and $\sum_{k\geq 1}\1_{\{\pi_k(t)\in 2\mn
		\}}$ are also versions of $Y$, with $r_k=1-\eee^{-tp_k}(1+tp_k+\ldots+(tp_k)^{j-1}/(j-1)!)$, $r_k=\eee^{-tp_k}(tp_k)^j/j!$ and $r_k=\sum_{i\in 2\mn
	}\eee^{-tp_k}(tp_k)^i/i!=(1-\eee^{-tp_k})^2/2
	$, respectively.
	
	Actually, there is a one-to-one correspondence between the distributions of the ranges of Poissonized samples and the distributions of $Y$ for which $r_k<1$ for all $k\in\mn$ (if $r_k=1$ for at least one $k$, then $\mmp\{Y=0\}=0$, whereas a Poissonized sample has an empty range with positive probability).
	Indeed, {\it every} such random variable $Y=\sum_{k\ge 1} \1_{A_k}$ with $\sum_{k\ge1}r_k<\infty$ has the same distribution as ${\rm Ran}(\xi, \pi(t_0))$ with $t_0:=\sum_{k\ge1}|\log(1-r_k)|$ and $p_j=\mmp\{\xi_1=j\}=-\log(1-r_j)/t_0$ for $j\in\mn$.
	With this at hand, we can derive a universal upper bound for $\mmp\{Y\ge n\}$. Since ${\rm Ran}(\xi, \pi(t_0))\le\pi(t_0)$ almost surely,
	we conclude with the help of Stirling's formula that as $n\to\infty$
	$$
	\mmp\{Y\ge n\}\le\mmp\{\pi(t_0)\ge n\}\sim\mmp\{\pi(t_0)= n\}=\eee^{-t_0}\frac{t_0^n}{n!}\sim \frac{\eee^{-t_0}}{\sqrt{2\pi n}}(t_0\eee)^n\eee^{-n\log n}.
	$$
	In particular, this implies that $\me[\eee^{sY}]<\infty$ for all $s>0$.
\end{example}
\begin{example}[The infinite Ginibre point process and decoupled renewal processes]
	Let $\xi_1$, $\xi_2,\ldots$ be independent copies of a nonnegative random variable~$\xi$. Put $S_n=\xi_1+\ldots+\xi_n$ for $n\in\mn$. The random sequence $(S_n)_{n\ge1}$ is called {\it standard random walk} with nonnegative jumps. Let $\hat S_1$, $\hat S_2,\ldots$ be independent random variables such that, for each $n\in\mn$, $\hat S_n$ has the same distribution as $S_n$. Following \cite{Alsmeyer+Iksanov+Kabluchko:2024+}, we call the sequence $(\hat S_n)_{n\ge1}$ a decoupled standard random walk.
	
	Let $\Theta$ be the infinite Ginibre point process on the set of complex numbers $\mathbb{C}$, that is, a simple point process such that,
	for any $k\in\mn$ and any pairwise disjoint Borel subsets $B_1,\ldots, B_k$ of $\mathbb{C}$, $$\me\Big[\prod_{j=1}^k \Theta(B_j)\Big]=\int_{B_1\times\ldots\times B_k}{\rm det}\big(\eee^{z_i\bar z_j}\big)_{1\leq i,j\leq k}\rho({\rm d}z_1)\ldots\rho({\rm d}z_k),$$
	where $\bar{z}$ is the complex conjugate of $z\in \mathbb{C}$, {\rm det} denotes the determinant, and $\rho$ is a Gaussian distribution defined by $\rho({\rm d}z)=\pi^{-1}\eee^{-|z|^2}{\rm d}z$ for $z\in\mathbb{C}$.
	Fix any $t>0$. According to an infinite version of Kostlan’s result \cite{Kostlan:1992}, $\Theta\{z\in\mathbb{C}: |z|<t^{1/2}\}$ has the same distribution as $\sum_{n\geq 1}\1_{\{\hat S_n\leq t\}}$, which is an instance of $Y$. Here, $\hat S_1$ has an exponential distribution of unit mean. More generally, the variable $\sum_{n\geq 1}\1_{\{\hat S_n\leq t\}}$ which is the value of a decoupled renewal process at time $t$ is an instance of $Y$ even if the distribution of $\hat S_1$ is arbitrary nondegenerate.
\end{example}
\begin{example}[Records in the $F^\alpha$ scheme]
	Let $(\alpha_k)_{k\geq 1}$ be positive numbers satisfying $\sum_{k\geq 1}\alpha_k<\infty$ and $F$ a continuous distribution function on $\mr$. Let $\eta_1$, $\eta_2,\ldots$ be independent random variables such that $\mmp\{\eta_k\leq x\}=F^{\alpha_k}(x)$ for $x\in\mr$. Noting that $\eta_1$ is a record value, put, for $i\geq 2$,
	$$
		A_i:=\{\eta_i>\max(\eta_1,\ldots, \eta_{i-1})\}=\{\text{a record in the sequence}~ (\eta_k)_{k\geq 1}~\text{occurs at time}~ i\}.
	$$
	According to Lemma 25.4 on p.~109 in \cite{Nevzorov:2001}, $r_i=\mmp(A_i)=\alpha_i/(\alpha_1+\ldots+\alpha_i)$ for $i\geq 2$. Since $\sum_{i\geq 1}\alpha_i<\infty$ is equivalent to $\sum_{i\geq 1}r_i<\infty$, we infer that the variable $1+\sum_{i\geq 2}\1_{A_i}$ which is the number of records in the sequence $(\eta_k)_{k\geq 1}$ is a version of $Y$.
\end{example}
\begin{example}\label{ex:gnedin}
	This example was kindly supplied by Alexander Gnedin. For $\lambda>0$, let $Z$ be a random variable with the distribution $$\mmp\{Z=n\}=\frac{1}{{\rm sinh}(\lambda)}\frac{\lambda^{2n+1}}{(2n+1)!},\quad n\in\mn_0$$ (the right-hand side is a restriction of the Poisson distribution to odd numbers). An application of the Weierstrass factorization theorem to the generating function of $Z$
	yields
	$$
		\me [s^{Z}]=\frac{{\rm sinh}(\lambda s^{1/2} )}{{\rm sinh}(\lambda)s^{1/2}}=\frac{\lambda}{{\rm sinh}(\lambda)}\prod_{k\geq 1}\Big(1+\frac{\lambda^2s}{(\pi k)^2}\Big)=\prod_{k\geq 1}\Big(\frac{(\pi k)^2}{(\pi k)^2+\lambda^2}+\frac{\lambda^2}{(\pi k)^2+\lambda^2}s\Big),~~ s\in\mr.
	$$
	Thus, $Z$ is a version of $Y$ with $r_k=\lambda^2 ((\pi k)^2+\lambda^2)^{-1}$ for $k\in\mn$. We note in passing that an application of the Stirling formula yields
	\begin{equation}\label{eq:gnedin}
		\mmp\{Z=n\}~\sim~\frac{\lambda}{4\pi^{1/2}{\rm sinh}(\lambda)}\eee^{-2n\log(2n)}(\lambda \eee)^{2n}n^{-3/2},\quad n\to\infty.
	\end{equation}
	
	By a similar argument, a random variable $\vartheta$ with the distribution  $$\mmp\{\vartheta=n\}=\frac{1}{{\rm cosh}(\lambda)}\frac{\lambda^{2n}}{(2n)!},\quad n\in\mn_0$$ is a version of $Y$ with $r_k=4\lambda^2 ((\pi(2k-1))^2+4\lambda^2)^{-1}$ for $k\in\mn$.
	
	Since both distributions appearing in this example are close to the Poisson distribution, it seems curious that, for no $r_k$, does $Y$ have a Poisson distribution. This follows from Raikov's theorem, see, for instance, Theorem 5.1.2 on p.~132 in \cite{Linnik+Ostrovskii:1977}, which states that if the sum of two independent random variables has a (possibly shifted) Poisson distribution, then so does each summand. It is equally remarkable that the variable $\sum_{k\geq 1}\1_{A_k}\1_{A_{k+1}}$, with $r_k=1/k$ for $k\in\mn$, does have the Poisson distribution of unit mean. The partial history of this intriguing observation can be traced via the references given in \cite{Csorgo+Wu:2000}.
\end{example}

\section{Connection to other fields of mathematics}

\subsection{Connection to analytic function theory}
Assuming a stronger condition $\lim_{s\to\infty}\psi^{\prime\prime}(s)=\infty$, instead of $\lim_{n\to\infty}\psi^{\prime\prime}(s_n)=\infty$, the asymptotic relation for $\mmp\{Y=n\}$ in Theorem \ref{thm:caseB} follows from the theory of \textit{Hayman-admissible} (or simply \textit{admissible}) functions developed by Hayman~\cite{Hayman:1956} in 1956.

Let $\mathbb{C}$ denote the set of complex numbers. Consider a class of functions $f(z)=\sum_{n\ge0}a_nz^n$ for $z\in\mathbb{C}$ which are \textit{admissible}.
For the full definition of \textit{admissible} function we refer to \cite{Hayman:1956}. Here, we only state a sufficient condition (Theorem~11 in \cite{Hayman:1956}).

\begin{thm}\label{thm:hayman_f}
	A function $f$ is \textit{admissible} provided that
	\begin{itemize}
		\item $f$ is an entire function
		of genus zero, that is, $f$ admits a representation
		$$
		f(z)=\text{\rm const }z^m\prod_{k\ge1}\Big(1-\frac{z}{z_k}\Big),
		$$
		where $m$ is the order of the zero $z=0$ (if $f(0)\ne0$, then $m=0$), $z_k$ are nonzero roots of $f$ (the number of $z_k$ may be finite),
		and $\sum_{k\ge1}|z_k|^{-1}<\infty$;
		
		\item $f(x)>0$ for large $x>0$;
		
		\item $\displaystyle b(x):=x\frac{f^\prime(x)}{f(x)}+x^2\Big(\frac{f^\prime(x)}{f(x)}\Big)^\prime\to\infty$ as $x\to\infty$;
		
		\item for some $\delta>0$, $f$ has finitely many zeros in the angle $|\arg z|\le\pi/2+\delta$.
	\end{itemize}
\end{thm}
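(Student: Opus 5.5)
We verify Hayman's definition directly. Put $a(r):=rf'(r)/f(r)$ and keep $b(r)$ as in the statement. We must produce $\delta(r)\in(0,\pi)$ such that three conditions hold as $r\to\infty$:
\begin{itemize}
\item[(i)] $f(r\eee^{\ii\theta})\sim f(r)\exp(\ii\theta a(r)-\theta^2 b(r)/2)$ uniformly in $|\theta|\le\delta(r)$;
\item[(ii)] $f(r\eee^{\ii\theta})=o\big(f(r)b(r)^{-1/2}\big)$ uniformly in $\delta(r)\le|\theta|\le\pi$;
\item[(iii)] $b(r)\to\infty$.
\end{itemize}
Condition (iii) is assumed.

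\textbf{Step 1 (reduction).} Split the zeros of $f$ into the finitely many lying in $|\arg z|\le\pi/2+\delta$ and the rest. This gives $f=\text{const}\,z^m\,p(z)\,g(z)$, where $p$ is a polynomial and $g$ is a genus-zero product with all zeros in $|\arg z|\ge\pi/2+\delta$. Equivalently, $-z_k$ lies in the sector $|\arg w|\le\pi/2-\delta$. I would invoke Hayman's closure property: if $g$ is admissible and $p$ is a polynomial with $f$ positive on large reals, then $f$ is admissible. Failing that, I would check directly that the factors $z^m p(z)$ change $a$ by $O(1)$ and $b$ by $O(1)$, and that they do not spoil (i) or (ii). Since $b\to\infty$, these perturbations are negligible. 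So it suffices to treat $g(z)=\prod_k(1+z/w_k)$ with $w_k=-z_k$ in the sector.

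\textbf{Step 2 (local expansion, condition (i)).} Write $\log g(r\eee^{\ii\theta})=\sum_k\log(1+r\eee^{\ii\theta}/w_k)$. For each factor, differentiate in $\theta$: the first two derivatives at $\theta=0$ sum to $\ii a(r)$ and $-b(r)$. The key per-factor inequality is that, for $x=r/|w_k|$ and $|\arg w_k|\le\pi/2-\delta$, the third $\theta$-derivative of $\log(1+x\eee^{\ii(\theta-\arg w_k)})$ is bounded in modulus by $C_\delta$ times the real part of the second-derivative contribution $x\,\mathrm{Re}\big[\eee^{\ii\varphi}/(1+x\eee^{\ii\varphi})^2\big]$. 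This bound must hold uniformly for $|\theta|$ small. It follows because $1+x\eee^{\ii\varphi}$ stays bounded away from zero relative to $\max(1,x)$ when $\varphi$ stays in a sector avoiding $\pm\pi$. Summing gives a remainder $O(|\theta|^3 b(r))$. Choosing $\delta(r)=b(r)^{-2/5}$ makes this $o(1)$ and yields (i).

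\textbf{Step 3 (global decay, condition (ii)), the main obstacle.} One needs $\log|g(r\eee^{\ii\theta})|-\log g(r)\le -c\min(\theta^2,1)\,b(r)$ uniformly for $|\theta|\le\pi$. Per factor, this amounts to
\[
\log\frac{|1+x\eee^{\ii(\varphi+\theta)}|^2}{|1+x\eee^{\ii\varphi}|^2}\le -c_\delta(1-\cos\theta)\frac{x}{\max(1,x)^2},
\]
with $|\varphi|\le\pi/2-\delta$, where the right-hand side matches that factor's contribution to $b$ up to constants. Once this holds, summation gives $|g(r\eee^{\ii\theta})|/g(r)\le\exp(-c\,b(r)^{1/5})=o(b(r)^{-1/2})$ on $\delta(r)\le|\theta|\le\pi$. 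The factors with nonreal $w_k$ are not individually monotone in $\theta$, so I would pair conjugate zeros. Since $f$ is real on large positive reals, its Taylor coefficients are real, so the zeros come in conjugate pairs. I would then prove the inequality for the real function $\log|1+x\eee^{\ii(\theta+\varphi)}|+\log|1+x\eee^{\ii(\theta-\varphi)}|$ by an elementary but careful case split into $x\le1/2$, $x\ge2$ and $x\approx1$. Obtaining a constant uniform in $\varphi$ is exactly where the condition $|\varphi|\le\pi/2-\delta$ (finitely many zeros in the wider angle) is used, and I expect it to be the delicate computation.
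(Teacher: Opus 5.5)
The paper does not prove this statement. It is quoted from Hayman's 1956 paper (his Theorem 11) and used as a black box, so your proposal is a self-contained alternative to a citation rather than a variant of an argument in the paper.

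As an outline it is sound. The reduction to $g(z)=\prod_k(1+z/w_k)$ with $|\arg w_k|\le\pi/2-\delta$ is legitimate: positivity of $f$ on a real interval forces real Taylor coefficients, hence a conjugate-closed zero set, a real polynomial factor, and $g>0$ on $(0,\infty)$. Each factor's contribution to $b(r)$ is $\asymp\min(x_k,1/x_k)$ precisely because of the sector condition. Your third-derivative bound then gives (i) with $\delta(r)=b(r)^{-2/5}$.

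The step you flag as delicate is short and needs no case split. For a conjugate pair, put $t=\cos\theta$; then
\[
|1+x\eee^{\ii(\theta+\varphi)}|^2\,|1+x\eee^{\ii(\theta-\varphi)}|^2=(1+x^2+2xt\cos\varphi)^2-4x^2\sin^2\varphi\,(1-t^2)=:P(t)
\]
is a convex quadratic in $t$. The chord bound gives $P(t)\le P(1)-4x(1+x^2)\cos\varphi\,(1-t)$. Combined with $P(1)\le(1+x)^4$ and $\cos\varphi\ge\sin\delta$, this yields $\log(P(t)/P(1))\le-\frac{\sin\delta}{4}\frac{x}{\max(1,x)^2}(1-\cos\theta)$, which is exactly what you need.

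In the only case the paper uses, all zeros lie on the negative real axis ($z_k=-(1-r_k)/r_k$). There no pairing is needed, and your Step 3 collapses to the identity $|1+x\eee^{\ii\theta}|^2/(1+x)^2=1-2\sigma^2(1-\cos\theta)$ with $\sigma^2=x/(1+x)^2$. This is precisely the computation in Lemma~\ref{lem:module}.

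The paper's own probabilistic proof of Theorem~\ref{thm:caseB} keeps that global bound but replaces your Taylor/third-derivative Step 2 by the Lindeberg--Feller theorem plus dominated convergence. This lets the paper assume only $\psi''(s_n)\to\infty$ along the saddle points, rather than $b(x)\to\infty$ for all $x\to\infty$. Your route, which is Hayman's setting, instead covers general genus-zero functions with zeros outside a sector.
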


Define a function $a$ by $a(x)=x\frac{f^\prime(x)}{f(x)}$ for $x\in\mr$. According to Corollary I in~\cite{Hayman:1956}, $a$ is positive and increasing on $(x_0,\infty)$ for some $x_0>0$.  For large $n\in\mn$, let $x_n$
be a unique solution to $a(x)=n$. We omit the explanation, which can be found in \cite{Hayman:1956}, why the solution exists and is unique.
The main result of Hayman's paper is the following theorem.

\begin{thm}\label{thm:hayman}
	For an admissible function $f$,
	\begin{equation}\label{eq:hayman}
		a_n\sim \frac{f(x_n)}{x_n^n\sqrt{2\pi b(x_n)}},\quad n\to\infty.
	\end{equation}
\end{thm}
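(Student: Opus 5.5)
The plan is the classical saddle-point argument applied to the Cauchy integral, using the full (Hayman's original) definition of admissibility rather than the sufficient condition of Theorem~\ref{thm:hayman_f}. Recall that $f$ is admissible if $f(x)>0$ for large $x$, $b(x)\to\infty$, and there is a function $\delta(x)\in(0,\pi)$ such that two conditions hold. First, uniformly in $|\theta|\le\delta(x)$,
\[
f(x\eee^{\ii\theta})\sim f(x)\exp\big(\ii\theta a(x)-\theta^2 b(x)/2\big),\quad x\to\infty.
\]
Second, uniformly in $\delta(x)\le|\theta|\le\pi$,
\[
f(x\eee^{\ii\theta})=o\big(f(x)/\sqrt{b(x)}\big),\quad x\to\infty.
\]
Hayman shows that these conditions force $\delta(x)^2b(x)\to\infty$, and I will use this consequence.

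The starting point is Cauchy's formula on the circle of radius $x_n$, the saddle point of $z\mapsto \log f(z)-n\log z$ on the positive axis (indeed $a(x_n)=n$ is exactly the critical-point equation):
\[
a_n=\frac{1}{2\pi}\int_{-\pi}^{\pi}\frac{f(x_n\eee^{\ii\theta})}{x_n^n\eee^{\ii n\theta}}\,\dd\theta .
\]
I split the integral into the central arc $|\theta|\le\delta(x_n)$ and its complement. On the complement, the second admissibility condition bounds the integrand by $o(f(x_n)x_n^{-n}b(x_n)^{-1/2})$ uniformly, so this part contributes $o(f(x_n)x_n^{-n}b(x_n)^{-1/2})$. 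That is negligible relative to the claimed main term.

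On the central arc, the first condition with $a(x_n)=n$ makes the oscillating factors $\eee^{\ii\theta a(x_n)}$ and $\eee^{-\ii n\theta}$ cancel exactly. The integrand therefore equals $(1+o(1))f(x_n)x_n^{-n}\eee^{-\theta^2b(x_n)/2}$ uniformly. Substituting $\theta=u/\sqrt{b(x_n)}$ turns the central contribution into
\[
\frac{f(x_n)}{2\pi x_n^n\sqrt{b(x_n)}}(1+o(1))\int_{|u|\le\delta(x_n)\sqrt{b(x_n)}}\eee^{-u^2/2}\,\dd u .
\]
Because $\delta(x_n)^2b(x_n)\to\infty$, the last integral tends to $\sqrt{2\pi}$. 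Combining the two parts gives \eqref{eq:hayman}.

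The only genuinely delicate point is the interplay between the two conditions, namely ensuring that the Gaussian window $\delta(x)\sqrt{b(x)}$ grows without bound. If this is not taken as part of Hayman's lemma, I would derive it from the fact that $|f(x\eee^{\ii\theta})|\le f(x)$ when the Taylor coefficients are nonnegative, which holds in our application to $\me[z^Y]$. Evaluating the first condition at $\theta=\delta(x)$ and comparing with the second condition at the same $\theta$ gives $\eee^{-\delta^2b/2}=o(b^{-1/2})$, hence $\delta^2b\to\infty$.

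The existence and uniqueness of $x_n$ for large $n$ follow from $a$ being eventually increasing and unbounded (Corollary I of \cite{Hayman:1956}), and I would take this as given. In our setting $f(z)=\me[z^Y]$, and $x_n=\eee^{s_n}$, $a(\eee^s)=\psi^\prime(s)$, $b(\eee^s)=\psi^{\prime\prime}(s)$. The formula then recovers the point-probability part of \eqref{eq:caseB} under the stronger hypothesis $\psi^{\prime\prime}(s)\to\infty$.
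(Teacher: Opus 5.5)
Your argument is correct. It is Hayman's own saddle-point proof: Cauchy's formula on $|z|=x_n$, a split at $|\theta|=\delta(x_n)$, condition (I) on the central arc where $a(x_n)=n$ cancels the phase, condition (II) on the rest, and $\delta^2b\to\infty$ to complete the Gaussian integral. The paper does not reprove this theorem; it quotes it from \cite{Hayman:1956}.

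One small correction: the appeal to $|f(x\eee^{\ii\theta})|\le f(x)$ for nonnegative coefficients is unnecessary. Comparing (I) and (II) at $\theta=\delta(x)$ gives $\eee^{-\delta^2 b/2}=o(b^{-1/2})$, and together with $b\to\infty$ this yields $\delta^2 b\to\infty$ for every admissible $f$. You should not tie this step to the special case $f(z)=\me[z^Y]$.

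Compare this with the paper's own proof of Theorem \ref{thm:caseB}, which is the same computation in probabilistic form for $f(z)=\me[z^Y]$. Since $\me^{(s)}[\eee^{\ii zY}]=f(\eee^{s+\ii z})/f(\eee^{s})$, Fourier inversion under $\mmp^{(s_n)}$ is exactly Cauchy's formula on the circle of radius $\eee^{s_n}=x_n$. The paper, however, never splits the circle and needs neither a uniform local expansion like (I) nor a tail bound like (II). The product of Bernoulli factors gives, via Lemma \ref{lem:module}, the global majorant $\exp(-2u^2/\pi^2)$ on the whole rescaled circle. The Lindeberg--Feller theorem then supplies only pointwise convergence, and dominated convergence finishes.

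What each route buys:
\begin{itemize}
\item The paper's route needs only $\psi^{\prime\prime}(s_n)\to\infty$ along the saddle points, with no admissibility to verify.
\item Hayman's route, which is yours, requires $b(x)\to\infty$ along all $x$ plus conditions (I)--(II). In exchange it applies to arbitrary admissible $f$, not only to products of Bernoulli generating functions.
\end{itemize}
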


\begin{proof}[Analytic proof of the main relation of Theorem \ref{thm:caseB} under $\lim_{s\to\infty}\psi^{\prime\prime}(s)=\infty$]
	Put
	$$f(z):=\sum_{n\ge0} \mmp\{Y=n\}z^n=\prod_{k\ge1}(1-r_k+r_kz).$$ Since $a_n=\mmp\{Y=n\}$,
	the asymptotic behavior of $\mmp\{Y=n\}$ as $n\to\infty$ follows from Theorem \ref{thm:hayman}.
	
	Here are details. First, we point out representations for functions $a$ and $b$. Recall that the function $\psi$ was defined in \eqref{eq:me_e^sY}.
	We start with
	$$
	\frac{f^\prime(x)}{f(x)}=\Big(\sum_{k\ge1}\log(1-r_k+r_kx)\Big)^\prime=(\psi(\log x))^{\prime}=\sum_{k\ge1}\frac{r_k}{1-r_k+r_kx}
	$$
	for $x\ge1$, where we have used \eqref{eq:series} for the last equality.
	Hence, for $x\ge1=:x_0$, the function $a$ is defined by
	$$a(x)=\sum_{k\ge1}\frac{r_kx}{1-r_k+r_kx}.$$ 
	Invoking \eqref{eq:series} again,
	we obtain that
	$$
	\Big(\frac{f^\prime(x)}{f(x)}\Big)^\prime=(\psi(\log x))^{\prime\prime}=-\sum_{k\ge1}\frac{r_k^2}{(1-r_k+r_kx)^2}
	$$
	for $x\ge1$. Therefore, $$b(x)=\sum_{k\ge1}\frac{r_kx}{1-r_k+r_kx}-\sum_{k\ge1}\frac{r_k^2x^2}{(1-r_k+r_kx)^2}=\sum_{k\ge1}\frac{(1-r_k)r_kx}{(1-r_k+r_kx)^2}.$$ 
	Summarizing, we have, for $s\ge 0$,
	\begin{align}
		f(\eee^s)=&\exp(\psi(s));\label{eq:hayman_f}\\
		a(\eee^s)=&\,\psi^\prime(s);\label{eq:hayman_a}\\
		b(\eee^s)=&\,\psi^{\prime\prime}(s).\label{eq:hayman_b}
	\end{align}
	
	Now, we are ready to check the sufficient conditions for admissibility stated in Theorem~\ref{thm:hayman_f}:
	\begin{itemize}
		\item for all $z>0$, $f(z)>0$;
		
		\item the zeros of $f$ are $z_k=-(1-r_k)/r_k$, whence $\arg z_k=\pi$ for all $k\in\mn$;
		
		\item Hadamard's canonical representation for $f$ is
		$$
		f(z)=\prod_{j\ge1}(1-r_j)\prod_{k\ge1}\Big(1+\frac{r_kz}{1-r_k}\Big)=\prod_{j\ge1}(1-r_j)\prod_{k\ge1}\Big(1-\frac{z}{z_k}\Big),
		$$
		which implies that the genus of $f$ is zero;
		
		\item \sloppy in view of \eqref{eq:hayman_b}, the condition $\lim_{x\to\infty}b(x)=\infty$
		is equivalent to $\lim_{s\to\infty}\psi^{\prime\prime}(s)=\infty$.
	\end{itemize}
	
	In view of \eqref{eq:hayman_a}, the equation $\psi^\prime(s_n)=n$
	is equivalent to $a(x_n)=n$ with
	$x_n=\eee^{s_n}$. With this and \eqref{eq:hayman_f} at hand, \eqref{eq:hayman} reads
	$$
	\mmp\{Y=n\}\sim\frac{f(\eee^{s_n})}{(\eee^{s_n})^n\sqrt{2\pi b(\eee^{s_n})}}=\frac{\exp(\psi(s_n)-s_n\psi^\prime(s_n))}{\sqrt{2\pi \psi^{\prime\prime}(s_n)}}, \quad n\to\infty.
	$$
	A proof of $\mmp\{Y\ge n\}\sim\mmp\{Y=n\}$ does not follow directly from Theorem \ref{thm:hayman} and has to be worked out separately, for instance, by verifying relation \eqref{eq:inter1} given below.
\end{proof}

\sloppy The definition of \textit{admissible} function includes the condition $\lim_{x\to\infty}b(x)=\infty$. According to~\eqref{eq:hayman_b}, this limit relation is equivalent to $\lim_{s\to\infty}\psi^{\prime\prime}(s)=\infty$. Hence, Theorem~\ref{thm:hayman} only applies to the distributions of $Y$ satisfying $\lim_{s\to\infty}\psi^{\prime\prime}(s)=\infty$.

To the best of our knowledge, there is no counterpart of Theorem \ref{thm:hayman} in the case $\liminf_{x\to\infty}b(x)<\infty$.
However, there is a recent article \cite{Fernandez+Macia:2023+} investigating the asymptotic behavior of the $n$th order convolutions $\sum_{i_1+\ldots+i_n=k}\mmp\{V=i_1\}\mmp\{V=i_2\}\cdot\ldots\cdot \mmp\{V=i_n\}$ for $k\in\mn$ as $n\to\infty$, where
$V$ is a random variable taking values in $\mn_0$.
The condition $\lim_{x\to\infty}b(x)=\infty$ is not required in that work.

\subsection{Connection to total positivity}\label{sec:positive}

This connection was noticed by Alexander Gnedin. According to \cite{Aissen+Edrei+Schoenberg+Whitney:1951}, a real matrix, finite or infinite, is called \textit{totally positive} if its minors of all orders are nonnegative. A real sequence $a_0,a_1,\ldots (a_0=1)$ is called \textit{totally positive} if the corresponding infinite matrix $D$ is totally positive
\[
D=\begin{pmatrix}
	a_0 & 0 & 0 & \ldots\\
	a_1 & a_0 & 0 &\ldots\\
	a_2 & a_1 & a_0 &\ldots\\
	\vdots & \vdots & \vdots &\ddots
\end{pmatrix}.
\]
\noindent The same paper \cite{Aissen+Edrei+Schoenberg+Whitney:1951} provides necessary and sufficient conditions for total positivity of a sequence.
\begin{thm}\label{thm:totally_positive}
	The sequence $a_0,a_1,\ldots (a_0=1)$ is totally positive if, and only if, its generating function $f$ is of the form
	\[
	f(z)=\sum_{k\ge0}a_kz^k=\eee^{\gamma z}\prod_{k\ge1}\frac{1+\alpha_k z}{1-\beta_k z},
	\]
	where $\gamma\ge0$,
	$\alpha_k\ge0$,
	$\beta_k\ge 0$ for all $k\in\mn$, $\sum_{k\ge1}\alpha_k<\infty$ and $\sum_{k\ge1}\beta_k<\infty$.
\end{thm}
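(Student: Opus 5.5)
The plan is to prove the two implications separately; sufficiency is elementary and necessity carries all the difficulty. Throughout, write $T(f)$ for the lower-triangular Toeplitz matrix $D$ built from the coefficients of $f$. The basic tool is that coefficients multiply by convolution, so $T(fg)=T(f)T(g)$. By the Cauchy--Binet formula, every minor of $T(f)T(g)$ is a sum of products of minors of $T(f)$ and of $T(g)$. Hence the class of totally positive sequences is closed under multiplication of generating functions. It is also closed under coefficientwise limits, because each minor of $T(f)$ is a polynomial in finitely many coefficients.

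\emph{Sufficiency.} I would first check the elementary factors.
\begin{itemize}
\item For $1+\alpha z$, the matrix $T$ is lower bidiagonal with nonnegative entries. Every square submatrix is then either triangular with nonnegative diagonal or has a zero row, so every minor is nonnegative.
\item For $1/(1-\beta z)$, the entries are $T_{ij}=\beta^{i-j}\1_{\{i\ge j\}}$. This kernel factors as $\beta^{i}\beta^{-j}\1_{\{j\le i\}}$ for $\beta>0$, and the indicator kernel $\1_{\{j\le i\}}$ is totally positive, since its minors equal $0$ or $1$. The case $\beta=0$ is trivial.
\item For $\eee^{\gamma z}$, use $(1+\gamma z/m)^m\to\eee^{\gamma z}$ coefficientwise.
\end{itemize}
Finite products are then totally positive by the closure under multiplication. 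Under $\sum\alpha_k<\infty$ and $\sum\beta_k<\infty$, the partial products converge locally uniformly near $0$, hence coefficientwise, so the infinite product is totally positive by the closure under limits.

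\emph{Necessity.} Let $(a_k)$ be totally positive with $a_0=1$.
\begin{enumerate}
\item The $2\times2$ minors give $a_k^2\ge a_{k-1}a_{k+1}$. So $(a_k)$ is log-concave with no internal zeros, which yields $a_k\le a_1^k$. Thus $f$ has positive radius of convergence.
\item The matrix $T(f)^{-1}=T(1/f)$, conjugated by the signature matrix $\mathrm{diag}((-1)^i)$, is again totally positive. This follows from Jacobi's identity for complementary minors of an inverse. Consequently $1/f(-z)$ also generates a totally positive sequence, which gives a duality between zeros and poles.
\item Next I would show that $f$ is meromorphic in $\mathbb{C}$, with all poles on $(0,\infty)$ and all zeros on $(-\infty,0)$. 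The approach is to apply the Hankel and Toeplitz minor inequalities to control the Padé-type denominators of $f$. The ratios of consecutive minors detect the poles $1/\beta_k$ one at a time, with $\beta_k\ge0$. Applying the same argument to $1/f(-z)$ produces the zeros $-1/\alpha_k$. Convergence of $\sum\beta_k$ and $\sum\alpha_k$ follows from evaluating the positive factors at $z=1$ against the finiteness of $f$ and of $1/f(-z)$ near $0$. The identification of the poles, with the Padé analysis done step by step, is part of the main obstacle below.
\item Dividing out gives $g:=f\prod(1-\beta_kz)/\prod(1+\alpha_kz)$. By the sufficiency part and the closure properties, $g$ should again be totally positive, and $g$ is entire and zero-free.
\item It remains to show that $g=\eee^{\gamma z}$ with $\gamma\ge 0$. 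I would use total-positivity inequalities, such as log-concavity of the coefficients of both $g$ and $1/g(-z)$, to bound the growth: $\log M(r,g)=O(r)$, so $g$ has order at most $1$. Hadamard's theorem then gives $g=\eee^{\gamma z}$, and $\gamma=a_1-\sum\alpha_k-\sum\beta_k\ge0$ by positivity of the coefficients.
\end{enumerate}

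The main obstacle is steps 3 and 5: proving meromorphy together with the location of the poles and zeros, and then the growth bound on the zero-free factor. Historically this required Edrei's Nevanlinna-theoretic argument. I would first try to obtain the growth bound directly from the fact that $g(z)$ and $1/g(-z)$ both have nonnegative log-concave coefficients. If that falls short, I would invoke Edrei's estimate for the Nevanlinna characteristic $T(r,g)$ instead.
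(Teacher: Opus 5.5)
There is no proof in the paper to compare with. The statement is quoted from Aissen--Edrei--Schoenberg--Whitney, and the paper points to Thoma and Vershik--Kerov for other proofs. So the question is whether your sketch stands on its own.

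Your sufficiency half does stand on its own. Cauchy--Binet closure under products, the bidiagonal and $1/(1-\beta z)$ factors, and coefficientwise limits give a complete argument. Steps 1--2 of necessity are also correct: log-concavity without internal zeros, and total positivity of $1/f(-z)$ via Jacobi's complementary-minor identity. The rest of the necessity half has genuine gaps.

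\emph{Steps 3--4.} Step 4 is unjustified. Closure under multiplication goes the wrong way, and total positivity is not inherited by quotients: $1+z$ and $1+2z$ are totally positive, but $(1+z)/(1+2z)=1-z+2z^2-\cdots$ is not.

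The missing ingredient is a factor-removal lemma (the route of Aissen--Schoenberg--Whitney). If $f$ is totally positive with finite radius of convergence $1/\beta$, then $\beta=\lim a_{k+1}/a_k$ by log-concavity, and $(1-\beta z)f(z)$ is again totally positive. This lemma, not an unspecified Pad\'e/Hankel analysis, is what makes step 3 work:
\begin{itemize}
\item Iterating it peels off $\beta_1\ge\beta_2\ge\cdots$.
\item $\sum\beta_k\le a_1$ holds because the coefficient $a_1-\beta_1-\cdots-\beta_n$ of each intermediate totally positive function is nonnegative. Your ``evaluation at $z=1$'' makes no sense when $f$ diverges there.
\item The limit $f_\infty=f\prod_k(1-\beta_kz)$ is totally positive. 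It is entire, because its coefficients are dominated by those of $f\prod_{k\le n}(1-\beta_kz)$, whose radius of convergence is $1/\beta_{n+1}\to\infty$.
\item Run the same procedure on $1/f_\infty(-z)$ via your step 2. Then apply step 2 once more together with Pringsheim's theorem. This shows that $g=f_\infty/\prod_k(1+\alpha_kz)$ is totally positive, entire and zero-free.
\end{itemize}
Without such a lemma, neither the location of poles and zeros nor the total positivity of your $g$ is established.

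\emph{Step 5.} This step cannot be carried out with log-concavity. Monotone ratios $c_{k+1}/c_k$ control the radius of convergence, not the order of an entire function: $c_k=(k!)^{-1/2}$ is log-concave and gives order $2$. Imposing the same condition on $1/g(-z)$ does not help. Take $g(z)=\eee^{z+\epsilon z^3}$. It is zero-free of order $3$ and satisfies $1/g(-z)=g(z)$. For small $\epsilon>0$ (e.g.\ $\epsilon=10^{-2}$) its coefficients still appear to be log-concave: the ratio recursion $(n+1)\rho_n=1+3\epsilon/(\rho_{n-1}\rho_{n-2})$ shows this numerically, and a saddle-point estimate gives it for large $n$. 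I have not verified this rigorously for all $n$.

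Proving that a zero-free entire totally positive factor equals $\eee^{\gamma z}$ needs minors of all orders. This is precisely Edrei's theorem, the hard half of the statement. Falling back on ``Edrei's estimate'' therefore means citing necessity, not proving it. That is legitimate, and it is what the paper does, but your proposal should present it that way.
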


Assume that one is interested in a precise first-order asymptotic behavior of $a_n$, the $n$th element of a totally positive sequence,
as $n\to\infty$. Without loss of generality, we can and do assume that all $\beta_k<1$. Indeed, if this is not the case, put
$M:=\max_{k\ge1}\beta_k+1$ and consider
\[
f\Big(\frac{z}{M}\Big)=\sum_{k\ge0}\frac{a_k}{M^k}z^k=\eee^{\frac{\gamma}{M}
	z}\prod_{k\ge1}\frac{1+\frac{\alpha_k}{M} z}{1-\frac{\beta_k}{M} z}.
\]
Then, with $a_k^\prime:=\frac{a_k}{M^k}$ for $k\in\mn_0$, by Theorem \ref{thm:totally_positive}, $(a_k^\prime)_{k\ge0}$ is also a totally positive sequence but with all $\beta_k^\prime:=\frac{\beta_k}{M}<1$. Plainly, a precise first-order asymptotic behavior of $a_n$ immediately follows from that of $a_n^\prime$.

For $\lambda>0$ and $p\in (0,1)$, let $\textrm{Poiss}(\lambda)$, $\textrm{Bern}(p)$ and $\textrm{Geom}(p)$ denote random variables having the Poisson distribution with parameter $\lambda$, that is, $\mmp\{\textrm{Poiss}(\lambda)=j\}=\eee^{-\lambda}
\frac{\lambda^j}{j!}$ for $j\in\mn_0$, the Bernoulli distribution with success probability $p$, that is, $\mmp\{\textrm{Bern}(p)=1\}=p$ and $\mmp\{\textrm{Bern}(p)=0\}=1-p$, and a geometric distribution with parameter $p$, that is, $\mmp\{\textrm{Geom}(p)=j\}=(1-p)^j p$ for $j\in\mn_0$.

Let $f$ be a generating function as in Theorem \ref{thm:totally_positive}. Then $z\mapsto f(z)/f(1)$ is the generating function of a random variable $X$ say, taking nonnegative integer values and satisfying $\mmp\{X<\infty\}=1$, that is,
\[
\frac{f(z)}{f(1)}=\sum_{n\geq 0}\mmp\{X=n\}z^n.
\]
In other words, $a_n$ is now represented as $f(1)\mmp\{X=n\}$. The variable $X$ has the same distribution as $\textrm{Poiss}(\gamma)+\sum_{k\ge1}\textrm{Bern}(\frac{\alpha_k}{1+\alpha_k})+\sum_{\ell\ge1}\textrm{Geom}(1-\beta_\ell)$, where all the summands are independent.
In this setting, our paper is concerned with a precise first-order asymptotic behavior of $\mmp\{\sum_{k\ge1}\textrm{Bern}(\frac{\alpha_k}{1+\alpha_k})=n\}$ as $n\to\infty$. The remaining steps are
\begin{itemize}
	\item to derive a precise first-order asymptotic behavior of
	$\mmp\{\sum_{\ell\ge1}\textrm{Geom}(1-\beta_\ell)=n\}$ as $n\to\infty$,
	
	\item to deduce an asymptotic behavior of $\mmp\{X=n\}$ using the fact that
	$X$ is the sum of three independent
	random variables with known asymptotic behaviors of their point probabilities.
\end{itemize}

Our results are useful for deriving the precise asymptotic behavior
of $a_n$ when the geometric component is absent, that is, $\beta_\ell=0$ for all $\ell \ge 1$. Conversely, if at least one $\beta_\ell>0$, the geometric sum dominates. Denoting $\beta_{\rm max}:=\max_{\ell\ge 1}\beta_\ell$, one can show that $a_n\sim f(1)\me \beta_{\rm max}^{-\theta}\mmp\{\sum_{\ell\ge1}\textrm{Geom}(1-\beta_\ell)=n\}$, where $\theta$ denotes a random variable distributed as $\textrm{Poiss}(\gamma)+\sum_{k\ge1}\textrm{Bern}(\frac{\alpha_k}{1+\alpha_k})$.

\textit{Totally positive} sequences, and particularly Theorem \ref{thm:totally_positive}, ``reincarnate'' in other areas of mathematics under new circumstances:
\begin{itemize}
	\item Theorem \ref{thm:totally_positive} was independently obtained 13 years later in the theory of representations of the infinite symmetric group in \cite{Thoma:1964};
	
	\item in \cite{Vershik+Kerov:1981}, Theorem \ref{thm:totally_positive} is reproved via the ergodic method, and a connection with the Young graph is established;
	
	\item invoking Theorem \ref{thm:totally_positive} an implicit classification for Schur-positive specializations was given in \cite{Kerov+Vershik:1986};
	
	\item a consequence of Theorem \ref{thm:totally_positive} is used in combinatorics in various guises,
	see, for instance, \cite{Brenti:1996} for an explanation of the fact that many polynomials arising in combinatorics are known (or conjectured) to have only real zeros.
\end{itemize}

\subsection{Connection to Laguerre-P\'olya classes}

This connection was pointed out by Alexander Marynych.
By Theorem \ref{thm:totally_positive}, totally positive sequences with \textit{entire} generating functions are precisely those whose corresponding random series lack the geometric component, or equivalently, those with generating functions of the form
$$
f(z)=\eee^{\gamma z}\prod_{k\ge1}(1+\alpha_k z),
$$
where $\gamma\ge0$,
$\alpha_k\ge0$ for all $k\in\mn$ and $\sum_{k\ge1}\alpha_k<\infty$.

These generating functions belong to the so-called \textit{Laguerre-P\'olya class of type I}, which consists of real entire functions $g$ admitting the representation
$$
g(z)=cz^n\eee^{\gamma z}\prod_{k\ge1}\Big(1+\frac{z}{x_k}\Big),
$$
where $c\in\mr$, $\gamma\ge0$, $n\in\mn_0$, $x_k>0$ for all $k\in\mn$ and $\sum_{k\ge1}x_k^{-1}<\infty$.

The Laguerre-P\'olya class of type I is of particular interest in complex analysis, since it is the class of local uniform limits of polynomials with only real and non-positive roots, see Theorem A in \cite{Nguyen+Vishnyakova:2021}.

\section{Our approach}\label{sect:approach}

\noindent {\bf A change of measure}.
Put $\eta_k:=\1_{A_k}$ for $k\in\mn$, so that $Y=\sum_{k\ge1} \eta_k$.
For each $s>0$, define a new probability measure $\mmp^{(s)}$ by
\begin{equation}\label{eq:choice_of_measure(g)}
	\me^{(s)}[g(\eta_1,\eta_2,\ldots,\eta_k)]=\frac{\me [\eee^{sY}g(\eta_1,\eta_2,\ldots,\eta_k)]}{\me [\eee^{sY}]}
\end{equation}
for all $k\in\mn$ and all bounded measurable functions $g:\mr^k\to\mathbb{C}$. Here, $\me^{(s)}$ denotes expectation with respect to $\mmp^{(s)}$. The change of measure only affects the marginal distributions of $\eta_1$, $\eta_2,\ldots$ In particular, these random variables are still independent under $\mmp^{(s)}$. Moreover, the variables $\eta_k$ remain Bernoulli distributed,
with the respective new success probabilities 
$\frac{r_k\eee^s}{r_k\eee^s+1-r_k}$.

For any measurable bounded function $f:\mr\to\mr$,  put $g(x_1,x_2,\ldots,x_k)=f(\sum_{i=1}^k x_i)$ for $(x_1,x_2,\ldots,x_k)\in\mr^k$. Using \eqref{eq:choice_of_measure(g)} with the so defined $g$ and sending $k\to\infty$ we arrive at
\begin{equation}\label{eq:choice_of_measure}
	\me^{(s)}[f(Y)]=\frac{\me [\eee^{sY}f(Y)]}{\me [\eee^{sY}]}.
\end{equation}
A specialization of \eqref{eq:choice_of_measure}, with any $n\in\mn$ fixed and $f(z):=\eee^{-sz}\1_{\{z=n\}}$, yields
\begin{equation}\label{eq:Y>x1}
	\mmp\{Y=n\}=\me [\eee^{s(Y-n)}]\mmp^{(s)}\{Y=n\}.
\end{equation}

\noindent {\bf A formula for $\mmp\{Y=n\}$}. Using \eqref{eq:choice_of_measure}
we infer
\begin{equation}\label{eq:me(s)Y}
	\me^{(s)}[Y]=\frac{\me [\eee^{sY}Y]}{\me [\eee^{sY}]}=
	\psi^\prime(s).
\end{equation}
An alternative way to derive \eqref{eq:me(s)Y} is to recall the series representation \eqref{eq:series} for $\psi^\prime$:
$$
\me^{(s)}[Y]=\sum_{k\ge1}\mmp^{(s)}(A_k)=\sum_{k\ge1}\frac{r_k\eee^s}{r_k\eee^s+1-r_k}=\psi^\prime(s).
$$
We shall use \eqref{eq:Y>x1} with $s=s_n$ as defined in Section \ref{sect:intro}, that is, the unique solution to $\me^{(s)}[Y]=n$, see \eqref{eq:me(s)Y}. Invoking $$\eee^{-s_n n}=\eee^{-s_n\me^{(s_n)}[Y]}=\eee^{-s_n\psi^\prime(s_n)}$$
we obtain a formula
\begin{equation}\label{eq:Y>x2}
	\mmp\{Y=n\}=\exp (\psi(s)-s\psi^\prime(s))
	\mmp^{(s)}\{Y=n\}
\end{equation}
with $s=s_n$, which is a starting point of the subsequent analysis.

\begin{rem}\label{rem}
	Typically, it is hard to find an inverse function for $s\mapsto\me^{(s)}[Y]$. Solving the equation $\me^{(s)}[Y]=n$, even approximately in $s$, is then a non-trivial task.
	We suggest to deal with this complication in the following way. We solve the equation $\me^{(s)}[Y]=n$ approximately, that is, we find a solution $s_n$ to $\me^{(s)}[Y]+o(1/s)=n$. Then
	$$
	\eee^{-s_n n}=\eee^{-s_n(\me^{(s_n)}[Y]+o(1/s_n))}\sim\eee^{-s_n\psi^\prime(s_n)},\quad n\to\infty.
	$$
	Hence, for $s=s_n$,
	\begin{equation}\label{eq:alt}
		\mmp\{Y=n\}\sim\exp (\psi(s)-s\psi^\prime(s))
		\mmp^{(s)}\{Y=n\},\quad n\to\infty.
	\end{equation}
\end{rem}

The classification presented in Theorems \ref{thm:caseB}, \ref{thm:caseA} and \ref{thm:caseC} is essentially determined by the asymptotic behavior of the variance $\var^{(s_n)}[Y]:= \me^{(s_n)}[Y^2]-(\me^{(s_n)}[Y])^2$.

\noindent {\bf The asymptotic behavior of $\var^{(s_n)}Y$} as $n\to\infty$. Put
$Y_0(s):=Y-\me^{(s)}[Y]$. For notational simplicity, we often write $Y_0$ in place of $Y_0(s)$. According to \eqref{eq:choice_of_measure} and \eqref{eq:me(s)Y},
$$
\me^{(s)}[Y^2]=\frac{\me [\eee^{sY}Y^2]}{\me[\eee^{sY}]}=\frac{(\me[\eee^{sY}])^{\prime\prime}}{\me[\eee^{sY}]}\quad\text{and}\quad \me^{(s)}[Y]=\frac{(\me[\eee^{sY}])^{\prime}}{\me[\eee^{sY}]}.
$$
Invoking \eqref{eq:series}, we conclude that 
\begin{multline}\label{eq:var_Y0}
	\me^{(s)}[Y_0^2]=\me^{(s)}[Y^2]-(\me^{(s)}[Y])^2
	=\frac{(\me[\eee^{sY}])^{\prime\prime}}{\me[\eee^{sY}]}-\Big(\frac{(\me[\eee^{sY}])^{\prime}}{\me[\eee^{sY}]}\Big)^2=\Big(\frac{(\me[\eee^{sY}])^{\prime}}{\me[\eee^{sY}]}\Big)^\prime\\=\psi^{\prime\prime}(s)=\sum_{k\ge 1}\frac{(1-r_k)r_k\eee^s}{(r_k\eee^s+1-r_k)^2}.
\end{multline}
Analogously to the computation of $\me^{(s)}[Y]$, formula
\eqref{eq:var_Y0} can be obtained directly from the independence of the variables $\eta_k$: 
$$\var^{(s)}[Y]=\sum_{k\ge1}\var^{(s)}[\eta_k]=\sum_{k\ge 1}\frac{r_k\eee^s}{r_k\eee^s+1-r_k}\Big(1-\frac{r_k\eee^s}{r_k\eee^s+1-r_k}\Big)=\psi^{\prime\prime}(s).
$$

\section{An extension to asymptotically equivalent probabilities}
As of now, the application scope of Theorem \ref{thm:main} is limited just to $r_k=ck^{-\beta}$ for $\beta>1$ or $r_k=c\eee^{-k^\beta}$ for $\beta>0$. We aim at broadening the obtained results for $(u_k)_{k\ge 1}$, where $u_k\sim r_k$ as $k\to\infty$. First, we state Theorem \ref{thm:equiv}, a general result. Then
we illustrate Theorem \ref{thm:equiv} with specific examples.

\begin{thm}\label{thm:equiv}
	Let $(u_k)_{k\ge 1}$ and $(r_k)_{k\ge 1}$ be the success probabilities satisfying
	$\sum_{k\ge 1}u_k<\infty$ and $\sum_{k\ge 1}r_k<\infty$. Let $Y_u$ and $Y_r$ be the corresponding infinite sums of independent indicators and $\psi_u$ and $\psi_r$ the corresponding counterparts of $\psi$.
	
	Assume that $u_k=r_k(1+\varepsilon_k)$ for $k\in\mn$, and $\sum_{k\ge 1}|\varepsilon_k|<\infty$. Denote by $s_n$ the solution to $\psi_r^\prime(s)=n$.
	Suppose that $(r_k)_{k\ge 1}$ satisfies the assumptions of Theorem \ref{thm:caseB} or
	\ref{thm:caseA} or \ref{thm:caseC} and additionally that
	$$
	\eee^{-s_n}\sum_{k=1}^{n}|\varepsilon_k|/r_k=o(1/s_n)\quad\text{and}\quad \eee^{s_n}\sum_{k\ge n+1}|\varepsilon_k|r_k=o(1/s_n),\quad n\to\infty.
	$$
	Then $(u_k)_{k\ge 1}$ also satisfies the assumptions of the same theorem and
	$$
	\mmp\{Y_u\ge n\}~\sim~\mmp\{Y_u=n\}~\sim~A
	\mmp\{Y_r=n\}~\sim~A
	\mmp\{Y_r\ge n\},\quad n\to\infty
	$$
	with a constant $A=
	\prod_{k\ge1}(1+\varepsilon_k)\in (0,\infty)$.
\end{thm}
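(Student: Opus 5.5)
The plan is to compare the two sums through the exponential change of measure of Section \ref{sect:approach}, using the \emph{same} tilting parameter $s_n$ (the solution of $\psi_r^\prime(s)=n$) for both. By \eqref{eq:Y>x1},
\[
\mmp\{Y_u=n\}=\exp\big(\psi_u(s_n)-s_nn\big)\,\mmp_u^{(s_n)}\{Y_u=n\},\qquad \mmp\{Y_r=n\}=\exp\big(\psi_r(s_n)-s_nn\big)\,\mmp_r^{(s_n)}\{Y_r=n\}.
\]
It therefore suffices to prove two things:
\[
\psi_u(s_n)-\psi_r(s_n)\to\log A \quad\text{and}\quad \mmp_u^{(s_n)}\{Y_u=n\}\sim\mmp_r^{(s_n)}\{Y_r=n\}.
\]
The tail statements $\mmp\{Y_u\ge n\}\sim\mmp\{Y_u=n\}$ and $\mmp\{Y_r\ge n\}\sim\mmp\{Y_r=n\}$ will then follow from the corresponding theorem applied to each sequence. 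This is why we also have to check that $(u_k)$ satisfies the hypotheses of the same theorem.

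\textbf{Step 1 (free energy).} Write
\[
\psi_u(s)-\psi_r(s)=\sum_k\log\Big(1+\frac{\varepsilon_kr_k(\eee^s-1)}{r_k\eee^s+1-r_k}\Big).
\]
For $k\ge n+1$ the inner term is bounded by $|\varepsilon_k|r_k\eee^{s_n}$. For $k\le n$ it equals $\varepsilon_k-\varepsilon_k(1-r_k)/(r_k\eee^{s_n}+1-r_k)$, with the error bounded by $|\varepsilon_k|\eee^{-s_n}/r_k$. Together with $\log(1+x)=x+O(x^2)$ and $\sum|\varepsilon_k|<\infty$, the hypotheses give
\[
\psi_u(s_n)-\psi_r(s_n)=\sum_{k\le n}\log(1+\varepsilon_k)+o(1/s_n)\to\log A.
\]
Since $u_k$ and the tilted probabilities must lie in $(0,1]$, the finitely many $k$ with large $|\varepsilon_k|$ need a separate remark, but they contribute $\log(1+\varepsilon_k)$ exactly in the limit. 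The same estimates show that the tilted means differ by
\[
\psi_u^\prime(s_n)-n=\sum_k\big(\pi_k^u-\pi_k^r\big)=o(1/s_n),
\]
where $\pi_k^{u}$ and $\pi_k^{r}$ are the tilted success probabilities. Indeed, $|\pi_k^u-\pi_k^r|$ is bounded by a constant times $|\varepsilon_k|\min(r_k\eee^{s_n},\eee^{-s_n}/r_k)$. This is precisely the situation of Remark \ref{rem}: $s_n$ is an approximate saddle point for $\psi_u$ as well, so nothing is lost by not re-solving $\psi_u^\prime(s)=n$. The same bound gives $\sum_k|\pi_k^u-\pi_k^r|\to0$, hence $\psi_u^{\prime\prime}(s_n)-\psi_r^{\prime\prime}(s_n)\to0$. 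It also gives that the quantities in conditions (a)--(c)/(d) of Theorems \ref{thm:caseB}, \ref{thm:caseA} and the limits $p_k,q_k$ with their dominating sequences in Theorem \ref{thm:caseC} are unchanged asymptotically, because $\varepsilon_{n\pm k}\to0$ and the relevant ratios get multiplied by $1+\varepsilon_k$. So $(u_k)$ falls into the same regime.

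\textbf{Step 2 (local probabilities under the tilt).} I would couple the tilted Bernoulli variables $\eta_k^u$ and $\eta_k^r$ maximally. Then
\[
\mmp\{\eta_k^u\ne\eta_k^r\}=|\pi_k^u-\pi_k^r|\quad\text{and}\quad \mmp\{Y_u\ne Y_r\}\le\sum_k|\pi_k^u-\pi_k^r|=:\delta_n.
\]
By the bound above, $\delta_n\le C\big(\eee^{-s_n}\sum_{k\le n}|\varepsilon_k|/r_k+\eee^{s_n}\sum_{k>n}|\varepsilon_k|r_k\big)=o(1/s_n)$, so
\[
\big|\mmp_u^{(s_n)}\{Y_u=n\}-\mmp_r^{(s_n)}\{Y_r=n\}\big|\le\delta_n .
\]
In the regimes of Theorems \ref{thm:caseA} and \ref{thm:caseC}, the proofs of those theorems show that $\mmp^{(s_n)}_r\{Y_r=n\}$ tends to $1$ or to $c_0>0$, respectively. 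Hence $\delta_n\to0$ suffices there.

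\textbf{Main obstacle.} In the regime of Theorem \ref{thm:caseB}, $\mmp^{(s_n)}_r\{Y_r=n\}\sim(2\pi\psi_r^{\prime\prime}(s_n))^{-1/2}\to0$. The coupling error must then be $o\big((\psi_r^{\prime\prime}(s_n))^{-1/2}\big)$, which $o(1/s_n)$ does not obviously give. My first attempt is to avoid coupling there entirely. Theorem \ref{thm:caseB} applied directly to $(u_k)$, together with Step 1 (via Remark \ref{rem}), yields
\[
\mmp\{Y_u=n\}\sim\exp\big(\psi_u(s_n)-s_nn\big)\big(2\pi\psi_u^{\prime\prime}(s_n)\big)^{-1/2}.
\]
The local CLT used in proving Theorem \ref{thm:caseB} is insensitive to an $o(1/s_n)$ shift of the mean, because the shift is $o(1)$ compared with a standard deviation tending to infinity. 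Then $\psi_u^{\prime\prime}(s_n)\sim\psi_r^{\prime\prime}(s_n)$ finishes the argument. I expect the main technical care to be in confirming that that local limit argument tolerates evaluating at the non-exact saddle point $s_n$.
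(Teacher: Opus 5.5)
Your proof is correct, and it is organized differently from the paper's. The paper goes theorem by theorem. It shows $\psi_u'(s_n)-n=o(1/s_n)$, so that $s_n$ is an approximate saddle point for $\psi_u$ in the sense of Remark \ref{rem}. It then checks that $(u_k)$ satisfies the hypotheses of the same theorem ($\psi_u''-\psi_r''=O(1)$ uniformly in $s$; condition (d) of Theorem \ref{thm:caseA}; conditions (a)--(c) of Theorem \ref{thm:caseC} with the same $p_k,q_k$, hence the same $c_0$), and invokes that theorem for $(u_k)$ at $s_n$. Finally it gets $\psi_u(s)-\psi_r(s)\to\log A$ for all $s\to\infty$ by dominated convergence with majorant $2|\varepsilon_k|$. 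That is simpler than your split at $k=n$ and uses none of the extra hypotheses. In your split, the $k\le n$ term is $\varepsilon_k-\varepsilon_k/(r_k\eee^{s_n}+1-r_k)$; the factor $1-r_k$ is a harmless slip.

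You instead tilt both sums by the same $s_n$, so $\eee^{-s_nn}$ cancels exactly in \eqref{eq:Y>x1}, and everything reduces to comparing two tilted local probabilities. In the regimes of Theorems \ref{thm:caseA} and \ref{thm:caseC}, the maximal coupling settles this in total variation without rerunning those theorems for $(u_k)$, and in particular without re-identifying $c_0$. In the regime of Theorem \ref{thm:caseB}, you rightly observe that a total-variation bound cannot beat $(\psi_r''(s_n))^{-1/2}$ (of order $\eee^{-s_n/(2\beta)}$ when $r_k=ck^{-\beta}$), and you fall back on the paper's route. The point you flag is immediate: the mean shift only multiplies the integrand of \eqref{eq:dom_int} (written for $Y_u$) by $\exp(\ii u(\psi_u'(s_n)-n)(\psi_u''(s_n))^{-1/2})$. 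This factor has modulus one and tends to $1$, so the majorant from Lemma \ref{lem:module} and the limit are unchanged.

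Your route also buys more than you claim: it never uses the rate $o(1/s_n)$. Indeed $|\pi_k^u-\pi_k^r|\le|\varepsilon_k|$ for $s\ge0$. So in the regime of Theorem \ref{thm:caseB} the shift is at most $\sum_k|\varepsilon_k|=o((\psi_u''(s_n))^{1/2})$. In the other two regimes you need only $\delta_n\to0$. For Theorem \ref{thm:caseA} this follows from conditions (c), (d); for Theorem \ref{thm:caseC} it follows from the majorants in (a), (b), together with $\varepsilon_k\to0$ and dominated convergence. Summing $\mmp\{Y_u=m\}\sim A\,\mmp\{Y_r=m\}$ over $m\ge n$ then gives the tail statements. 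Hence your argument proves the asymptotic equivalences assuming only $\sum_k|\varepsilon_k|<\infty$; in the paper, the extra hypotheses are used only in Step 1, to make $s_n$ an approximate saddle point for $\psi_u$.
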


\noindent{\bf Application 1.} Recovering the result of Example \ref{ex:gnedin}.

Put $r_k=\lambda^2(\pi k)^{-2}$ and $u_k=\lambda^2 ((\pi k)^2+\lambda^2)^{-1}$ for $k\in\mn$. Then $\varepsilon_k=-\lambda^2((\pi k)^2+\lambda^2)^{-1}$ and $\sum_{k\ge 1}|\varepsilon_k|<\infty$. From equation \eqref{eq:s} (with $\beta=2$) in the proof of Theorem \ref{thm:main}(a) below it follows that $s_n=2\log \big(n+\frac{1}{2}\big)+2\log\big(\frac{2}{\lambda}\big)+o(1)$ as $n\to\infty$. With this at hand, we check that, as $n\to\infty$,
$$
\eee^{-s_n}\sum_{k=1}^{n}|\varepsilon_k|/r_k=\eee^{-s_n}\sum_{k=1}^{n}\frac{(\pi k)^2}{(\pi k)^2+\lambda^2}\le n\eee^{-s_n}=O(n^{-1})
=o((\log n)^{-1})
$$
and
$$
\eee^{s_n}\sum_{k\ge n+1}|\varepsilon_k|r_k\le\eee^{s_n}\sum_{k\ge n+1}\lambda^4(\pi k)^{-4}=\eee^{s_n} O(n^{-3})=
O(n^{-1})=o((\log n)^{-1}).
$$
Therefore, all the conditions of Theorem \ref{thm:equiv} are satisfied. To calculate the constant $A$, write
$$
A=\prod_{k\ge 1}\Big(1-\frac{\lambda^2}{(\pi k)^2+\lambda^2}\Big)=\frac{1}{\prod_{k\ge 1}(1+\lambda^2(\pi k)^{-2})}=\frac{\lambda}{\sinh (\lambda)}.
$$
Applying Theorem \ref{thm:equiv} together with the case $\beta=2$ of Theorem \ref{thm:main}(a) we obtain
$$
\mmp\{Z\ge n\}\sim\mmp\{Z=n\}\sim\frac{\lambda}{4\pi^{1/2}\sinh (\lambda)}\exp(-2n\log (2n)+2n)\lambda^{2n}n^{-3/2}
$$
as $n\to\infty$, which is in full agreement with \eqref{eq:gnedin}.

\noindent{\bf Application 2.} The range of Poissonized samples.

Consider the setting of Example \ref{ex:range} with $p_k=c\eee^{-k^\beta}$ for $k\in\mn$ and $\beta>1$. Fix any $t>0$. The variable ${\rm Ran}(\xi, \pi(t))$ is a version of $Y$ with $u_k=1-\eee^{-tp_k}$ for $k\in\mn$.
Put
$r_k=tp_k=tc\eee^{-k^\beta}$ for $k\in\mn$. Then $\sum_{k\ge 1}r_k<\infty$.
Further, in view of $$|\varepsilon_k|=\Big|\frac{u_k-r_k}{r_k}\Big|=\frac{tp_k-1+\eee^{-tp_k}}{tp_k}\leq \frac{tp_k}{2},$$ we infer $\sum_{k\ge1}|\varepsilon_k|\leq t/2$.
According to the proof of Theorem \ref{thm:main}(d) given in Section \ref{sect:proof d} below, $s_n=n^\beta+n^{(\beta-1)/2}$ for large $n$. With this at hand we obtain, as $n\to\infty$,
$$
\eee^{-s_n}\sum_{k=1}^{n}|\varepsilon_k|/r_k\le n\eee^{-s_n}/2=o(n^{-\beta})
$$
and
\begin{multline*}
	\eee^{s_n}\sum_{k\ge n+1}|\varepsilon_k|r_k\le 2^{-1}t^2\eee^{s_n}\sum_{k\geq n+1}p_k^2= 2^{-1} (tc)^2\eee^{s_n}\sum_{k\ge n+1}\eee^{-2k^\beta}\\\le 2^{-1}(tc)^2\eee^{s_n}\int_n^\infty\eee^{-2x^\beta}\dd x=(tc)^2\beta^{-1}2^{-1/\beta-1}\eee^{s_n}\int_{2n^\beta}^\infty\eee^{-y}y^{1/\beta-1}\dd y
	\\\sim 2^{-2}(tc)^2\beta^{-1}\eee^{s_n}\eee^{-2n^\beta}n^{1-\beta}=o(n^{-\beta}).
\end{multline*}
Here, for the last equivalence we have used
$\int_x^\infty y^{a-1}\eee^{-y}\dd y\sim x^{a-1}\eee^{-x}$ as $x\to\infty$ for $a>0$.

Summarizing, the conditions of Theorem \ref{thm:equiv} are satisfied. Hence,
the asymptotic relations stated in Theorem \ref{thm:main}(d) hold true for ${\rm Ran}(\xi, \pi(t))$ with $p_k=c\eee^{-k^\beta}$, up to the multiplicative constant $A=\prod_{k\ge1}(1+\varepsilon_k)=\prod_{k\ge1}\frac{1-\eee^{-tp_k}}{tp_k}$.

\section{Proofs for Section \ref{sec:classif}}

\subsection{Auxiliary results}\label{sec:aux}
Since $s_n$ is only defined 
for $n \ge \lfloor\psi^\prime(0)\rfloor + 1$, we tacitly assume that $n$ is large enough whenever needed.
First, we prove several inequalities for a general sequence $(r_k)_{k\geq 1}$. Recall that we assume $r_k>0$ for all $k\in\mn$ throughout the paper.

Moreover, by our earlier convention, the sequence $(r_k)_{k\geq 1}$ is nonincreasing. 
Recalling that $\lim_{n\to\infty}s_n=\infty$, we fix large enough $n$ satisfying
$r_1\eee^{s_n}\ge1$. Since $s_n<\infty$ and $r_k$ tends to $0$ monotonically,
there exists positive integer $K_n$ such that $r_k\eee^{s_n}\ge1$ for all $k\le K_n$ and $r_k\eee^{s_n}<1$ for all $k\ge K_n+1$.
\begin{lemma}\label{lem:ineq}
	The following inequalities hold:
	\begin{enumerate}[(a)]
		\item $\displaystyle \psi^{\prime\prime}(s_n)\le 2\sum_{k\ge n+1}\frac{r_k\eee^{s_n}}{r_k\eee^{s_n}+1-r_k}$;
		
		\item $\displaystyle
		\psi^{\prime\prime}(s_n)\ge\frac{1}{3}\sum_{k\ge n+1}\frac{r_k\eee^{s_n}}{r_k\eee^{s_n}+1-r_k}$ for $n$ so large that $r_k\le 1/2$ for all $k\ge n+1$;
		
		\item $\displaystyle
		\sum_{k=1}^n\frac{1-r_k}{r_k\eee^{s_n}+1-r_k}\le
		\sum_{k=1}^nr_k^{-1}\eee^{-s_n}$;
		
		\item $\displaystyle\sum_{k\ge n+1}\frac{r_k\eee^{s_n}}{r_k\eee^{s_n}+1-r_k}\le\sum_{k\ge n+1}r_k\eee^{s_n}$.
	\end{enumerate}
\end{lemma}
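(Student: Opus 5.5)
Write $x_k:=r_k\eee^{s_n}$, so that the $k$th term of $\psi^{\prime\prime}(s_n)$ is $\frac{(1-r_k)x_k}{(x_k+1-r_k)^2}$. This term factors in two ways:
$$
\frac{(1-r_k)x_k}{(x_k+1-r_k)^2}=\frac{x_k}{x_k+1-r_k}\cdot\frac{1-r_k}{x_k+1-r_k}.
$$
The plan is to bound each factor trivially and then invoke the balance identity \eqref{eq:core}. All four parts are elementary termwise inequalities, used either alone or combined with \eqref{eq:core}.

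\emph{Part (a).} Split $\psi^{\prime\prime}(s_n)$ into the sums over $k\le n$ and over $k\ge n+1$.
For $k\le n$, bound the factor $\frac{x_k}{x_k+1-r_k}$ by $1$. The head sum is then at most $\sum_{k=1}^n\frac{1-r_k}{x_k+1-r_k}$, which by \eqref{eq:core} equals $\sum_{k\ge n+1}\frac{x_k}{x_k+1-r_k}$.
For $k\ge n+1$, bound the factor $\frac{1-r_k}{x_k+1-r_k}$ by $1$. The tail sum is then at most the same quantity $\sum_{k\ge n+1}\frac{x_k}{x_k+1-r_k}$.
Adding the two bounds gives the factor $2$.

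\emph{Part (b).} Discard the head terms, since they are nonnegative. It remains to show that, for $k\ge n+1$, the factor $\frac{1-r_k}{x_k+1-r_k}$ is at least $1/3$.

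This is the only step needing care, because it requires an upper bound on $x_k$ for $k\ge n+1$. I would argue by monotonicity. Every summand of $\psi^\prime(s_n)$ is at most $1$ and $\psi^\prime(s_n)=n$. If we had $x_{n+1}\ge 1$, then $x_k\ge1$ for all $k\le n+1$, since $(r_k)$ is nonincreasing. Each of those $n+1$ summands $\frac{x_k}{x_k+1-r_k}$ would be at least $\frac{1}{2-r_k}\ge\frac12$. That alone does not contradict $\psi^\prime(s_n)=n$, so this crude argument gives no contradiction.

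I would therefore use a cleaner route: show $x_k\le 1$ for all $k\ge n+1$. With this bound and $r_k\le 1/2$,
$$
\frac{1-r_k}{x_k+1-r_k}\ge\frac{1/2}{1+1/2}=\frac13.
$$
To prove $x_{n+1}\le 1$ (and hence $x_k\le1$ for all $k\ge n+1$ by monotonicity), suppose instead $x_{n+1}>1$. Then $\frac{x_k}{x_k+1-r_k}>\frac{1}{2-r_k}$ for $k\le n+1$, and I expect a comparison against \eqref{eq:core} to produce a contradiction. If this fails, I would instead split the tail at $K_n$: for the finitely many $k\in(n,K_n]$ use $\frac{1-r_k}{x_k+1-r_k}\ge\frac{1-r_k}{2x_k}$, and handle $k>K_n$ with $x_k<1$ as above. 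This verification is the main obstacle.

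\emph{Parts (c) and (d).} These are termwise.
For (c), use $\frac{1-r_k}{x_k+1-r_k}\le\frac{1-r_k}{x_k}\le x_k^{-1}=r_k^{-1}\eee^{-s_n}$.
For (d), use $x_k+1-r_k\ge1$, so that $\frac{x_k}{x_k+1-r_k}\le x_k$.
Summing over the respective ranges gives both inequalities.
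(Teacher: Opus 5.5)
Your arguments for (a), (c) and (d) are correct. They use the same termwise bounds, combined with \eqref{eq:core}, that the paper uses.

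Part (b), however, has a genuine gap. Your main route rests on the claim that $x_{n+1}=r_{n+1}\eee^{s_n}\le 1$ for all large $n$. This is false in general, and no termwise lower bound on the tail alone can yield (b). For example, let $(r_k)$ be constant on consecutive blocks of $N\ge 4$ indices, with value $p_j=2^{-2^j}$ on the $j$th block, and let $n+1$ be the last index of the $j$th block. Since $p_{j+1}/p_j\to 0$ very fast, the equation $\psi^\prime(s_n)=n$ forces two things: the earlier blocks contribute almost $1$ per index, and the later blocks contribute almost nothing. Hence $N x_{n+1}/(x_{n+1}+1-p_j)\to N-1$, that is, $x_{n+1}\to N-1>1$. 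The part of $\psi^{\prime\prime}(s_n)$ coming from $k\ge n+1$ then tends to $(N-1)/N^2$. Meanwhile $\frac13\sum_{k\ge n+1}\frac{x_k}{x_k+1-r_k}\to\frac{N-1}{3N}$, which is strictly larger. Your fallback fails for the same reason: on $(n,K_n]$ the bound $\frac{1-r_k}{x_k+1-r_k}\ge\frac{1-r_k}{2x_k}$ is of order $1/x_k$, so it is not bounded below by $1/3$.

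The missing idea is that you must not discard the head terms. Split according to whether $x_{n+1}<1$ or $x_{n+1}\ge 1$; these are the paper's cases $K_n\le n$ and $K_n>n$.
\begin{itemize}
\item In the first case your tail argument is correct: $x_k<1$ and $r_k\le 1/2$ give $\frac{1-r_k}{x_k+1-r_k}\ge\frac{1-r_k}{2-r_k}\ge\frac13$ for every $k\ge n+1$.
\item In the second case, monotonicity gives $x_k\ge 1$ for all $k\le n$, hence $\frac{x_k}{x_k+1-r_k}\ge\frac12$. This is precisely the bound you wrote down while looking for a contradiction. It gives $\psi^{\prime\prime}(s_n)\ge\frac12\sum_{k=1}^n\frac{1-r_k}{x_k+1-r_k}=\frac12\sum_{k\ge n+1}\frac{x_k}{x_k+1-r_k}$ by \eqref{eq:core}, which is stronger than needed.
\end{itemize}
So that observation should be applied to the head of $\psi^{\prime\prime}(s_n)$ and combined with \eqref{eq:core}, not used to seek a contradiction.
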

\begin{proof}
	(a) follows from \eqref{eq:core} and
	$$
	\psi^{\prime\prime}(s_n)=\sum_{k\ge1}\frac{(1-r_k)r_k\eee^{s_n}}{(r_k\eee^{s_n}+1-r_k)^2}\le \sum_{k=1}^n\frac{1-r_k}{r_k\eee^{s_n}+1-r_k}+\sum_{k\ge n+1}\frac{r_k\eee^{s_n}}{r_k\eee^{s_n}+1-r_k}.$$
	
	\noindent (b) Fix $n\in\mn$ so large that $r_k\leq 1/2$ for all $k\geq n+1$. Assume first that $K_n\le n$. Then $r_k\eee^{s_n}<1$ for all $k\ge n+1$. As a consequence,
	\[
	\psi^{\prime\prime}(s_n)\ge\sum_{k\ge n+1}\frac{(1-r_k)r_k\eee^{s_n}}{(r_k\eee^{s_n}+1-r_k)^2}\ge \frac{1}{3}\sum_{k\ge n+1}\frac{r_k\eee^{s_n}}{r_k\eee^{s_n}+1-r_k}.
	\]
	Let now $K_n>n$. Then $r_k\eee^{s_n}\ge 1$ for all $k\le n$ and thereupon
	\[
	\psi^{\prime\prime}(s_n)\ge\sum_{k=1}^n\frac{(1-r_k)r_k\eee^{s_n}}{(r_k\eee^{s_n}+1-r_k)^2}\ge \frac{1}{2}\sum_{k=1}^n\frac{1-r_k}{r_k\eee^{s_n}+1-r_k}
	\]
	which completes the proof of (b) by \eqref{eq:core}.
	
	\noindent (c) follows from $$\sum_{k=1}^n\frac{1-r_k}{r_k\eee^{s_n}+1-r_k}=\sum_{k=1}^n\frac{(r_k^{-1}-1)\eee^{-s_n}}{1+(r_k^{-1}-1)\eee^{-s_n}}\le \sum_{k=1}^nr_k^{-1}\eee^{-s_n}.$$
	\noindent (d) is trivial.
\end{proof}

Recall the notation $\eta_k=\1_{A_k}$ for $k\in\mn$. To prove Theorem \ref{thm:caseB}, we need the following lemma.
\begin{lemma}\label{lem:module}
	For $k\in\mn$, $s>0$ and $|u|\le \pi$
	\begin{equation}\label{eq:module}
		|\me^{(s)}[\eee^{\ii u\eta_k}]|\le
		\exp\Big(-\frac{2u^2}{\pi^2}\frac{(1-r_k)r_k\eee^s}{
			(r_k\eee^s+1-r_k)^2}\Big)=\exp\Big(-\frac{2u^2}{\pi^2}\psi_k^{\prime\prime}(s)\Big).
	\end{equation}
\end{lemma}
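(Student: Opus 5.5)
Under $\mmp^{(s)}$ the variable $\eta_k$ is Bernoulli with success probability $p:=\frac{r_k\eee^s}{r_k\eee^s+1-r_k}\in(0,1)$. Indeed, $r_k<1$ may be assumed; if $r_k=1$, then $p=1$, the variance term vanishes, and the bound is trivial. Hence
\[
\me^{(s)}[\eee^{\ii u\eta_k}]=1-p+p\eee^{\ii u},
\]
and $p(1-p)=\frac{(1-r_k)r_k\eee^s}{(r_k\eee^s+1-r_k)^2}=\psi_k^{\prime\prime}(s)$. The last equality is exactly the $k$th summand in \eqref{eq:series}, since $\psi_k^{\prime\prime}$ is computed termwise. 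So the claim reduces to an elementary inequality about a two-point characteristic function:
\[
|1-p+p\eee^{\ii u}|\le \exp\big(-2\pi^{-2}u^2p(1-p)\big),\qquad |u|\le\pi,\ p\in[0,1].
\]

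First I compute the modulus directly:
\[
|1-p+p\eee^{\ii u}|^2=(1-p)^2+p^2+2p(1-p)\cos u=1-2p(1-p)(1-\cos u)=1-4p(1-p)\sin^2(u/2).
\]
Next I use Jordan's inequality $|\sin x|\ge 2|x|/\pi$ for $|x|\le\pi/2$. With $x=u/2$, it gives $\sin^2(u/2)\ge u^2/\pi^2$ for $|u|\le\pi$, so
\[
|1-p+p\eee^{\ii u}|^2\le 1-\frac{4u^2}{\pi^2}p(1-p).
\]
The right-hand side lies in $[0,1]$ because $p(1-p)\le1/4$ and $u^2/\pi^2\le1$. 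Finally, $1-x\le \eee^{-x}$ gives $|\cdot|^2\le \exp(-4u^2p(1-p)/\pi^2)$, and taking square roots yields the exponent $-2u^2p(1-p)/\pi^2$, which is \eqref{eq:module}.

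There is no real obstacle. The only points needing care are getting the constant right, since Jordan's inequality is exactly what produces the factor $2/\pi^2$ after the square root, and checking that the expression under the square root stays nonnegative, which the bound $p(1-p)\le 1/4$ ensures.
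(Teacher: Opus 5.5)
Your proposal is correct and follows essentially the same route as the paper: both compute $|1-p+p\eee^{\ii u}|^2=1-2p(1-p)(1-\cos u)$, bound $1-\cos u\ge 2u^2/\pi^2$ on $|u|\le\pi$ (your Jordan inequality $\sin^2(u/2)\ge u^2/\pi^2$ is the same estimate), and apply $1-x\le \eee^{-x}$ before taking square roots. The only difference is the order of the last two steps, which is immaterial. Your case split for $r_k=1$ and the nonnegativity check are harmless but unnecessary, since the two-point bound holds for all $p\in[0,1]$.
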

\begin{proof}
	We thank the referee for suggesting the following short proof.
	
	Recall that under $\mmp^{(s)}$, the variable $\eta_k$ has the Bernoulli distribution with the success probability $r_k\eee^s/(r_k\eee^s+1-r_k)$. 
	More generally, let $\eta = {\rm Bern}(p)$, where $p\in(0,1)$, and denote its variance by $\sigma^2:=\var[\eta]=p(1-p)$. Then invoking $(1-\cos u)/u^2\ge (1-\cos \pi)/\pi^2=2/\pi^2$ for $|u|\le\pi$, we obtain
	\begin{multline*}
		|\me[\eee^{\ii u\eta}]|^2=|1-p+p\eee^{\ii u}|^2=(1-p)^2+p^2+2p(1-p)\cos u=1-2\sigma^2(1-\cos u)\\
		\le \eee^{-2\sigma^2(1-\cos u)}\le \eee^{-\frac{4}{\pi^2}\sigma^2u^2}.
	\end{multline*}
	Applying this bound to $\eta_k$ under $\mmp^{(s)}$, we arrive at \eqref{eq:module}.
\end{proof}

To prove Theorem \ref{thm:caseC}, we need the following inequality.
\begin{lemma}\label{lem:log_ineq}
	For any $z\in(-\pi, \pi)$ and $w\in[0,1]$,
	$$
	|\log(1+(\eee^{\ii z}-1)w)|\le 2w|\tan (z/2)|,
	$$
	where $\log$ denotes the principal branch of the logarithm.
\end{lemma}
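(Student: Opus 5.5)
We want $|\log(1+(\eee^{\ii z}-1)w)|\le 2w|\tan(z/2)|$ for $z\in(-\pi,\pi)$ and $w\in[0,1]$. The plan is to write the logarithm as an integral along the segment in $w$ and to bound the integrand uniformly. Put $h(t):=\log(1+(\eee^{\ii z}-1)t)$ for $t\in[0,w]$. The point $1+(\eee^{\ii z}-1)t=(1-t)+t\eee^{\ii z}$ is a convex combination of $1$ and $\eee^{\ii z}$. Since $|z|<\pi$, the segment joining $1$ and $\eee^{\ii z}$ stays in the open right half-plane or at least avoids $(-\infty,0]$. Its real part is $1-t(1-\cos z)$, and this is positive whenever $\cos z>-1$ unless the imaginary part is nonzero. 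So the principal branch is holomorphic along the path and $h(0)=0$. Consequently
$$
h(w)=\int_0^w \frac{\eee^{\ii z}-1}{1+(\eee^{\ii z}-1)t}\,\dd t,\qquad |h(w)|\le w\sup_{t\in[0,1]}\frac{|\eee^{\ii z}-1|}{|1+(\eee^{\ii z}-1)t|}.
$$

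The key step is to bound the denominator from below by the distance from the origin to the chord joining $1$ and $\eee^{\ii z}$. Since the chord is a segment between two points on the unit circle, its closest point to $0$ is its midpoint (the foot of the perpendicular lies in the segment by symmetry). This gives
$$
|(1-t)+t\eee^{\ii z}|\ge |\tfrac{1+\eee^{\ii z}}{2}|=\cos(z/2).
$$
For the numerator we use $|\eee^{\ii z}-1|=2|\sin(z/2)|$. The ratio is therefore at most $2|\sin(z/2)|/\cos(z/2)=2|\tan(z/2)|$, and this gives the claim.

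The only point needing care is the justification that the principal logarithm of the path equals the integral of its derivative, i.e. that the path avoids the branch cut. This follows from the same lower bound: $|\cdot|\ge\cos(z/2)>0$ holds along the path. Moreover, the argument of a convex combination of $1$ and $\eee^{\ii z}$ lies between $0$ and $z$, so it lies in $(-\pi,\pi)$ and the path never meets $(-\infty,0]$. No other step should present difficulty.
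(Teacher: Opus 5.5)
Your proposal is correct and is essentially the paper's proof: you write the principal logarithm as the integral of its derivative along the segment from $1$ to $1+(\eee^{\ii z}-1)w$, then bound the numerator by $2w|\sin(z/2)|$ and the denominator from below by $\cos(z/2)$. The only difference is cosmetic: you get the lower bound geometrically (the point of the chord from $1$ to $\eee^{\ii z}$ closest to the origin is its midpoint), whereas the paper uses the identity $|1+ux|^2=1-4xw(1-xw)\sin^2(z/2)\ge\cos^2(z/2)$ with $u=(\eee^{\ii z}-1)w$ and $x\in[0,1]$; also, your aside that the segment stays in the right half-plane fails for $|z|>\pi/2$, but your closing argument (modulus at least $\cos(z/2)>0$ and argument between $0$ and $z$) handles the branch cut correctly.
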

\begin{proof}
	Let $u:=(\eee^{\ii z}-1)w$. First, note that
	$$
	|u|^2=2w^2(1-\cos z)=4w^2\sin^2(z/2).
	$$
	Furthermore, for any $x\in[0,1]$, we have
	$$
	|1+ux|^2=1-2xw(1-xw)(1-\cos z)=1-4xw(1-xw)\sin^2(z/2)\ge \cos^2(z/2),
	$$
	since $p(1-p)\le 1/4$ for all $p\in[0,1]$. With this at hand, we arrive at the desired estimate
	$$
	|\log(1+u)|=\Big|\int_0^1\frac{u}{1+ux}\dd x\Big|\le \int_0^1\frac{|u|}{|1+ux|}\dd x\le \frac{2w|\sin(z/2)|}{|\cos(z/2)|}.
	$$
	
	Observe that the segment $\{1+ux:~x\in[0,1]\}$ does not intersect $(-\infty,0]$, so that the principal logarithm is analytic along this segment.
\end{proof}

\subsection{Proofs of Theorems \ref{thm:caseB}, \ref{thm:caseA} and \ref{thm:caseC}}

Unless stated otherwise, all the limits below are assumed to be taken as $n\to\infty$.

\begin{bluebar}
	\begin{proof}[Proof of Theorem \ref{thm:caseB}]
		
		\noindent (a)$\Rightarrow $(b) follows from Lemma \ref{lem:ineq}(b).
		
		\noindent (b)$\Rightarrow $(a) follows from Lemma \ref{lem:ineq}(a).
		
		\noindent (a)$\Rightarrow $(c) follows from Lemma \ref{lem:ineq}(c,d).
		
		\noindent (c)$\Rightarrow $(a).
		Fix $n\in\mn$ satisfying $n\geq k_1:=\min\{k\in\mn: r_k<1\}$. We use $K_n$ as defined in Section \ref{sec:aux}. Assume first that $K_n\le n$. Then $r_k\eee^{s_n}<1$ for  all $k\geq n+1$. As a consequence,
		\[
		\sum_{k\ge n+1}\frac{r_k\eee^{s_n}}{r_k\eee^{s_n}+1-r_k}\ge\frac{1}{2}\sum_{k\ge n+1}r_k\eee^{s_n}.
		\]
		Let now $K_n>n$. Then $r_k\eee^{s_n}\ge 1$ for all $k\le n$ and thereupon
		$$
			\sum_{k=1}^n\frac{1-r_k}{r_k\eee^{s_n}+1-r_k}=\sum_{k=1}^n\frac{(r_k^{-1}-1)\eee^{-s_n}}{1+(r_k^{-1}-1)\eee^{-s_n}}\ge\frac{1}{2}\sum_{k=1}^n(r_k^{-1}-1)\eee^{-s_n}\ge\frac{1-r_{k_1}}{2}\sum_{k=k_1}^nr_k^{-1}\eee^{-s_n}.
		$$
		
		Combining both inequalities together we obtain
		$$
			\sum_{k=1}^n\frac{1-r_k}{r_k\eee^{s_n}+1-r_k}\!=\!\sum_{k\ge n+1}\frac{r_k\eee^{s_n}}{r_k\eee^{s_n}+1-r_k}\!\ge\!\frac{1-r_{k_1}}{2
			}\min\Big\{\sum_{k\ge n+1}r_k\eee^{s_n},\sum_{k=k_1}^nr_k^{-1}\eee^{-s_n}\Big\}\!\to\!\infty.
		$$
		
		\noindent\textbf{Proof of $\displaystyle\mmp\{Y\ge n\}\sim\mmp\{Y=n\}\sim\frac{\exp(\psi(s_n)-s_n\psi^\prime(s_n))}{(2\pi \psi^{\prime\prime}(s_n))^{1/2}}$.}
		In view of~\eqref{eq:Y>x2}, the second asymptotic relation follows if we can show that
		\begin{equation}\label{eq:Ps_n}
			\mmp^{(s_n)}\{Y=n\}~\sim~\frac{1}{(2\pi \psi^{\prime\prime}(s_n))^{1/2}
			}.
		\end{equation}
		By the Stolz–Ces\`{a}ro theorem, $\mmp\{Y=n\}\sim \mmp\{Y\ge n\}$ follows if we can prove that
		\begin{equation}\label{eq:inter1}
			\lim_{n\to\infty}\frac{\mmp\{Y=n+1\}}{\mmp\{Y=n\}}=0.
		\end{equation}
		Fix any $j\in\mn_0$. A counterpart of \eqref{eq:Y>x1}, with $n+j$ replacing $n$, reads
		\begin{equation}\label{eq:Y=n+j}
			\mmp\{Y=n+j\}=\eee^{-js_n}\me [\eee^{s_n(Y-n)}]\mmp^{(s_n)}\{Y=n+j\}.
		\end{equation}
		Since $\lim_{n\to\infty}s_n=+\infty$, both \eqref{eq:Ps_n} and \eqref{eq:inter1} are secured by
		\begin{equation}\label{eq:Ps_n2}
			\mmp^{(s_n)}\{Y=n+j\}~\sim~\frac{1}{(2\pi \psi^{\prime\prime}(s_n))^{1/2}},
		\end{equation}
		with $j=0$ and $j=1$.
		
		\noindent {\sc Proof of \eqref{eq:Ps_n2}.} For convenience, we write $s$ in place of $s_n$.
		In view of
		$$
		Y_0=Y_0(s)=Y-\me^{(s)}[Y]=Y-n=\sum_{k\ge 1}(\1_{A_k}-\mmp^{(s)}(A_k)),
		$$
		the variable $Y_0$ under $\mmp^{(s)}$ is an infinite sum of independent bounded random variables with zero means. Hence, the relation
		\begin{equation}\label{eq:clt}
			\lim_{n\to\infty} \me^{(s_n)}
			[\exp(\ii u (\psi^{\prime\prime}(s_n)
			)^{-1/2}(Y_0(s_n)-j))]=\eee^{- u^2/2},\quad u\in\mr
		\end{equation}
		is an immediate consequence of the Lindeberg-Feller theorem and $\lim_{n\to\infty}\psi^{\prime\prime}(s_n)=\infty$, see, for instance, Theorem~3.4.10 on p.148 in \cite{Durrett:2019}.
		
		For any $n\geq \lfloor \psi^\prime(0)\rfloor+1
		$,
		$$
			\mmp^{(s)}\{Y=n+j\}=\frac{1}{2\pi}\int_{|z|\leq \pi}\eee^{-\ii z(n+j)} \me^{(s)}[\eee^{\ii zY}]\dd z=\frac{1}{2\pi}\int_{|z|\leq \pi}\me^{(s)}[\eee^{\ii z(Y_0-j)}]\dd z.
		$$
		Changing the variable $u=z (\psi^{\prime\prime}(s))^{1/2}$ yields
		\begin{multline}\label{eq:dom_int}
			\mmp^{(s)}\{Y=n+j\}=\frac{1}{2\pi (\psi^{\prime\prime}(s))^{1/2}}\int_{|u|\leq \pi (\psi^{\prime\prime}(s))^{1/2}} \me^{(s)}[\exp(\ii u(\psi^{\prime\prime}(s))^{-1/2}(Y_0-j))]\dd u\\=
			\frac{1}{2\pi (\psi^{\prime\prime}(s))^{1/2}}\int_\mr \me^{(s)}[\exp(\ii u(\psi^{\prime\prime}(s))^{-1/2}(Y_0-j))]\1_{|u|\leq \pi (\psi^{\prime\prime}(s))^{1/2}}\dd u
			.
		\end{multline}
		To derive \eqref{eq:Ps_n2} from~\eqref{eq:dom_int}, we apply the dominated convergence theorem.
		To this end, first note that by Lemma~\ref{lem:module}, for $u$ satisfying $|u|\le \pi (\psi^{\prime\prime}(s))^{1/2}$,
		\begin{multline*}
			|\me^{(s)}[\exp(\ii u(\psi^{\prime\prime}(s))^{-1/2}(Y_0-j))]|=\prod_{k\geq 1}|\me^{(s)}[\exp(\ii u(\psi^{\prime\prime}(s) )^{-1/2}\eta_k)]|\\\le 
			\exp\Big(-\frac{2u^2}{\pi^2
				\psi^{\prime\prime}(s)}\sum_{k\ge 1} \psi_k^{\prime\prime}(s)\Big)=
			\exp\Big(-\frac{2u^2}{\pi^2
			}\Big).
		\end{multline*}
		Therefore, $\exp(-2u^2/\pi^2)$ is a sought integrable majorant for the last integral in \eqref{eq:dom_int}. The pointwise limit of the integrand is described in \eqref{eq:clt}. To arrive at \eqref{eq:Ps_n2}, it remains to note that $\int_\mr \eee^{-u^2/2}\dd u=\sqrt{2\pi}$.
	\end{proof}
	
	\noindent {\sc Alternative proof of \eqref{eq:Ps_n2}.} For all sufficiently large $n$, put $a_n(k):=\mmp^{(s_n)}\{Y=k\}$ for $k\in\mn_0$. We first show that the sequence $(a_n(k))_{k\ge0}$ is \textit{log-concave}.
	
	If $r_k=1$ for some $k\in\mn$, then the corresponding indicator $\eta_k$ is equal to $1$. Since $\sum_{k\ge1}r_k<\infty$, there are only finitely many indices $k$ for which $r_k=1$. Removing the corresponding
	indicators merely shifts $Y$ by a constant, and such a shift does not affect log-concavity. Therefore, we may assume, without loss of generality, that $r_k<1$ for all $k\in\mn$. In particular, $$\mmp\{Y=0\}=\prod_{k\ge1}(1-r_k)>0.$$
	
	The generating function of the normalized sequence $\Big(\frac{\mmp\{Y=j\}}{\mmp\{Y=0\}}\Big)_{j\ge0}$ is given by
	$$\sum_{j\geq 0}\frac{\mmp\{Y=j\}}{\mmp\{Y=0\}}z^j=\frac{\me[z^Y]}{\mmp\{Y=0\}}=\prod_{k\geq 1}\frac{1-r_k+r_kz}{1-r_k}=\prod_{k\geq 1}\Big(1+\frac{r_k}{1-r_k}z\Big).$$
	Hence, by Theorem~\ref{thm:totally_positive}, the normalized sequence is totally positive and therefore log-concave. Consequently, $(\mmp\{Y=j\})_{j\ge0}$ is also log-concave.
	
	Invoking \eqref{eq:choice_of_measure}, we obtain, for any $s\in\mr$ and $k\in\mn$,
	\begin{multline*}
		(\mmp^{(s)}\{Y=k\})^2=\frac{\eee^{2sk}(\mmp\{Y=k\})^2}{(\me [\eee^{sY}])^2}
		\ge \frac{\eee^{s(k+1)}\mmp\{Y=k+1\}}{\me [\eee^{sY}]}\frac{\eee^{s(k-1)}\mmp\{Y=k-1\}}{\me[\eee^{sY}]}\\=\mmp^{(s)}\{Y=k+1\}\mmp^{(s)}\{Y=k-1\}.
	\end{multline*}
	Thus,
	$(a_n(k))_{k\ge0}$ is log-concave.
	
	As mentioned above, 
	under $\mmp^{(s_n)}$, $Y$ satisfies the Lindeberg-Feller central limit theorem, with $\me^{(s_n)}[Y]=n$ and $\var^{(s_n)}[Y]=\psi^{\prime\prime}(s_n)$, see~\eqref{eq:var_Y0}. Therefore, combining these two facts and applying Lemma 2 in \cite{Bender:1973}, we conclude that the array $(a_n(k))_{n,k}$
	satisfies a local central limit theorem. 
	This proves \eqref{eq:Ps_n2} and also shows that it holds for
	every fixed $j\in\mathbb{Z}$, rather than only for $j=0$ and $j=1$.
\end{bluebar}

\begin{greenbar}
	\begin{proof}[Proof of Theorem \ref{thm:caseA}]
		(a)$\Rightarrow $(b) follows from Lemma \ref{lem:ineq}(a).
		
		\noindent (b)$\Rightarrow $(a)
		follows from Lemma \ref{lem:ineq}(b).
		
		\sloppy\noindent (a)$\Rightarrow $(c) and (d).
		We use $K_n$ as defined in Section \ref{sec:aux}. First, we prove that part (a) entails $K_n=n$ as long as $n$ is so large that $\sum_{k=1}^n\frac{1-r_k}{r_k\eee^{s_n}+1-r_k}=\sum_{k\ge n+1}\frac{r_k\eee^{s_n}}{r_k\eee^{s_n}+1-r_k}<1/3$ and $r_n\le 1/2$.
		Indeed, assuming that $r_{n+1}\eee^{s_n}\ge 1$ we arrive at a contradiction
		$$
			\frac{1}{3}\!>\!\sum_{k\ge n+1}\frac{r_k\eee^{s_n}}{r_k\eee^{s_n}+1-r_k}\!\ge\!\frac{r_{n+1}\eee^{s_n}}{r_{n+1}\eee^{s_n}+1-r_{n+1}}=1-\frac{1-r_{n+1}}{r_{n+1}\eee^{s_n}+1-r_{n+1}}\ge \frac{1}{2-r_{n+1}}\ge\frac{1}{2}.
		$$
		Hence, $K_n\le n$. Assuming now that $r_{n}\eee^{s_n}<1$ we arrive at another contradiction
		\[
		\frac{1}{3}>\sum_{k=1}^n\frac{1-r_k}{r_k\eee^{s_n}+1-r_k}\ge\frac{1-r_n}{r_n\eee^{s_n}+1-r_n}>\frac{1-r_n}{2-r_n}\ge \frac{1}{3}
		\]
		since
		$r_n\le 1/2$. Thus, $K_n=n$ for large $n$.
		
		With this at hand, (a)$\Rightarrow$(c) follows from
		$$
			0\!\leftarrow\!\sum_{k=1}^n\frac{1-r_k}{r_k\eee^{s_n}+1-r_k}\!=\!\sum_{k=1}^n\frac{(r_k^{-1}-1)\eee^{-s_n}}{1+(r_k^{-1}-1)\eee^{-s_n}}\!\ge\! \frac{1}{2}\sum_{k=1}^n(r_k^{-1}-1)\eee^{-s_n}\!\ge\! \frac{1-r_{k_1}}{2}\sum_{k=k_1}^nr_k^{-1}\eee^{-s_n}.
		$$
		Here, we have used for the first inequality that $K_n=n$ implies $(r_k^{-1}-1)\eee^{-s_n}\le 1$ for $k\leq n$. Recall for the second inequality that $k_1$ denotes the first index $k$ such that $r_k<1$.
		
		The implication (a)$\Rightarrow$(d) is a consequence of
		\[
		0\leftarrow\sum_{k\ge n+1}\frac{r_k\eee^{s_n}}{r_k\eee^{s_n}+1-r_k}\geq\sum_{k\ge n+1}\frac{r_k\eee^{s_n}}{2-r_k}\ge \frac{1}{2}\sum_{k\ge n+1}r_k\eee^{s_n}\ge 0.
		\]
		The first inequality is secured by $K_n=n$ for large enough $n$ which entails $r_k\eee^{s_n}<1$ for all $k\ge n+1$.
		
		\noindent (c)$\Rightarrow $(a) follows from Lemma \ref{lem:ineq}(c).
		
		\noindent (d)$\Rightarrow $(a) follows from Lemma \ref{lem:ineq}(d).
		
		\noindent\textbf{Proof of $\mmp\{Y=n\}\sim \exp (\psi(s_n)-s_n\psi^\prime(s_n))$.} In view of \eqref{eq:alt}, it suffices to show that $\lim_{n\to\infty} \mmp^{(s_{n})}\{Y=n\}=1$.
		
		Recall that $\me^{(s_n)}[Y]=n$ and $\var^{(s_n)}[Y] =\psi^{\prime\prime}(s_n)$. Since $Y$ is integer-valued, by Chebyshev's inequality
		$$
		\mmp^{(s_{n})}\{Y\neq n\}=\mmp^{(s_{n})}\{|Y-n|\ge 1\}=\mmp^{(s_{n})}\{|Y-\me^{(s_n)}[Y]|\ge 1\}\le \var^{(s_n)}[Y].
		$$
		Assumption (b) ($\psi^{\prime\prime}(s_n)\to0$) implies $\mmp^{(s_{n})}\{Y\neq n\}\to0$, and consequently, $\mmp^{(s_{n})}\{Y=n\}\to1$.

		\noindent\textbf{Proof of $\mmp\{Y=n\}\sim \mmp\{Y\ge n\}$.} By the Stolz–Ces\`{a}ro theorem, $\mmp\{Y\ge n\}\sim\mmp\{Y=n\}$ follows if we can prove that
		$\lim_{n\to\infty}\frac{\mmp\{Y=n+1\}}{\mmp\{Y=n\}}=0$.
		An application of \eqref{eq:Y=n+j} yields
		$$
		\frac{\mmp\{Y=n+1\}}{\mmp\{Y=n\}}=\eee^{-s_n}\frac{\mmp^{(s_n)}\{Y=n+1\}}{\mmp^{(s_n)}\{Y=n\}}\to0,\quad n\to\infty,
		$$
		since $s_n\to+\infty$, and $\mmp^{(s_n)}\{Y=n\}\to1$ whereas $\mmp^{(s_n)}\{Y=n+1\}\le1$.
	\end{proof}
\end{greenbar}

\begin{orangebar}
	\begin{proof}[Proof of Theorem \ref{thm:caseC}]
		
		\noindent\textbf{(i)-(iv)} follow from the dominated convergence theorem. To find termwise limits and appropriate summable majorants, we use\\
		for (i): $
		\sum_{k=1}^nr_k^{-1}\eee^{-s_n}=\sum_{k=0}^{n-1}r_{n-k}^{-1}\eee^{-s_n}
		$
		and the assumption (b)
		;\\
		for (ii): $
		\sum_{k\ge n+1}r_k\eee^{s_n}=\sum_{k\ge1}r_{n+k}\eee^{s_n}
		$
		and the assumption (a)
		;\\
		for (iii): split $\psi^{\prime\prime}(s_n)$ into two sums $\sum_{k=1}^n$ and $\sum_{k\ge n+1}$ and treat them separately. The first sum is equal to
		$$
		\sum_{k=1}^{n}\frac{r_k^{-1}\eee^{-s_n}-\eee^{-s_n}}{(r_k^{-1}\eee^{-s_n}-\eee^{-s_n}+1)^2}=\sum_{k=0}^{n-1}\frac{r_{n-k}^{-1}\eee^{-s_n}-\eee^{-s_n}}{(r_{n-k}^{-1}\eee^{-s_n}-\eee^{-s_n}+1)^2}.
		$$
		According to the assumption (b), the termwise limit is $q_k/(1+q_k)^2$. Keeping in mind that all $r_k\le 1$, to obtain a summable majorant, we consider the estimate
		$$
		\frac{r_{n-k}^{-1}\eee^{-s_n}-\eee^{-s_n}}{(r_{n-k}^{-1}\eee^{-s_n}-\eee^{-s_n}+1)^2}\le r_{n-k}^{-1}\eee^{-s_n}-\eee^{-s_n}\le r_{n-k}^{-1}\eee^{-s_n}
		$$
		and invoke (b) once again.
		
		Similarly, for the second sum
		$$
		\sum_{k\ge n+1}\frac{(1-r_k)r_k\eee^{s_n}}{(1-r_k+r_k\eee^{s_n})^2}=\sum_{k\ge 1}\frac{(1-r_{n+k})r_{n+k}\eee^{s_n}}{(1-r_{n+k}+r_{n+k}\eee^{s_n})^2}
		$$
		the termwise limit is $p_k/(1+p_k)^2$ by the assumption (a). The summable majorant is obtained from the estimate
		$$
		\frac{(1-r_{n+k})r_{n+k}\eee^{s_n}}{(1-r_{n+k}+r_{n+k}\eee^{s_n})^2}\le \frac{r_{n+k}\eee^{s_n}}{1-r_{n+k}+r_{n+k}\eee^{s_n}}\le r_{n+k}\eee^{s_n}
		$$
		together with another application of (a).\\
		for (iv): proceed as in (iii) using the bounds (after changing the summation indices)
		$$
		\frac{1-r_{n-k}}{1-r_{n-k}+r_{n-k}\eee^{s_n}}=\frac{r_{n-k}^{-1}\eee^{-s_n}-\eee^{-s_n}}{r_{n-k}^{-1}\eee^{-s_n}-\eee^{-s_n}+1}\le r_{n-k}^{-1}\eee^{-s_n}
		$$
		and
		$$
		\frac{r_{n+k}\eee^{s_n}}{1-r_{n+k}+r_{n+k}\eee^{s_n}}\le r_{n+k}\eee^{s_n}.
		$$
		\color{black}

		\noindent\textbf{Proof of $\mmp\{Y=n\}\sim c_0\exp (\psi(s_n)-s_n\psi^\prime(s_n))$.} In view of \eqref{eq:alt}, it suffices to show that $\lim_{n\to\infty} \mmp^{(s_{n})}\{Y=n\}=c_0$. To this end,
		we shall prove that, for $z\in (-\pi, \pi)$, 
		$$\lim_{n\to\infty} \me^{(s_n)}[\eee^{\ii z(Y-n)}]=\me\big[\eee^{\ii z\sum_{k\ge0}\theta_k}\big].$$
		
		Invoking \eqref{eq:choice_of_measure}
		we obtain, for any $z\in(-\pi,\pi)$,
		$$
		\me^{(s_n)}[\eee^{\ii z(Y-n)}]=\frac{\me[\eee^{(s_n+\ii z)Y-\ii zn}]}{\me[\eee^{s_nY}]}=\eee^{-\ii zn}\prod_{k\ge1}\frac{1-r_k+r_k\eee^{s_n+\ii z}}{1-r_k+r_k\eee^{s_n}}.
		$$
		Therefore, with $\log$ denoting the principal branch of the logarithm,
		\begin{multline*}
			\me^{(s_n)}[\eee^{\ii z(Y-n)}]=\exp\Big(\sum_{k\ge1}\log\Big(\frac{1-r_k+r_k\eee^{s_n+\ii z}}{1-r_k+r_k\eee^{s_n}}\Big)-\ii zn\Big)\\=\exp\Big(\sum_{k\ge n+1}\ldots+\sum_{k=1}^n\ldots-\ii zn\Big)=\exp\Big(\sum_{k\ge n+1}\log\Big(1+\frac{(\eee^{\ii z}-1)r_k\eee^{s_n}}{1-r_k+r_k\eee^{s_n}}\Big)\\+\sum_{k=1}^n\log\Big(1+\frac{(\eee^{-\ii z}-1)(1-r_k)}{1-r_k+r_k\eee^{s_n}}\Big)\Big)=\exp\Big(\sum_{k\ge 1}\log\Big(1+\frac{(\eee^{\ii z}-1)r_{n+k}\eee^{s_n}}{1-r_{n+k}+r_{n+k}\eee^{s_n}}\Big)\\
			+\sum_{k=0}^{n-1}\log\Big(1+\frac{(\eee^{-\ii z}-1)(1-r_{n-k})}{1-r_{n-k}+r_{n-k}\eee^{s_n}}\Big)\Big)=:\exp(S_1(n,z)+S_2(n,z)).
		\end{multline*}
		By Lemma \ref{lem:log_ineq} and the assumption (a), the absolute values of the summands defining $S_1$ are dominated by a summable sequence:
		\begin{equation*}
			\Big|\log\Big(1+\frac{(\eee^{\ii z}-1)r_{n+k}\eee^{s_n}}{1-r_{n+k}+r_{n+k}\eee^{s_n}}\Big)\Big|\!\le\!\frac{2|\tan(z/2)|r_{n+k}\eee^{s_n}}{1-r_{n+k}+r_{n+k}\eee^{s_n}}\!\le\! 2|\tan(z/2)|r_{n+k}\eee^{s_n}.
		\end{equation*}
		Similarly, Lemma \ref{lem:log_ineq} and (b) entail that the absolute values of the summands defining $S_2$ are dominated by a summable sequence:
		\begin{multline*}
			\Big|\log\Big(1+\frac{(\eee^{-\ii z}-1)(1-r_{n-k})}{1-r_{n-k}+r_{n-k}\eee^{s_n}}\Big)\Big|\le \frac{2|\tan(z/2)|(1-r_{n-k})}{1-r_{n-k}+r_{n-k}\eee^{s_n}}\\\le2|\tan(z/2)|\,\frac{1/r_{n-k}-1}{1/r_{n-k}-1+\eee^{s_n}} \le2|\tan(z/2)|\,\frac{1/r_{n-k}}{\eee^{s_n}}.
		\end{multline*}
		Therefore, by the dominated convergence theorem, for $z\in(-\pi,\pi)$, as $n\to\infty$,
		$$
		S_1(n,z)=\sum_{k\ge 1}\log\Big(1+\frac{(\eee^{\ii z}-1)r_{n+k}\eee^{s_n}}{1-r_{n+k}+r_{n+k}\eee^{s_n}}\Big)\to\sum_{k\ge 1}\log\Big(\frac{1+\eee^{\ii z}p_k}{1+p_k}\Big)
		$$
		and
		$$
			S_2(n,z)=\sum_{k=0}^{n-1}\log\Big(1+\frac{(\eee^{-\ii z}-1)(r_{n-k}^{-1}\eee^{-s_n}-\eee^{-s_n})}{r_{n-k}^{-1}\eee^{-s_n}-\eee^{-s_n}+1}\Big)\to\sum_{k\ge0}\log\Big(\frac{1+\eee^{-\ii z}q_k}{1+q_k}\Big).
		$$
		Summarizing, for $z\in(-\pi,\pi)$, as $n\to\infty$,
		$$
		\me^{(s_n)}[\eee^{\ii z(Y-n)}]\to\frac{1+\eee^{-\ii z}q_0}{1+q_0} \prod_{k\ge1}\frac{1+p_kq_k+\eee^{\ii z}p_k+\eee^{-\ii z}q_k}{(1+p_k)(1+q_k)}=\me\big[\eee^{\ii z\sum_{k\ge0}\theta_k}\big].
		$$
		Noting that both $Y-n$ and $\sum_{k\ge0}\theta_k$ are integer-valued, the Fourier inversion formula and the dominated convergence theorem yield, for each $j\in\mathbb{Z}$,
		\begin{multline*}
			\mmp^{(s_n)}\{Y-n=j\}=\frac{1}{2\pi}\int_{-\pi}^\pi \eee^{-\ii zj}\me^{(s_n)}[\eee^{\ii z(Y-n)}]\dd z\\~\to~ \frac{1}{2\pi}\int_{-\pi}^\pi\eee^{-\ii zj}\me\Big[\eee^{\ii z\sum_{k\ge0}\theta_k}\Big]\dd z
			=\mmp\Big\{\sum_{k\geq 0}\theta_k=j\Big\},\quad n\to\infty.
		\end{multline*}
		In particular, 
		$\mmp^{(s_{n})}\{Y=n\}\to \mmp\{\sum_{k\ge0}\theta_k=0\}=c_0$ as $n\to\infty$.
		
		Finally, $$c_0\ge\mmp\{ \theta_k=0~\text{for all}~k\in\mn_0\}=\frac{1}{1+q_0}\prod_{m\geq 1}\frac{1+p_mq_m}{(1+p_m)(1+q_m)}>0.$$ Positivity of the infinite product is secured by $$\sum_{k\geq 0}\frac{q_k}{1+q_k}<\infty\quad\text{and}\quad  \sum_{k\geq 1}\frac{p_k}{1+p_k}<\infty,$$ see point (iv) of the theorem. Moreover, $c_0<1$. Indeed, by the assumption~(c), $q_0>0$, whence 
		$$\var\Big[\sum_{k\ge0}\theta_k\Big]=\sum_{k\ge0}\var[\theta_k]\geq \var[\theta_0]=\frac{q_0}{(1+q_0)^2}>0.$$ Thus, the distribution of $\sum_{k\ge0}\theta_k$ is not degenerate at $0$.
		
		\noindent\textbf{Proof of $\mmp\{Y=n\}\sim \mmp\{Y\ge n\}$} mimics the proof of this relation in Theorem \ref{thm:caseA}.
	\end{proof}
\end{orangebar}

\begin{rem}\label{rem:explanation}
	Here, we explain why the statement of Theorem \ref{thm:caseC} is not as clean as those of Theorems \ref{thm:caseB} and \ref{thm:caseA}.
	
	The condition $\lim_{n\to\infty}\psi^{\prime\prime}(s_n)=\infty$ appearing in Theorem \ref{thm:caseB}, without any further conditions, entails that the $\mmp^{(s_n)}$-distributions of $(\psi^{\prime\prime}(s_n))^{-1/2}(Y-n)$ satisfy a central limit theorem. The condition $\lim_{n\to\infty}\psi^{\prime\prime}(s_n)=0$ appearing in Theorem \ref{thm:caseA}, also without further conditions, ensures that the $\mmp^{(s_n)}$-distributions of $Y-n$ are asymptotically degenerate at $0$.
	
	\sloppy The remaining condition $\lim_{n\to\infty}\psi^{\prime\prime}(s_n)\in (0,\infty)$ alone (holding possibly along a subsequence) does not seem to secure a limit theorem  for the $\mmp^{(s_n)}$-distributions of $Y-n$. Sufficient conditions given in Theorem \ref{thm:caseC} ensure the convergence of the $\mmp^{(s_n)}$-distributions of the infinite sum of independent elements of the {\it non-infinitesimal\footnote{This means that for $\varepsilon\in (0,1)$, $\max_{k\geq 1}\mmp^{(s_n)}\{|\1_{A_k}-\mmp^{(s_n)}(A_k)|>\varepsilon\}$ does not tend to $0$ as $n\to\infty$.}} array $(\1_{A_k}-\mmp^{(s_n)}(A_k))_{k,n\geq 1}$ to the distribution of a random variable represented by a random series. We note in passing that, because of non-infinitesimality, we cannot exploit in the setting of Theorem~\ref{thm:caseC} sufficient conditions \`{a} la Gnedenko and Kolmogorov \cite{Gnedenko+Kolmogorov:1954} for distributional convergence to an infinitely divisible distribution.
\end{rem}

\section{Proof of Theorem \ref{thm:equiv}}

{\bf Step 1.} Proving that $s_n$, the solution to $\psi^\prime_r(s)=n$, is an approximate solution\footnote{As noted in Remark \ref{rem}, such an approximation is precise enough for the statements of Theorems~\ref{thm:caseB}, \ref{thm:caseA} and \ref{thm:caseC} to hold.} to $\psi^\prime_u(s)+o(1/s)=n$.

It suffices to show that $\psi^\prime_u(s_n)-\psi^\prime_r(s_n)=o(1/s_n)$ as $n\to\infty$. Consider
\begin{multline*}
	\psi^\prime_u(s_n)-\psi^\prime_r(s_n)=\sum_{k\ge 1}\frac{(u_k-r_k)\eee^{s_n}}{(u_k\eee^{s_n}+1-u_k)(r_k\eee^{s_n}+1-r_k)}\\=\sum_{k\ge 1}\frac{\varepsilon_kr_k\eee^{s_n}}{(u_k\eee^{s_n}+1-u_k)(r_k\eee^{s_n}+1-r_k)}=\sum_{k=1}^n\ldots+\sum_{k\ge n+1}\ldots
	=:S_1(n)+S_2(n).
\end{multline*}
There exists $k_0\in\mn$ such that $|\varepsilon_k|<1/2$ for $k\geq k_0$. By assumptions of the theorem, as $n\to\infty$,
\begin{align*}
	|S_1(n)|\le \sum_{k=1}^n\frac{|\varepsilon_k|}{u_k\eee^{s_n}+1-u_k}&\le \sum_{k=1}^{k_0-1}\frac{|\varepsilon_k|}{u_k\eee^{s_n}+1-u_k}+\sum_{k=k_0}^n\frac{|\varepsilon_k|}{r_k|1+\varepsilon_k|\eee^{s_n}}\\&=o(\eee^{-s_n})+2\eee^{-s_n}\sum_{k=k_0}^n |\varepsilon_k|/r_k
	=o(1/s_n)
\end{align*}
and
$$
|S_2(n)|\le \eee^{s_n}\sum_{k\ge n+1}|\varepsilon_k|r_k
=o(1/s_n).
$$

\noindent{\bf Step 2.} Showing that $(u_k)$ satisfies the assumptions of Theorem 1.i, $i=1,2,3$ if so does~$(r_k)$.

\noindent{\bf Case 1.} $(r_k)$ satisfies the assumptions of Theorem \ref{thm:caseB}.

For $s>0$, consider
\begin{align*}
	\psi_u^{\prime\prime}(s)-\psi_r^{\prime\prime}(s)&=\sum_{k\geq 1}\frac{(1-u_k)u_k\eee^s}{(u_k\eee^s+1-u_k)^2}-\sum_{k\geq 1}\frac{(1-r_k)r_k\eee^s}{(r_k\eee^s+1-r_k)^2}\\&=\sum_{k\ge 1}\frac{(u_k-r_k)\eee^{s}((1-u_k)(1-r_k)-u_kr_k\eee^{2s})}{(u_k\eee^s+1-u_k)^2(r_k\eee^s+1-r_k)^2}\\&=\sum_{k\ge 1}\frac{\varepsilon_kr_k\eee^{s}((1-u_k)(1-r_k)-u_kr_k\eee^{2s})}{(u_k\eee^s+1-u_k)^2(r_k\eee^s+1-r_k)^2}.
\end{align*}
Since
$$
\sum_{k\ge 1}\frac{|\varepsilon_k|r_k\eee^{s}(1-u_k)(1-r_k)}{(u_k\eee^s+1-u_k)^2(r_k\eee^s+1-r_k)^2}\le \sum_{k\ge 1}\frac{|\varepsilon_k|}{u_k\eee^s+1-u_k}\le \sum_{k\ge 1}|\varepsilon_k|<\infty
$$
and
$$
\sum_{k\ge 1}\frac{|\varepsilon_k|u_kr_k^2\eee^{3s}}{(u_k\eee^s+1-u_k)^2(r_k\eee^s+1-r_k)^2}\le\sum_{k\ge 1}\frac{|\varepsilon_k|}{u_k\eee^s+1-u_k}\le \sum_{k\ge 1}|\varepsilon_k|<\infty,
$$
we obtain $\psi_u^{\prime\prime}(s)-\psi_r^{\prime\prime}(s)=O(1)$ as $s\to\infty$ and thereupon
$\psi_u^{\prime\prime}(s_n)~\sim~\psi_r^{\prime\prime}(s_n)$ as $n\to\infty$.

Thus, $(u_k)$ satisfies the assumption of Theorem \ref{thm:caseB}, namely, $\lim_{n\to\infty}\psi_u^{\prime\prime}(s_n)=\infty$. As a consequence,
$$
	\mmp\{Y_u\ge n\}\sim\mmp\{Y_u= n\}\sim\frac{\exp(\psi_u(s_n)-s_nn)}{(2\pi\psi^{\prime\prime}_u(s_n))^{1/2}}\sim\exp(\psi_u(s_n)-\psi_r(s_n))\mmp\{Y_r\ge n\}.
$$

\noindent{\bf Case 2.} $(r_k)$ satisfies the assumptions of Theorem \ref{thm:caseA}.

Then
$$
\eee^{s_n}\sum_{k\ge n+1}u_k=\eee^{s_n}\sum_{k\ge n+1}r_k(1+\varepsilon_k)\le 2 \eee^{s_n}\sum_{k\ge n+1}r_k\to 0, \quad n\to\infty.
$$
Thus, $(u_k)$ satisfies the assumption (d) of Theorem \ref{thm:caseA}, whence
$$
	\mmp\{Y_u\ge n\}\sim\mmp\{Y_u= n\}\sim\exp(\psi_u(s_n)-s_nn)\sim\exp(\psi_u(s_n)-\psi_r(s_n))\mmp\{Y_r\ge n\}.
$$

\noindent{\bf Case 3.} $(r_k)$ satisfies the assumptions of Theorem \ref{thm:caseC}.
\begin{enumerate}[(a)]
	\item Fix $k\in\mn$. Then $u_{n+k}\eee^{s_n}=r_{n+k}(1+\varepsilon_{n+k})\eee^{s_n}\sim r_{n+k}\eee^{s_n}\to p_k$ and $\sup_{n\ge N_1}u_{n+k}\eee^{s_n}\le 2(p_k+\alpha_k)$.
	
	\item Fix $k\in\mn_0$. Then $u_{n-k}^{-1}\eee^{-s_n}=r_{n-k}^{-1}(1+\varepsilon_{n-k})^{-1}\eee^{-s_n} 
	\sim r_{n-k}^{-1}\eee^{-s_n}\to q_k$ and since $|\varepsilon_n|<1/2$ for large enough $n$, $\sup_{n\ge N_2}u_{n-k}^{-1}\eee^{-s_n}\le 2(q_k+\beta_k)$.
	
	\item Since $(p_k)$ and $(q_k)$ are the same for the success probabilities $(r_k)$ and $(u_k)$, the 
	inequality $q_0>0$ holds
	automatically.
\end{enumerate}
Thus, $(u_k)$ satisfies the assumptions of Theorem \ref{thm:caseC} with the same $(p_k)$ and $(q_k)$ as $(r_k)$ does. As a consequence,
$$
	\mmp\{Y_u\ge n\}\sim \mmp\{Y_u= n\}\sim c_0\exp(\psi_u(s_n)-s_nn)\sim\exp(\psi_u(s_n)-\psi_r(s_n))\mmp\{Y_r\ge n\}.
$$

\noindent{\bf Step 3.} End of the proof.

It remains to note that, by the dominated convergence theorem, as $s\to\infty$,
\begin{multline*}
	\psi_u(s)-\psi_r(s)=\sum_{k\ge 1}\log\Big(\frac{1-u_k+u_k\eee^s}{1-r_k+r_k\eee^s}\Big)=\sum_{k\ge 1}\log\Big(1+\frac{r_k\varepsilon_k(\eee^s-1)}{1-r_k+r_k\eee^s}\Big)\\\to\sum_{k\ge 1}\log(1+\varepsilon_k)=\log A.
\end{multline*}
Indeed, $r_k(\eee^s-1)/(1-r_k+r_k\eee^s)\in [0,1)$ for $s\geq 0$ and $k\in\mn$. Furthermore, there exists $k_1\in\mn$ such that $|\varepsilon_k|\leq 1/2$ for all $k\geq k_1$. Recall that $|\log(1+x)|\leq 2|x|$ whenever $|x|\leq 1/2$. For $s>0$ and $k\geq k_1$, put $x=r_k\varepsilon_k(\eee^s-1)/(1-r_k+r_k\eee^s)$. Then $|x|\leq 1/2$ and therefore
$$
\Big|\log\Big(1+\frac{r_k\varepsilon_k(\eee^s-1)}{1-r_k+r_k\eee^s}\Big)\Big|\leq \frac{2r_k|\varepsilon_k|(\eee^s-1)}{1-r_k+r_k\eee^s}\le 2|\varepsilon_k|.
$$
Since $\sum_{k\ge1}|\varepsilon_k|<\infty$ by assumption, the sequence $(2|\varepsilon_k|)_{k\ge k_1}$ is a summable majorant. The finitely many terms with $k\in\{1,2,\ldots, k_1-1\}$ can be treated separately.

\section{Proof of Theorem \ref{thm:main}}
\subsection{Auxiliary results in the case
	$r_k=ck^{-\beta}$}

We start by calculating the value of an integral, which repeatedly appears in our proofs.
\begin{lemma}\label{lem:integral}
	For $\theta>0$, $\alpha>-1$, $\beta>0$ and $\gamma>(\alpha+1)/\beta$,
	$$
	\int_0^\infty\frac{x^\alpha}{(1+(\theta x)^\beta)^\gamma}\dd x=\frac{1}{\beta \theta^{\alpha+1}}{\rm B}\Big(\frac{\alpha+1}{\beta},\gamma-\frac{\alpha+1}{\beta}\Big),
	$$
	where ${\rm B}$ denotes the Euler beta function.
\end{lemma}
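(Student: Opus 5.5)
The natural route is to substitute $y=(\theta x)^\beta$ and reduce the integral to the standard Beta integral
$$
{\rm B}(a,b)=\int_0^\infty\frac{t^{a-1}}{(1+t)^{a+b}}\,\dd t,\qquad a,b>0.
$$

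First we remove $\theta$ by the linear change $u=\theta x$. Then $\dd x=\theta^{-1}\dd u$ and $x^\alpha=\theta^{-\alpha}u^\alpha$, so the left-hand side equals
$$
\theta^{-(\alpha+1)}\int_0^\infty\frac{u^\alpha}{(1+u^\beta)^\gamma}\,\dd u.
$$

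Next we put $t=u^\beta$, which is a smooth increasing bijection of $(0,\infty)$ since $\beta>0$. Thus $u=t^{1/\beta}$ and $\dd u=\beta^{-1}t^{1/\beta-1}\dd t$. The remaining integral becomes
$$
\frac1\beta\int_0^\infty\frac{t^{(\alpha+1)/\beta-1}}{(1+t)^\gamma}\,\dd t.
$$

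We then match this with the Beta representation by taking $a=(\alpha+1)/\beta$ and $a+b=\gamma$, that is, $b=\gamma-(\alpha+1)/\beta$. The hypothesis $\alpha>-1$ gives $a>0$, which makes the integral converge at $0$. The hypothesis $\gamma>(\alpha+1)/\beta$ gives $b>0$, which makes it converge at $\infty$, since the integrand behaves like $t^{-b-1}$ there.

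Multiplying by the prefactors $\theta^{-(\alpha+1)}\beta^{-1}$ yields exactly the stated formula. If needed, the Beta representation itself follows from ${\rm B}(a,b)=\int_0^1 v^{a-1}(1-v)^{b-1}\dd v$ via the substitution $v=t/(1+t)$. There is no real obstacle here; the only point requiring care is that the convergence conditions coincide with the stated hypotheses, so that all the integrals above are finite and the substitutions are legitimate.
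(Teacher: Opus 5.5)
Your proposal is correct and follows essentially the same route as the paper: the paper substitutes $z=(\theta x)^\beta$ in one step (you split it into $u=\theta x$ and $t=u^\beta$) and then uses $y=z/(z+1)$ to reach the Beta integral on $[0,1]$, which is exactly the substitution you cite for the representation $\int_0^\infty t^{a-1}(1+t)^{-(a+b)}\,\dd t={\rm B}(a,b)$. Your explicit check that the hypotheses $\alpha>-1$ and $\gamma>(\alpha+1)/\beta$ are precisely the convergence conditions at $0$ and $\infty$ is a small addition that the paper leaves implicit.
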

\begin{proof}
	Denote the integral by  $I$. Changing the variable $z=(\theta x)^\beta$ and then $y=z/(z+1)$,
	we arrive at
	\begin{multline*}
		I=
		\frac{1}{\beta \theta^{\alpha+1}}\int_0^{\infty}\frac{z^{(\alpha+1)/\beta-1}}{(1+z)^\gamma}\dd z=\frac{1}{\beta \theta^{\alpha+1}}\int_0^1 y^{(\alpha+1)/\beta-1}(1-y)^{\gamma-(\alpha+1)/\beta-1}{\rm d}y\\=\frac{1}{\beta \theta^{\alpha+1}}{\rm B}\Big(\frac{\alpha+1}{\beta},\gamma-\frac{\alpha+1}{\beta}\Big).
	\end{multline*}
\end{proof}

We shall need versions of the Euler–Maclaurin summation formula, which can be derived from formula (9.67) on p.~455 in \cite{Graham+Knuth+Patashnik:1990}. Let $m\in\mn_0$, $n\in \mn\cup \{\infty\}$ and $f:[m,n]\to\mr$ be a twice continuously differentiable function. Then $$\sum_{j=m}^n f(j)=\int_m^n f(x){\rm d}x+\frac{f(m)+f(n)}{2}+\frac{f^\prime(n)-f^\prime(m)}{12}-\int_m^n f^{\prime\prime}(x)\frac{P_2(x)}{2}{\rm d}x.$$ Here, $P_2$ is the periodic Bernoulli polynomial defined by $P_2(x):=\{x\}^2-\{x\}+1/6$, $\{x\}$ is the fractional part of $x$, and $f(\infty):=\lim_{x\to\infty}f(x)$ and $f^\prime(\infty):=\lim_{x\to\infty}f^\prime(x)$. If $n=\infty$, we additionally assume that the integrals involved converge. Since $|P_2(x)|\le 1/6$ for all $x\in\mr$ we obtain a formula to be used below
\begin{equation}\label{eq:EM}
	\sum_{j=m}^n f(j)=\int_m^n f(x){\rm d}x+\frac{f(m)+f(n)}{2}+\frac{f^\prime(n)-f^\prime(m)}{12}+R_{m,n},
\end{equation}
where $|R_{m,n}|\leq (1/12)\int_m^n |f^{\prime\prime}(x)|{\rm d}x$. Analogously, if $f:[m,n]\to\mr$ is a continuously differentiable function, then
\begin{equation}\label{eq:EM2}
	\sum_{j=m}^n f(j)=\int_m^n f(x)\dd x+\frac{f(m)+f(n)}{2}+Q_{m,n},
\end{equation}
where $|Q_{m,n}|\leq (1/2)\int_m^n |f^\prime(x)|{\rm d}x$.

Lemmas \ref{lem:(a)series1}, \ref{lem:(a)series2} and \ref{lem:(a)series3} are designed to understand the asymptotic behavior of certain functional series parameterized by $a$ as $a\to 0+$.

\begin{lemma}\label{lem:(a)series1}
	For $\beta>1$,
	$$
	\sum_{k\ge 1}\frac{1}{1+(ak)^\beta}=\frac{\pi}{\beta a\sin(\pi/\beta)}-\frac{1}{2}+O(a),\quad a\to0+.
	$$
\end{lemma}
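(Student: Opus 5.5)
The natural tool is the Euler--Maclaurin formula \eqref{eq:EM2} (or \eqref{eq:EM}) applied to $f(x)=(1+(ax)^\beta)^{-1}$ on $[0,\infty)$, combined with Lemma~\ref{lem:integral} to evaluate the main integral. Concretely, I will write
$$
\sum_{k\ge1}\frac{1}{1+(ak)^\beta}=\sum_{k\ge0}f(k)-f(0)=\sum_{k\ge 0}f(k)-1,
$$
so that it is enough to expand $\sum_{k\ge0}f(k)$. Since $\beta>1$, $f$ is integrable on $[0,\infty)$, $f(\infty)=0$, and $f$ is continuously differentiable on $[0,\infty)$. (For $\beta\in(1,2)$, $f''$ has a singularity $x^{\beta-2}$ at $0$; this is why I prefer the first-order version \eqref{eq:EM2}.)

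First, by Lemma~\ref{lem:integral} with $\alpha=0$, $\gamma=1$, $\theta=a$,
$$
\int_0^\infty f(x)\dd x=\frac{1}{\beta a}{\rm B}\Big(\frac1\beta,1-\frac1\beta\Big)=\frac{1}{\beta a}\Gamma(1/\beta)\Gamma(1-1/\beta)=\frac{\pi}{\beta a\sin(\pi/\beta)},
$$
using the reflection formula. Next, the boundary term in \eqref{eq:EM2} is $(f(0)+f(\infty))/2=1/2$. Together with the subtracted $f(0)=1$, this produces the constant $1/2-1=-1/2$.

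The remaining issue is the error term. A direct use of \eqref{eq:EM2} gives $|Q|\le\frac12\int_0^\infty|f'(x)|\dd x=\frac12$, since $f$ is monotone from $1$ to $0$. That is only $O(1)$, not $O(a)$, so this is the main obstacle. I will refine it by using the exact Euler--Maclaurin remainder $\int_0^\infty f'(x)P_1(x)\dd x$ with $P_1(x)=\{x\}-1/2$, and integrating by parts once more against $P_2/2$, which is bounded with zero mean over periods. This gives the remainder
$$
-\frac{f'(0)}{12}-\int_0^\infty f''(x)\frac{P_2(x)}{2}\dd x .
$$
Here $f'(0)=0$ for $\beta>1$. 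Scaling $x=y/a$ gives $f''(x)=a^2g''(ax)$ with $g(y)=(1+y^\beta)^{-1}$, so
$$
\int_0^\infty|f''(x)|\dd x=a\int_0^\infty|g''(y)|\dd y .
$$
The last integral is finite because $g''(y)=O(y^{\beta-2})$ near $0$ (integrable since $\beta>1$) and $g''(y)=O(y^{-\beta-2})$ at infinity. Hence the remainder is $O(a)$.

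The case $\beta\in(1,2)$ needs care because $f''$ is unbounded near $0$. Formula \eqref{eq:EM} requires $f\in C^2$ on the closed interval, so I will apply it on $[\varepsilon,\infty)$ and let $\varepsilon\to0$ (or start the summation at $m=1$ and treat $f(0)$ separately); the bound $|R|\le\frac1{12}\int|f''|$ passes to the limit by integrability. Assembling the three pieces gives
$$
\sum_{k\ge1}\frac{1}{1+(ak)^\beta}=\frac{\pi}{\beta a\sin(\pi/\beta)}-\frac12+O(a),\qquad a\to0+ .
$$
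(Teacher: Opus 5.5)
Your proposal is correct and is essentially the paper's proof: second-order Euler--Maclaurin \eqref{eq:EM}, the Beta integral from Lemma~\ref{lem:integral} for the main term, and the remainder bound $\frac{1}{12}\int|f''|=O(a)$ obtained by scaling. The paper uses your fallback option of summing from $m=1$. This avoids the $x^{\beta-2}$ singularity of $f''$ at $0$, as well as the non-integer endpoint in your $\varepsilon\to0$ variant, at the cost of harmless $O(a^\beta)$ corrections coming from $\int_0^1 f$, $f(1)/2$ and $f'(1)/12$.
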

\begin{proof}
	For each $a>0$, put $f_a(x):=\frac{1}{1+(ax)^\beta}$ for $x\geq 0$. Since $f_a$ is twice continuously differentiable on $[1,\infty)$ with $f_a(\infty)=f_a^\prime(\infty)=0$, an application of formula~\eqref{eq:EM} yields
	\begin{multline*}
		\sum_{k\ge 1}f_a(k)=\int_1^\infty f_a(x)\dd x+\frac{1}{2(1+a^\beta)}+\frac{\beta a^\beta}{12(1+a^\beta)^2}+R_{1,\infty}(a)\\=\int_0^\infty f_a(x)\dd x-\frac{1}{2}+R_{1,\infty}(a)+O(a^\beta),
	\end{multline*}
	where
	$|R_{1,\infty}(a)| \leq (1/12)\int_1^\infty |f^{\prime\prime}_a(x)|{\rm d}x$.
	By Lemma \ref{lem:integral},
	$$
	\int_0^\infty f_a(x)\dd x=\frac{1}{\beta a}{\rm B}\Big(\frac{1}{\beta},1-\frac{1}{\beta}\Big)=\frac{\pi}{\beta a\sin(\pi/\beta)}.
	$$
	In view of
	$$
	f^{\prime\prime}_a(x)=\frac{2\beta^2a^{2\beta}x^{2\beta-2}}{(1+(ax)^\beta)^3}-\frac{\beta(\beta-1)a^\beta x^{\beta-2}}{(1+(ax)^\beta)^2},
	$$
	we conclude that
	$$
	\int_1^\infty
	|f^{\prime\prime}_a(x)|
	\dd x\leq \int_0^\infty\frac{2\beta^2a^{2\beta}x^{2\beta-2}}{(1+(ax)^\beta)^3}\dd x+ \int_0^\infty\frac{\beta(\beta-1)a^\beta x^{\beta-2}}{(1+(ax)^\beta)^2}\dd x=:I_1(a)+I_2(a).
	$$
	Invoking Lemma \ref{lem:integral} once again we obtain
	$$
	I_1(a)=2\beta a
	{\rm B}\Big(2-\frac{1}{\beta}, 1+\frac{1}{\beta}\Big)=O(a),\quad a\to0+
	$$
	and
	$$
	I_2(a)=(\beta-1)a
	{\rm B}\Big(1-\frac{1}{\beta},1+\frac{1}{\beta}\Big)=O(a),\quad a\to0+.
	$$
	Combining all fragments together completes the proof.
\end{proof}

\begin{lemma}\label{lem:(a)series2}
	For $\beta>1$,
	$$
	\sum_{k\ge 1}\frac{(ak)^\beta}{(1+(ak)^\beta)^2}=\frac{\pi}{\beta^2 a\sin(\pi/\beta)}+O(1),\quad a\to0+.
	$$
\end{lemma}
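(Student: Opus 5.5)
We follow the template of the proof of Lemma \ref{lem:(a)series1}. For each $a>0$ put $g_a(x):=\frac{(ax)^\beta}{(1+(ax)^\beta)^2}$ for $x\ge 0$. This function is continuously differentiable on $[0,\infty)$, since $\beta>1$, with $g_a(0)=0$ and $g_a(\infty)=0$. It is therefore natural to apply the cruder Euler--Maclaurin formula \eqref{eq:EM2} with $m=0$ and $n=\infty$. This yields
$$
\sum_{k\ge1}g_a(k)=\sum_{k\ge0}g_a(k)=\int_0^\infty g_a(x)\dd x+Q_{0,\infty}(a),\qquad |Q_{0,\infty}(a)|\le \frac12\int_0^\infty|g_a^\prime(x)|\dd x.
$$
Since the claimed error is only $O(1)$, the first-order formula suffices, and the second-order term $\eqref{eq:EM}$ is unnecessary.

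For the main term we invoke Lemma \ref{lem:integral} with $\alpha=\beta$, $\theta=a$ and $\gamma=2$. The condition $\gamma>(\alpha+1)/\beta$ reads $2>1+1/\beta$, which holds because $\beta>1$. We obtain
$$
\int_0^\infty g_a(x)\dd x=a^\beta\cdot\frac{1}{\beta a^{\beta+1}}{\rm B}\Big(1+\frac1\beta,1-\frac1\beta\Big)=\frac{1}{\beta a}\,\Gamma\Big(1+\frac1\beta\Big)\Gamma\Big(1-\frac1\beta\Big).
$$
By the reflection formula, $\Gamma(1+1/\beta)\Gamma(1-1/\beta)=\frac{1}{\beta}\Gamma(1/\beta)\Gamma(1-1/\beta)=\frac{\pi}{\beta\sin(\pi/\beta)}$. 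The integral therefore equals $\frac{\pi}{\beta^2a\sin(\pi/\beta)}$, which is the stated leading term.

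It remains to show that the remainder is $O(1)$, and this is the only point requiring care. The substitution $y=ax$ gives $\int_0^\infty|g_a^\prime(x)|\dd x=\int_0^\infty|h^\prime(y)|\dd y$ with $h(y):=y^\beta/(1+y^\beta)^2$. This total variation does not depend on $a$. The function $h$ increases from $h(0)=0$ to its maximum $h(1)=1/4$ and then decreases to $0$, so the total variation equals $1/2$. Consequently $|Q_{0,\infty}(a)|\le 1/4$ uniformly in $a>0$. This is much simpler than bounding $|f_a^{\prime\prime}|$ via Beta integrals, as was done in Lemma \ref{lem:(a)series1}.

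To make the application of \eqref{eq:EM2} on the infinite interval rigorous, we apply it on $[0,n]$ and then let $n\to\infty$. This is legitimate because $g_a$ is integrable, $g_a(n)\to0$ and $g_a^\prime$ is integrable. Combining the three fragments gives
$$
\sum_{k\ge1}\frac{(ak)^\beta}{(1+(ak)^\beta)^2}=\frac{\pi}{\beta^2a\sin(\pi/\beta)}+O(1),\qquad a\to0+.
$$
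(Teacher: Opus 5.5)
Your proof is correct and follows the paper's argument: the paper also applies \eqref{eq:EM2} with $m=0$, $n=\infty$, and evaluates the main term via Lemma~\ref{lem:integral} with the same Beta value. The only difference is in bounding $\int_0^\infty|f_a^\prime(x)|\dd x$. The paper splits $f_a^\prime$ into two terms and bounds each by a Beta integral, getting ${\rm B}(1,1)+2{\rm B}(2,1)=2$. Your scale-invariance and unimodality observation instead gives the exact value $1/2$ more directly.
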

\begin{proof}
	For each $a>0$, put
	$f_a(x):=\frac{(ax)^\beta}{(1+(ax)^\beta)^2}$ for $x\geq 0$. Since $f_a$ is continuously differentiable on $[0,\infty)$, an application of formula \eqref{eq:EM2} yields
	$$\sum_{k\ge 0}f_a(k)=\int_0^\infty f_a(x)\dd x+Q_{0,\infty}(a),
	$$
	where $|Q_{0,\infty}(a)|\leq (1/2)\int_0^\infty |f^\prime_a(x)|{\rm d}x$.
	By Lemma \ref{lem:integral},
	$$
	\int_0^\infty f_a(x)\dd x=\frac{1}{\beta a}{\rm B}\Big(1+\frac{1}{\beta},1-\frac{1}{\beta}\Big)=\frac{\pi}{\beta^2 a\sin(\pi/\beta)}.
	$$
	Since
	$$	f^{\prime}_a(x)=\frac{\beta a^{\beta}x^{\beta-1}}{(1+(ax)^\beta)^2}-\frac{2\beta a^{2\beta} x^{2\beta-1}}{(1+(ax)^\beta)^3},
	$$
	we obtain with the help of Lemma \ref{lem:integral}
	$$
		\int_0^\infty |f_a^\prime(x)|{\rm d}x
		\le \int_0^\infty\frac{\beta a^{\beta}x^{\beta-1}}{(1+(ax)^\beta)^2}\dd x+\int_0^\infty\frac{2\beta a^{2\beta} x^{2\beta-1}}{(1+(ax)^\beta)^3}\dd x={\rm B}(1,1)+ 2{\rm B}(2,1)=O(1)
	$$
	as $a\to 0+$.
	Combining all fragments together completes the proof.
\end{proof}

\begin{lemma}\label{lem:(a)series3}
	For $\beta>1$, as $a\to0+$
	\begin{equation}\label{eq:logsum}
		\sum_{k\ge 1}\log\Big(1+\frac{1}{(ak)^\beta}\Big)=\frac{\pi}{a\sin(\pi/\beta)}-\frac{\beta}{2}\log\Big(\frac{1}{a}\Big)-\frac{\beta}{2}\log(2\pi)+O(a^{\min(1, (\beta-1)^2)}).
	\end{equation}
\end{lemma}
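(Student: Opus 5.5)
Let $F(a):=\sum_{k\ge1}\log(1+(ak)^{-\beta})$. The plan is to compare $F$ with the integral $\int_0^\infty\log(1+(ax)^{-\beta})\,\dd x$, but the summand blows up logarithmically at $x=0$. So I would first split the logarithm as
$$
\log\Big(1+\frac{1}{(ak)^\beta}\Big)=\log\big(1+(ak)^\beta\big)-\beta\log(ak).
$$
For a finite cutoff $N$, the second piece gives $-\beta\sum_{k=1}^N\log(ak)=-\beta N\log a-\beta\log N!$. Stirling's formula then produces the terms $-\tfrac{\beta}{2}\log(2\pi)$ and $-\tfrac{\beta}{2}\log N$. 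However, the first piece diverges as $N\to\infty$, so this splitting is awkward.

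A cleaner route is to differentiate in $a$. Indeed, $F'(a)=-\frac{\beta}{a}\sum_{k\ge1}\frac{1}{1+(ak)^\beta}$, and Lemma \ref{lem:(a)series1} already gives
$$
F'(a)=-\frac{\pi}{a^2\sin(\pi/\beta)}+\frac{\beta}{2a}+O(1).
$$
Integrating from $a$ to $1$ yields
$$
F(a)=\frac{\pi}{a\sin(\pi/\beta)}-\frac{\beta}{2}\log\frac1a+C+O(a)
$$
for some constant $C$. The main obstacle is identifying $C=-\tfrac{\beta}{2}\log(2\pi)$. The error exponent $\min(1,(\beta-1)^2)$ in the statement suggests that the authors' Euler--Maclaurin remainder is weaker near $\beta=1$. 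The differentiation route should in fact give $O(a)$, provided the $O(a)$ in Lemma \ref{lem:(a)series1} is uniform, which it is for fixed $\beta$.

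To find $C$, I would use the Euler--Maclaurin formula \eqref{eq:EM2} directly on $g_a(x)=\log(1+(ax)^{-\beta})$ over $[1,\infty)$, writing $g_a(x)=\beta\log\frac{1}{a}-\beta\log x+\log(1+(ax)^\beta)$. The term $\sum_{k\le 1/a}\log k$ is handled by Stirling, which is where $\sqrt{2\pi}$ enters. An alternative, which I would try first, is to compare with an exactly solvable case via the Weierstrass product $\prod_{k\ge1}(1+x^2/k^2)=\frac{\sinh(\pi x)}{\pi x}$. With $\beta=2$ and $x=1/a$ this gives
$$
F(a)=\log\frac{\sinh(\pi/a)}{\pi/a}=\frac{\pi}{a}-\log\frac1a-\log(2\pi)+O(\eee^{-2\pi/a}),
$$
which matches the claim at $\beta=2$.

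For general $\beta$, however, the constant must be computed genuinely. I would compute
$$
C=\lim_{a\to0+}\Big(F(a)-\frac{\pi}{a\sin(\pi/\beta)}+\frac{\beta}{2}\log\frac1a\Big)
$$
via the Mellin transform. The Mellin transform of $\log(1+x^{-\beta})$ is $\frac{\pi}{s\sin(\pi s/\beta)}$ for $0<\mathrm{Re}\, s<\beta$. Hence
$$
F(a)=\frac{1}{2\pi\ii}\int_{(c)}\frac{\pi\,\zeta(s)\,a^{-s}}{s\sin(\pi s/\beta)}\,\dd s,\qquad 1<c<\beta.
$$
I would shift the contour left past $s=1$ and $s=0$. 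The pole at $s=1$ gives $\frac{\pi}{a\sin(\pi/\beta)}$. At $s=0$ there is a double pole, since $\sin(\pi s/\beta)\sim\pi s/\beta$, and its residue is $\beta(\zeta'(0)-\zeta(0)\log a)$. Using $\zeta(0)=-\tfrac12$ and $\zeta'(0)=-\tfrac12\log(2\pi)$, this equals $-\tfrac{\beta}{2}\log\tfrac1a-\tfrac{\beta}{2}\log(2\pi)$, exactly the claimed terms. The remaining integral on $\mathrm{Re}\, s=-\delta$ is $O(a^{\delta})$, and $\delta$ can be taken up to $\min(1,\beta)$ before the next pole at $s=-\beta$; the trivial zeros of $\zeta$ help.

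Justifying the contour shift requires decay of $\zeta(s)/(s\sin(\pi s/\beta))$ on vertical lines. Here $1/\sin$ decays like $\eee^{-\pi|t|/\beta}$, while $\zeta$ grows only polynomially, so the shift is legitimate. This Mellin step is the crux, and it yields an error of $O(a^{\min(1,\beta)-\epsilon})$, which is at least as good as the stated one.
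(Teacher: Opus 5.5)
Your proof is correct, and it takes a genuinely different route from the paper.

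\textbf{The paper's route.} The paper works entirely with real-variable tools. It truncates at $m_a=\lfloor a^{-\beta-(\beta-1)^{-1}}\rfloor$ and bounds the tail by $\sum_{k>m_a}(ak)^{-\beta}=O(a^{(\beta-1)^2})$. On $k\le m_a$ it uses precisely the splitting $\log(1+(ak)^{-\beta})=\log(1+(ak)^\beta)-\beta\log k+\beta\log(1/a)$ that you set aside. The divergence you worried about is harmless for a finite cutoff:
\begin{itemize}
\item Stirling's formula for $\log m_a!$ supplies $-\frac{\beta}{2}\log(2\pi)$.
\item The Euler--Maclaurin formula \eqref{eq:EM}, applied to $\sum_{k\le m_a}\log(1+(ak)^\beta)$, supplies $-\frac{\beta}{2}\log(1/a)$ from the boundary term at $m_a$.
\item Its integral term, after the substitution $z=(ax)^\beta$ and an integration by parts, gives the beta integral $\frac{\pi}{a\sin(\pi/\beta)}$.
\item All terms depending on $m_a$ cancel.
\end{itemize}
Your diagnosis of the exponent is right: $(\beta-1)^2$ is produced by this truncation.

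\textbf{What your route buys and costs.} Your Mellin argument replaces that bookkeeping with two residue computations. It also shows where each term originates: the pole of $\zeta$ at $s=1$, and the double pole $\beta/s^2$ of $\pi/(s\sin(\pi s/\beta))$ combined with $\zeta(0)=-\frac12$ and $\zeta'(0)=-\frac12\log(2\pi)$. It gives a sharper remainder as well. The price is complex-analytic input:
\begin{itemize}
\item Mellin inversion, and interchanging sum and integral on $\mathrm{Re}\,s=c\in(1,\beta)$. This is fine, since $\pi/(s\sin(\pi s/\beta))$ decays like $\eee^{-\pi|t|/\beta}$ on vertical lines and $\sum_k k^{-c}<\infty$.
\item Polynomial growth of $\zeta$ in vertical strips.
\end{itemize}
The paper instead reuses the same elementary machinery as Lemmas \ref{lem:(a)series1} and \ref{lem:(a)series2}. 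Your differentiation step via Lemma \ref{lem:(a)series1} is valid, but it is redundant once the Mellin computation is done.

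\textbf{One slip to fix.} As written, your remainder $O(a^{\min(1,\beta)-\epsilon})$ does not give the statement for $\beta\ge 2$. There the claimed error is $O(a)$, and $O(a^{1-\epsilon})$ is weaker. You do not need to stop short of $\mathrm{Re}\,s=-1$. Since $\beta>1$, the first singularity of the integrand $\zeta(s)a^{-s}\pi/(s\sin(\pi s/\beta))$ to the left of $0$ is at $s=-\beta<-1$. It is a simple pole with residue $\zeta(-\beta)a^{\beta}$, removable when $\beta$ is an even integer. The point $s=-1$ is not a pole. Shifting to $\mathrm{Re}\,s=-1$, or to any $\mathrm{Re}\,s=-\delta$ with $\delta<\beta$, gives $O(a)$. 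This implies the stated $O(a^{\min(1,(\beta-1)^2)})$.
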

\begin{rem}
	If $\beta=2$, there is another way to obtain a counterpart of \eqref{eq:logsum} with a better accuracy. The equality
	$$
	\prod_{k\ge 1}\Big(1+\frac{1}{(ak)^2}\Big)=\frac{a\sinh(\pi/a)}{\pi},
	$$
	which holds for any $a>0$, implies that
	$$
	\sum_{k\ge 1}\log\Big(1+\frac{1}{(ak)^2
	}\Big)=\frac{\pi}{a}-\log\Big(\frac{1}{a}\Big)-\log(2\pi)+O(\eee^{-2\pi/a}),\quad a\to0+.
	$$
\end{rem}
\begin{proof}
	Put $m_a:=\lfloor a^{-\beta-(\beta-1)^{-1}}\rfloor$ and
	split the sum into two parts: one with $k\ge m_a+1$ and the other with $1\le k\le m_a$. We estimate the first sum as follows: as $a\to0+$,
	$$
		\sum_{k\ge m_a+1}\log\Big(1+\frac{1}{(ak)^\beta}\Big)\le \sum_{k\ge m_a+1} (ak)^{-\beta}\le a^{-\beta}\int_{a^{-\beta-(\beta-1)^{-1}}}^\infty x^{-\beta}\dd x=\frac{a^{(\beta-1)^2}}{\beta-1}=O(a^{(\beta-1)^2}).
	$$
	The second sum is equal to
	\begin{multline*}
		\sum_{k=1}^{ m_a}\log\Big(1+\frac{1}{(ak)^\beta}\Big)=\sum_{k=1}^{ m_a}\log\big(1+(ak)^\beta\big)-\beta\sum_{k=1}^{ m_a}\log k+\beta m_a\log\Big(\frac{1}{a}\Big)\\=:S_1(a)-S_2(a)+C_1(a).
	\end{multline*}
	By Stirling's approximation,
	\begin{multline*}
		-S_2(a)=-\beta\Big(m_a\log(m_a)-m_a+\frac{\log(m_a)}{2}+\frac{\log(2\pi)}{2}\Big)+O(a^{\beta+(\beta-1)^{-1}})\\=:-C_2(a)
		+C_3(a)-C_4(a)-C_5 +O(a),\quad a\to0+.
	\end{multline*}
	Note that $-C_5$ appears in the final formula \eqref{eq:logsum}.
	
	To analyze $S_1(a)$, put
	$f_a(x)=\log(1+(ax)^\beta)$ for $x\geq 0$. According to \eqref{eq:EM},
	$$
	S_1(a)=\int_1^{m_a} f_a(x)\dd x+\frac{f_a(1)+f_a(m_a)}{2}+\frac{f^\prime_a(m_a)-f^\prime_a(1)}{12}+ R_{1,m_a}(a).
	$$
	We first observe that as $a\to0+$
	\begin{equation*}
		f_a(1)=\log(1+a^\beta)=O(a^\beta)=O(a)
	\end{equation*}
	and
	$$\frac{f_a(m_a)}{2}=\frac{\log(1+(am_a)^\beta)}{2}=\frac{-\beta\log(1/a)}{2}+\frac{\beta \log (m_a)}{2}+O(a^{\beta(\beta-1+(\beta-1)^{-1})}).
	$$
	Note that the first summand on the right-hand side appears in \eqref{eq:logsum}, whereas the second is equal to $C_4(a)$. Also,
	$O(a^{\beta(\beta-1+(\beta-1)^{-1})}) =O(a) $ as $a\to0+$. Further, using
	$$
	f^\prime_a(x)=\frac{\beta a^\beta x^{\beta-1}}{1+(ax)^\beta},
	$$
	we conclude that
	$$
	f^\prime_a(1)=\frac{\beta a^\beta }{1+a^\beta}=O(a^\beta)=O(a)
	$$
	and, recalling
	$m_a=\lfloor a^{-\beta-(\beta-1)^{-1}}\rfloor$ which particularly entails $\lim_{a\to 0+}am_a=\infty$, that
	$$
	f^\prime_a(m_a)=\frac{\beta a^\beta m_a^{\beta-1} }{1+(am_a)^\beta}\sim \frac{\beta}{m_a}=O(a)
	$$
	as $a\to0+$. In view of
	$$
	f^{\prime\prime}_a(x)=\frac{\beta(\beta-1) a^\beta x^{\beta-2}}{1+(ax)^\beta}-\frac{\beta^2 a^{2\beta} x^{2\beta-2}}{(1+(ax)^\beta)^2},
	$$
	we obtain with the help of Lemma \ref{lem:integral}
	\begin{multline*}
		12 |R_{1,m_a}(a)|\leq \int_0^\infty |f^{\prime\prime}_a(x)|\dd x
		\le \beta(\beta-1) a^\beta \int_0^\infty\frac{ x^{\beta-2}}{1+(ax)^\beta}\dd x+\beta^2 a^{2\beta}\int_0^\infty\frac{ x^{2\beta-2}}{(1+(ax)^\beta)^2}\dd x\\=a\Big((\beta-1)B\Big(1-\frac{1}{\beta},\frac{1}{\beta}\Big)+\beta B\Big(2-\frac{1}{\beta},\frac{1}{\beta}\Big)\Big)=O(a),
		\quad a\to0+.
	\end{multline*}
	Changing the variable $z=(ax)^\beta$ and then integrating by parts yields
	\begin{multline*}
		\int_1^{m_a} \log(1+(ax)^\beta)\dd x=
		\frac{1}{\beta a}\int_{a^\beta}^{(am_a)^\beta} \frac{\log(1+z)}{z^{1-1/\beta}}\dd z\\=
		\big(m_a\log(1+(am_a)^\beta)-\log(1+a^\beta)\big)-\frac{1}{a}\int_{a^\beta}^{(am_a)^\beta} \frac{z^{1/\beta}}{1+z}\dd z=:C_6(a)-C_7(a).
	\end{multline*}
	Write
	$$
	C_6(a) =\beta m_a\Big(\log m_a-\log(1/a)\Big)+m_a\log(1+(am_a)^{-\beta})+O(a).
	$$
	The first summand is equal to $C_2(a)-C_1(a)$, whereas the second
	is $O(a^{(\beta-1)^2})$ as $a\to0+$.
	
	To deal with $C_7(a)$, write
	\begin{equation}\label{eq:H}
		-C_7(a)=-\frac{1}{a}\int_{a^\beta}^{(am_a)^\beta}\Big(z^{1/\beta-1}- \frac{z^{1/\beta-1}}{1+z}\Big)\dd z=-\beta m_a+\beta+\frac{1}{a}\int_{a^\beta}^{(am_a)^\beta}\frac{z^{1/\beta-1}}{1+z}\dd z.
	\end{equation}
	The first summand is equal to $-C_3(a)$.
	We represent the last integral as follows:
	$$
		\frac{1}{a}\int_{a^\beta}^{(am_a)^\beta}\frac{z^{1/\beta-1}}{1+z}\dd z=\frac{1}{a}\int_{0}^{\infty}\ldots-\frac{1}{a}\int_{0}^{a^\beta}\ldots-\frac{1}{a}\int_{(am_a)^\beta}^{\infty}\ldots=:I_1(a)-I_2(a)-I_3(a).
	$$
	Changing the variable $y=(1+z)^{-1}$ we obtain
	$I_1(a)={\rm B}(1-1/\beta, 1/\beta)/a=\frac{\pi}{a\sin(\pi/\beta)}$. This term appears in \eqref{eq:logsum}. By L'H\^opital's rule,
	$$
	\frac{aI_2(a)-\beta a}{a^{\beta+1}}\sim\frac{\beta(1+a^\beta)^{-1}-\beta}{(\beta+1)a^\beta}\to -\frac{\beta}{\beta+1},\quad a\to0+.
	$$
	\noindent Hence, $-I_2(a)=-\beta+O(a^\beta)=-\beta+O(a)$ as $a\to0+$. Note that $-\beta$ cancels out with $\beta$ in \eqref{eq:H}. Finally,
	$$
	I_3(a)\le \frac{1}{a}\int_{(am_a)^\beta}^{\infty} z^{1/\beta-2}\dd x=\frac{(am_a)^{1-\beta}}{(1-1/\beta)a}~\sim~\frac{a^{(\beta-1)^2}}{1-1/\beta}=O(a^{(\beta-1)^2}),
	\quad a\to0+.
	$$
	Combining fragments together we arrive at \eqref{eq:logsum}.
\end{proof}

\subsection{Proof of Theorem \ref{thm:main}(a)}

\noindent{\bf Step 1.} Determining $s_n$, the solution to $\psi^\prime(s)=n$.

Recalling that $r_k=ck^{-\beta}$ for $k\in\mn$, we start with
$$
\psi^\prime(s)=\sum_{k\ge 1}\frac{r_k\eee^s}{r_k\eee^s+1-r_k}=\frac{\eee^s}{\eee^s-1}\sum_{k\ge 1}\frac{1}{1+k^\beta(c(\eee^s-1))^{-1}},\quad s\in\mr.
$$
Using Lemma \ref{lem:(a)series1} with $a:=(c(\eee^s-1))^{-1/\beta}$ we infer, as $s\to\infty$,
$$
	\psi^\prime(s)
	=\frac{\pi c^{1/\beta}(\eee^s-1)^{1/\beta}}{\beta \sin(\pi/\beta)}-\frac{1}{2}+O(\eee^{-s/\beta})=\frac{\pi c^{1/\beta}\eee^{s/\beta}}{\beta \sin(\pi/\beta)}-\frac{1}{2}+O(\eee^{-s\min(1/\beta,1-1/\beta)}).
$$

For large enough $n$, $s=s_n$ is the
solution to the equation $\psi^\prime(s)=n$. Equivalently,
\begin{equation}\label{eq:x}
	\frac{\pi c^{1/\beta}\eee^{s/\beta}}{\beta \sin(\pi/\beta)}-\frac{1}{2}+O(\eee^{-s\min(1/\beta,1-1/\beta)})=n.
\end{equation}
Solving \eqref{eq:x} asymptotically we conclude that
\begin{equation}\label{eq:s}
	s_n=\beta\log \Big(n+\frac{1}{2}\Big)+\beta\log\Big(\frac{\beta\sin(\pi/\beta)}{\pi c^{1/\beta}}\Big)+\varepsilon(n),\quad n\to\infty,
\end{equation}
where $\varepsilon$ is a function satisfying $\lim_{n\to\infty}\varepsilon(n)=0$.
To determine the asymptotic behavior of $\varepsilon$, we substitute the right-hand side of~\eqref{eq:s} into \eqref{eq:x}, thereby obtaining
$$
n=\Big(n+\frac{1}{2}\Big)\eee^{\varepsilon(n)/\beta}-\frac{1}{2}+O\Big(\frac{1}{n^{\min(1,\beta-1)}}\Big),\quad n\to\infty.
$$
This proves that $\varepsilon(n)=O(n^{-\min(2,\beta)})$ as $n\to\infty$.

\noindent {\bf Step 2.} Asymptotic behavior of $\psi^{\prime\prime}$.

In view of \eqref{eq:var_Y0},
\begin{multline*}
	\psi^{\prime\prime}(s) =c\,\eee^s\sum_{k\ge 1}\frac{k^\beta-c}{(k^\beta+c(\eee^s-1))^2}=\frac{\eee^s}{\eee^s-1}\sum_{k\ge1}\frac{k^\beta c^{-1}(\eee^s-1)^{-1}}{(1+k^\beta c^{-1}(\eee^s-1)^{-1})^{2}}\\-\frac{\eee^s}{(\eee^s-1)^2}\sum_{k\ge1}\frac{1}{(1+k^\beta c^{-1}(\eee^s-1)^{-1})^{2}}=:T_1(s)-T_2(s).
\end{multline*}
According to Lemma \ref{lem:(a)series1},
$$
T_2(s)\le \frac{\eee^s}{(\eee^s-1)^2}\sum_{k\ge1}\frac{1}{1+k^\beta c^{-1}(\eee^s-1)^{-1}}=O(\eee^{-s(1-1/\beta)})=o(1),\quad s\to\infty,
$$
whereas by Lemma \ref{lem:(a)series2},
$$
T_1(s) =\frac{\eee^s}{\eee^s-1}\Big(\frac{\pi c^{1/\beta}(\eee^s-1)^{1/\beta}}{\beta^2 \sin(\pi/\beta)}+O(1)\Big)~\sim~ \frac{\pi c^{1/\beta}}{\beta^2 \sin(\pi/\beta)}\eee^{s/\beta},\quad s\to\infty.
$$
Hence,
\begin{equation}\label{eq:(a)variance}
	\psi^{\prime\prime}(s)
	~\sim~ \sigma^2\eee^{s/\beta},\quad s\to\infty,
\end{equation}
where $\sigma^2:=\frac{\pi c^{1/\beta}}{\beta^2 \sin(\pi/\beta)}$. In particular, \eqref{eq:(a)variance}
implies that we are in the setting of Theorem~\ref{thm:caseB}.

\noindent {\bf Step 3.} Asymptotic behavior of $\psi$.

By Lemma \ref{lem:(a)series3} with the same $a$ as before, that is,
$a=(c(\eee^s-1))^{-1/\beta}$,
\begin{multline}\label{eq:psi_k}
	\psi(s)
	=\sum_{k\ge 1}\log\Big(1+\frac{c(\eee^s-1)}{k^\beta}\Big)=\frac{\pi c^{1/\beta}(\eee^s-1)^{1/\beta}}{\sin(\pi/\beta)}-\frac{\beta}{2}\log\big(c^{1/\beta}(\eee^s-1)^{1/\beta}\big)\\-\frac{\beta}{2}\log(2\pi)+o(1)=\frac{\pi c^{1/\beta}\eee^{s/\beta}}{\sin(\pi/\beta)}-\frac{s}{2}-\frac{\log c}{2}-\frac{\beta}{2}\log(2\pi)+o(1),\quad s\to\infty.
\end{multline}

\noindent {\bf Step 4.} End of the proof.

By Theorem \ref{thm:caseB}, combining \eqref{eq:s}, \eqref{eq:(a)variance} and \eqref{eq:psi_k} we arrive at
\begin{multline*}
	\mmp\{Y\ge n\}~\sim~\mmp\{Y=n\}~\sim~\exp\Big(-\beta n\log n-\beta\Big(\log\Big(\frac{\beta\sin(\pi/\beta)}{\pi c^{1/\beta}}\Big)-1\Big)n\\-\frac{\beta+1}{2}\log n-\frac{\beta}{2}\log(2\beta\sin(\pi/\beta))-\frac{1}{2}\log(2\pi)+\frac{1}{2}\log\beta\Big),\quad
	n\to\infty.
\end{multline*}

\subsection{Auxiliary results for $r_k=c\exp(-k^{\beta})$, $\beta\in(0,1)$}

First, we calculate the values of integrals which arise
in the proofs of Lemmas~\ref{lem:exp} and \ref{lem:int2}.

\begin{lemma}\label{lem:coeff}
	For $n\in\mn$, $$\int_1^\infty\frac{(\log y)^n}{y(1+y)}\dd y=n!(1-2^{-n})\zeta(n+1),$$ where $\zeta$ is the Riemann zeta function. For $n\in\mn_0$, $$\int_1^\infty\frac{\log (1+y^{-1})}{y}(\log y)^n\dd y=n!(1-2^{-(n+1)})\zeta(n+2).$$
\end{lemma}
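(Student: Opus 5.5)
The plan is to substitute $y=\eee^t$ in both integrals. This turns them into integrals over $(0,\infty)$ of a power of $t$ against a function of $\eee^{-t}$. We then expand that function in a geometric or alternating series and integrate term by term using $\int_0^\infty t^n\eee^{-mt}\dd t=n!/m^{n+1}$. The resulting Dirichlet series is the alternating (Dirichlet eta) series $\sum_{m\ge1}(-1)^{m-1}m^{-s}=(1-2^{1-s})\zeta(s)$. This identity is just the definition of $\zeta$ for ${\rm Re}\,s>0$ recalled before Theorem \ref{thm:main}.

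For the first integral, $\dd y/y=\dd t$ and $1/(1+y)=\eee^{-t}/(1+\eee^{-t})=\sum_{m\ge1}(-1)^{m-1}\eee^{-mt}$ for $t>0$. This gives
$$\int_1^\infty\frac{(\log y)^n}{y(1+y)}\dd y=\sum_{m\ge1}(-1)^{m-1}\int_0^\infty t^n\eee^{-mt}\dd t=n!\sum_{m\ge1}\frac{(-1)^{m-1}}{m^{n+1}}=n!(1-2^{-n})\zeta(n+1).$$
For the second integral, $\log(1+y^{-1})=\log(1+\eee^{-t})=\sum_{m\ge1}(-1)^{m-1}\eee^{-mt}/m$. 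Integrating term by term against $t^n$ yields
$$n!\sum_{m\ge1}\frac{(-1)^{m-1}}{m^{n+2}}=n!(1-2^{-(n+1)})\zeta(n+2),$$
which is valid for all $n\in\mn_0$ since $n+2\ge2$.

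The only step needing care is justifying the interchange of summation and integration for alternating series. I would handle it in one of two ways. The first is to use partial sums: the alternating-series remainder is bounded by its first omitted term, so $|\sum_{m>M}(-1)^{m-1}\eee^{-mt}|\le\eee^{-(M+1)t}$, and similarly with the extra factor $1/m$. Against $t^n$ these bounds give errors of order $n!/(M+1)^{n+1}\to0$. The second is to apply dominated convergence, since the partial sums are bounded by the first term $\eee^{-t}$, which is integrable against $t^n$.

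In the first integral with $n\ge1$ the resulting series converges absolutely, so either route works. For $n=0$ the first formula would involve $\zeta(1)$, which is why that formula is restricted to $n\in\mn$. The second formula never hits $\zeta(1)$, since $n+2\ge2$, so it holds for all $n\in\mn_0$.
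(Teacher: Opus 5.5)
Your proof is correct. For the first integral it is the paper's argument with the citation unpacked. The paper substitutes $x=\log y$ and quotes formula (23.2.7) of Abramowitz--Stegun, namely $\int_0^\infty t^{s-1}(\eee^t+1)^{-1}\dd t=(1-2^{1-s})\Gamma(s)\zeta(s)$. Your geometric-series expansion, with the alternating-series remainder bound, is precisely a proof of that representation at $s=n+1$.

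For the second integral the routes genuinely differ. The paper integrates by parts with $u=\log(1+y^{-1})$ and $\dd v=(\log y)^n\dd y/y$; the boundary terms vanish because $\log(1+y^{-1})(\log y)^{n+1}\to0$ as $y\to\infty$. This turns the integral into $(n+1)^{-1}$ times the first integral with $n+1$ in place of $n$. You instead expand $\log(1+\eee^{-t})$ in its Taylor series and integrate termwise.

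The paper's route reuses the first formula. It also makes transparent why the second identity holds for all $n\in\mn_0$: the shifted index $n+1$ lies in $\mn$. Your route is self-contained and treats both integrals uniformly. It also justifies the interchange of summation and integration explicitly, which the paper leaves to the reference.
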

\begin{proof}
	As far as the first integral is concerned, we change the variable $x=\log y$ and then use formula (23.2.7) in \cite{Abramowitz+Stegun:1964}. Integrating the second integral by parts we conclude that it is equal to $(n+1)^{-1}$ times the first integral with $n+1$ replacing~$n$.
\end{proof}

Next, we derive the first-order asymptotic behavior of an integral which appears in subsequent proofs.

\begin{lemma}\label{lem:int}
	For $\beta>0$, $c>0$, $b>c$ and $d\in\mr$,
	$$
	\int_1^\infty\frac{(a\eee^{x^\beta})^cx^d}{(1+a\eee^{x^\beta})^b}\dd x~\sim~ \frac{{\rm B}\,(b-c,c)}{\beta}
	\Big(\log\frac{1}{a}\Big)^{(d+1)/\beta-1},\quad a\to0+,
	$$
	where ${\rm B}$ is the Euler beta function.
\end{lemma}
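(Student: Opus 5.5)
The plan is a substitution that turns the integrand into a beta-type kernel in $\log(1/a)$. Put $L:=\log(1/a)\to\infty$ and substitute $y=x^\beta-L$, so that $a\eee^{x^\beta}=\eee^{y}$. Then $x=(L+y)^{1/\beta}$ and $\dd x=\beta^{-1}(L+y)^{1/\beta-1}\dd y$. The integral becomes
$$
\frac{1}{\beta}\int_{1-L}^{\infty}\frac{\eee^{cy}}{(1+\eee^{y})^b}\,(L+y)^{(d+1)/\beta-1}\dd y .
$$
The kernel $g(y):=\eee^{cy}(1+\eee^y)^{-b}$ decays like $\eee^{cy}$ as $y\to-\infty$ and like $\eee^{-(b-c)y}$ as $y\to+\infty$. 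It is therefore integrable because $c>0$ and $b>c$. The substitution $t=\eee^y/(1+\eee^y)$ gives $\int_\mr g(y)\dd y={\rm B}(c,b-c)$, which matches the constant in the statement (as in Lemma \ref{lem:integral}). It remains to show that
$$
\int_{1-L}^\infty g(y)\Big(1+\frac{y}{L}\Big)^{\gamma}\dd y\to\int_\mr g(y)\dd y,\qquad \gamma:=\frac{d+1}{\beta}-1,
$$
after factoring out $L^{\gamma}$.

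I would split the domain at $y=-L/2$.

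\textbf{Region $y\ge -L/2$.} Here $1+y/L\in[1/2,\infty)$, so the pointwise limit of the integrand is $g(y)$. A majorant is $g(y)\max(2^{|\gamma|},(1+|y|)^{\gamma^+})$, using $1+y/L\le 1+|y|$ for $L\ge1$. This majorant is integrable because $g$ decays exponentially at both ends. Dominated convergence then gives convergence to $\int_\mr g$.

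\textbf{Region $1-L\le y\le -L/2$.} This is the step needing care, because $(1+y/L)^\gamma$ may blow up near $y=1-L$ when $\gamma<0$. In this range $g(y)\le \eee^{cy}\le \eee^{-cL/2}$, and $(1+y/L)^\gamma=(x^\beta/L)^\gamma$ with $x\ge1$.
\begin{itemize}
\item If $\gamma\ge 0$, the factor $(1+y/L)^\gamma$ is at most $1$, so this piece is at most $L\eee^{-cL/2}\to0$.
\item If $\gamma<0$, the factor is at most $L^{-\gamma}$, since $x^\beta\ge1$. This piece is then at most $L^{1-\gamma}\eee^{-cL/2}\to 0$.
\end{itemize}
Multiplying back by $L^\gamma/\beta$, the contribution of this region is $o(L^\gamma)$ in either case.

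Combining the two regions gives the asymptotic $\beta^{-1}{\rm B}(b-c,c)L^{(d+1)/\beta-1}$, as claimed. The only real obstacle is the uniform control near the lower endpoint when $(d+1)/\beta-1<0$, and the exponential smallness of $g$ there, $\eee^{-cL/2}$, absorbs any polynomial factor in $L$.
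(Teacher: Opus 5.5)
Your proof is correct. Your substitution is the paper's up to a logarithm: the paper sets $y=a\eee^{x^\beta}$, which is $\eee^{y}$ in your variable. The domination step, however, is done differently.

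\textit{The paper's route.} It regards $(\log(y/a))^{(d+1)/\beta-1}$ as a slowly varying function of $y/a$ and invokes Potter's theorem. This gives the majorant $2y^{c-1}\max\{y^{\delta},y^{-\delta}\}(1+y)^{-b}$ on $[aX,\infty)$, with $\delta<\min(b-c,c)$. The short leftover piece $[a\eee,aX]$, i.e.\ $x^\beta\in[1,\log X]$, is then bounded by a constant times $a^c L^{-\gamma}\to 0$.

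\textit{Your route.} You use instead that this factor is an explicit power of $L+y$. After pulling out $L^\gamma$, you bound $(1+y/L)^\gamma$ by hand: by $\max\{2^{|\gamma|},(1+|y|)^{\max(\gamma,0)}\}$ on $y\ge -L/2$. On the lower half-range, the bound $g\le \eee^{-cL/2}=a^{c/2}$ absorbs the crude polynomial factor $L^{1+\max(0,-\gamma)}$.

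\textit{What each buys.} Your version is fully elementary and self-contained. The paper's version is shorter once Potter's bounds are available. It also does not rely on the slowly varying factor being an exact power of a logarithm, so it would go through unchanged with, say, an extra slowly varying factor in $x$. Yours would then need Potter-type bounds as well.
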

\begin{proof}
	Changing the variable $y=a\eee^{x^\beta}$ we infer that the integral is equal to
	$$
	\frac{1}{\beta} \int_{a\eee}^\infty \frac{y^{c-1}}{(1+y)^b}\Big(\log\frac{y}{a}\Big)^{(d+1)/\beta-1}\dd y.
	$$
	
	Consider a function $\ell$ defined by
	$\ell(x)=(\log x)^{(d+1)/\beta-1}$ for $x\ge 1$. It is slowly varying at $\infty$. Hence, by Potter's theorem (Theorem 1.5.6 in \cite{Bingham+Goldie+Teugels:1989}), for any $\delta\in(0,\min(b-c,c))$, there exists $X>0$ such that
	$$
	\frac{\ell(u)}{\ell(v)}\le 2\max\Big\{\Big(\frac{u}{v}\Big)^\delta, \Big(\frac{u}{v}\Big)^{-\delta}\Big\}\quad \text{for all } u\ge X, v\ge X.
	$$
	We can and do assume that $X>\eee$.
	Applying the latter inequality with $u=y/a$, $v= 1/a$ we have, for small enough~$a$ \and $y\ge aX$,
	\begin{equation*}
		\Big(\log\frac{1}{a}\Big)^{-(d+1)/\beta+1}
		\frac{y^{c-1}}{(1+y)^b}\Big(\log\frac{y}{a}\Big)^{(d+1)/\beta-1}
		\le2\, 
		\frac{y^{c-1}\max\{y^\delta,y^{-\delta}\}}{(1+y)^b}.
	\end{equation*}
	Since the right-hand side is integrable on 
	$(0,\infty)$,
	invoking the dominated
	convergence theorem we obtain
	\begin{multline*}
		\lim_{a\to0+}\Big(\log\frac{1}{a}\Big)^{-(d+1)/\beta+1}\int_{aX}^\infty \frac{y^{c-1}}{(1+y)^b}\Big(\log\frac{y}{a}\Big)^{(d+1)/\beta-1}\dd y=\int_{0}^\infty \frac{y^{c-1}}{(1+y)^b}\dd y\\={\rm B}\,(b-c,c)<\infty.
	\end{multline*}
	Since
	$X>\eee$, we are left with
	\begin{multline*}
		\Big(\log\frac{1}{a}\Big)^{-(d+1)/\beta+1}\int_{a\eee}^{aX} \frac{y^{c-1}}{(1+y)^b}\Big(\log\frac{y}{a}\Big)^{(d+1)/\beta-1}\dd y\\\le c^{-1} a^c(X^c-\eee^c)\max\big(1, (\log X)^{(d+1)/\beta-1}\big)\Big(\log\frac{1}{a}\Big)^{-(d+1)/\beta+1}\to 0,\quad a\to0+.
	\end{multline*}
\end{proof}

Next, we provide asymptotic expansions for integrals which appear in Lemmas \ref{lem:(b)series1} and \ref{lem:(b)series2} after applications of
the Euler-Maclaurin formula.

\begin{lemma}\label{lem:exp}
	For $\beta\in(0,1)$, as $a\to 0+$,
	\begin{equation}\label{eq:int_exp}
		\int_1^\infty\frac{\dd x}{1+a\eee^{x^\beta}}=\Big(\log\frac{1}{a}\Big)^{1/\beta}-1+\frac{2}{\beta}\sum_{\text{odd }n=1}^{\lfloor 1/\beta\rfloor-1 }\binom{1/\beta-1}{n}c_n\Big(\log\frac{1}{a}\Big)^{1/\beta-1-n}+g(a).
	\end{equation}
	where
	$c_n:=n!(1-2^{-n})\zeta(n+1)$ for $n\in\mn$; $g(a)=O\big((\log (1/a))^{\{1/\beta\}-1}\big)$ as $a\to 0+$ if $1/\beta$ is not integer, and $g(a)=O\big(a(\log (1/a))^{1/\beta-1}\big)$ as $a\to 0+$ if $1/\beta$ is integer.
	
\end{lemma}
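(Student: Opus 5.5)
The plan is to substitute $y=a\eee^{x^\beta}$, which turns the integral into a logarithmic integral that can be expanded binomially. Put $L:=\log(1/a)$. Then $x=(\log(y/a))^{1/\beta}=(L+\log y)^{1/\beta}$ and $\dd x=\beta^{-1}(L+\log y)^{1/\beta-1}y^{-1}\dd y$, so
$$
\int_1^\infty\frac{\dd x}{1+a\eee^{x^\beta}}=\frac{1}{\beta}\int_{a\eee}^\infty\frac{(L+\log y)^{1/\beta-1}}{y(1+y)}\dd y .
$$
I will split at $y=1$, writing $\int_{a\eee}^1+\int_1^\infty$. On $[a\eee,1]$ I use $\frac{1}{y(1+y)}=\frac1y-\frac{1}{1+y}$. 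The $\frac1y$ part integrates exactly:
$$
\frac1\beta\int_{a\eee}^1(L+\log y)^{1/\beta-1}\frac{\dd y}{y}=L^{1/\beta}-1,
$$
since $L+\log y$ runs from $1$ to $L$. This produces the first two terms of \eqref{eq:int_exp}.

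What remains is
$$
\frac1\beta\Big(\int_1^\infty\frac{(L+\log y)^{1/\beta-1}}{y(1+y)}\dd y-\int_{a\eee}^1\frac{(L+\log y)^{1/\beta-1}}{1+y}\dd y\Big).
$$
In the second integral I substitute $y\mapsto 1/y$, so that $\frac{\dd y}{1+y}$ becomes $\frac{\dd y}{y(1+y)}$ and $\log y$ becomes $-\log y$. The difference then takes the form
$$
\frac1\beta\int_1^{1/(a\eee)}\frac{(L+u)^{1/\beta-1}-(L-u)^{1/\beta-1}}{y(1+y)}\dd y+\frac1\beta\int_{1/(a\eee)}^\infty\frac{(L+u)^{1/\beta-1}}{y(1+y)}\dd y,\qquad u=\log y .
$$
The tail piece is $O(a\,L^{1/\beta-1})$. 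For the main piece, I expand $(L\pm u)^{1/\beta-1}=\sum_n\binom{1/\beta-1}{n}(\pm u)^nL^{1/\beta-1-n}$ for $n\le\lfloor1/\beta\rfloor-1$. The even $n$ cancel and the odd $n$ double. Each term $\int_1^\infty u^n\,\frac{\dd y}{y(1+y)}$ equals $c_n$ by Lemma~\ref{lem:coeff}, and extending the range from $1/(a\eee)$ to $\infty$ only costs $O(a\,L^{n})$. This gives the odd-$n$ sum $\frac2\beta\sum\binom{1/\beta-1}{n}c_nL^{1/\beta-1-n}$.

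The main obstacle is controlling the Taylor remainder of $h(u):=(L+u)^{\gamma}-(L-u)^{\gamma}$, with $\gamma=1/\beta-1$, uniformly on $0\le u\le L-1$. The difficulty is that near $u\approx L$ the quantity $L-u$ is small and $\gamma-\lfloor1/\beta\rfloor<0$, so the derivatives blow up. My first attempt is to split the $u$-range at $L/2$.

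For $u\le L/2$, the remainder after $m=\lfloor1/\beta\rfloor-1$ terms is bounded by $C\,u^{m+1}L^{\gamma-m-1}$. Since $\gamma-m-1=\{1/\beta\}-1$ and $\int u^{m+1}\frac{\dd y}{y(1+y)}<\infty$, this contributes $O(L^{\{1/\beta\}-1})$.

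For $u\ge L/2$, the weight is $\frac{1}{y(1+y)}\le \eee^{-u}\le \eee^{-L/2}$. Meanwhile both $h(u)$ and the polynomial part are at most polynomial in $L$. Hence this region contributes $O(\eee^{-L/2}L^{C})=O(\sqrt a\,L^C)$, which is negligible in both cases.

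When $1/\beta$ is an integer, $\gamma$ is a nonnegative integer. The binomial expansion is then exact with $n\le\gamma=\lfloor1/\beta\rfloor-1$, so there is no remainder. The only errors are the $O(a\,L^{1/\beta-1})$ cutoff terms. Combining these estimates yields $g(a)$ as claimed.
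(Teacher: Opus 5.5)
Your proof is correct and follows essentially the paper's route: the same substitution $y=a\eee^{x^\beta}$, the same split at $y=1$ with $\frac{1}{y(1+y)}=\frac1y-\frac1{1+y}$, a binomial expansion of $(L+\log y)^{1/\beta-1}$, and Lemma~\ref{lem:coeff} (via $y\mapsto 1/y$) to identify the coefficients $c_n$. The differences are cosmetic. You cancel the even-order terms inside a single reflected integral, whereas the paper expands $I_{1,2}$ and $I_2$ separately and lets them cancel afterwards. Your split at $u=L/2$ is valid but unnecessary, since the Taylor remainder of $(1+x)^{1/\beta-1}$ is already $O(|x|^{\lfloor 1/\beta\rfloor})$ uniformly on $[-1,0]$, which is the bound the paper implicitly uses.
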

\begin{proof}
	Write, for $a\in (0, 1/\eee)$,
	$$
	\int_1^\infty\frac{\dd x}{1+a\eee^{x^\beta}}=\int_1^{(\log(1/a))^{1/\beta} }\ldots+\int_{(\log(1/a))^{1/\beta} }^\infty\ldots=:I_1(a)+I_2(a).
	$$
	Changing the variable $y=a\eee^{x^\beta}$ yields
	\begin{multline*}
		I_1(a)=\frac{1}{\beta} \int_{\eee a}^1 \frac{(\log (y/a))^{1/\beta-1}}{y(1+y)}\dd y\\=\frac{1}{\beta}
		\Big(\int_{\eee a}^1 \frac{(\log (y/a))^{1/\beta-1}}{y}\dd y-\int_{\eee a}^1 \frac{(\log (y/a))^{1/\beta-1}}{1+y}\dd y\Big)=:I_{1,1}(a)-I_{1,2}(a).
	\end{multline*}
	Plainly, $I_{1,1}(a)=\big(\log(1/a) \big)^{1/\beta}-1$. Further, assuming that $1/\beta$ is not integer,
	\begin{multline*}
		I_{1,2}(a)=\frac{1}{\beta}\Big(\log\frac{1}{a}\Big)^{1/\beta-1}\int_{\eee a}^1\frac{\Big(1+\frac{\log y}{\log(1/a)}\Big)^{1/\beta-1}}{1+y}\,\dd y\\=\frac{1}{\beta} \Big(\log\frac{1}{a}\Big)^{1/\beta-1}\Big(\int_{\eee a}^1\frac{\sum_{n=0}^{\lfloor 1/\beta\rfloor-1} \binom{1/\beta-1}{n}\Big(\frac{\log y}{\log(1/a)}\Big)^n}{1+y}\,\dd y+\int_{\eee a}^1\frac{O\Big(\Big(\frac{\log y}{\log(1/a)}\Big)^{\lfloor 1/\beta\rfloor}\Big)}{1+y}{\rm d}y\Big)
	\end{multline*}
	having utilized the Taylor expansion for the function $x\mapsto (1+x)^{1/\beta-1}$,
	$x\in (-1,1)$. Changing the variable $y=1/z$ in the first integral appearing in Lemma~\ref{lem:coeff} we infer
	$\int_{0}^1\frac{(\log z)^n}{1+z}\,\dd z=(-1)^n c_n $ for $n\in\mn$. Put $c_0:=\log 2$, so that the latter equality also holds for $n=0$. By L'H\^{o}pital's rule, $$\int_0^{a\eee}\frac{(\log y)^n}{1+y}\,\dd y~\sim~(-1)^n\eee a \Big(\log \frac{1}{a}\Big)^n,\quad a\to 0+.$$
	\vspace{0pt}
	Thus,
	\begin{multline*}
		I_{1,2}(a)=\frac{1}{\beta}\sum_{n=0}^{\lfloor 1/\beta\rfloor-1}\binom{1/\beta-1}{n}\Big(\int_0^1 \frac{(\log y)^n}{1+y}\,\dd y-\int_0^{\eee a}\frac{(\log y)^n}{1+y}\,\dd y\Big)\Big(\log\frac{1}{a}\Big)^{1/\beta-1-n}\\+O\Big(\Big(\log \frac{1}{a}\Big)^{\{1/\beta\}-1}\Big)=\frac{1}{\beta}\sum_{n=0}^{\lfloor 1/\beta\rfloor-1}\binom{1/\beta-1}{n}(-1)^n c_n \Big(\log\frac{1}{a}\Big)^{1/\beta-1-n}+O\Big(a\Big(\log \frac{1}{a}\Big)^{1/\beta-1}\Big)\\\!+O\Big(\Big(\log \frac{1}{a}\Big)^{\{1/\beta\}-1}\Big)\!=\!\frac{1}{\beta}\sum_{n=0}^{\lfloor 1/\beta\rfloor-1}\binom{1/\beta-1}{n}(-1)^n c_n \Big(\log\frac{1}{a}\Big)^{1/\beta-1-n}\!+O\Big(\Big(\log \frac{1}{a}\Big)^{\{1/\beta\}-1}\Big).
	\end{multline*}
	If $1/\beta$ is integer, then the second big-oh term is absent, whence
	\begin{equation*}
		I_{1,2}(a)=\frac{1}{\beta}\sum_{n=0}^{1/\beta-1}\binom{1/\beta-1}{n}(-1)^n c_n \Big(\log\frac{1}{a}\Big)^{1/\beta-1-n}+O\Big(a\Big(\log \frac{1}{a}\Big)^{1/\beta-1}\Big).
	\end{equation*}
	Summarizing, as $a\to0+$
	$$I_1(a)=\Big(\log\frac{1}{a}\Big)^{1/\beta}-1-\frac{1}{\beta} \sum_{ n=0}^{\lfloor 1/\beta\rfloor-1}\binom{1/\beta-1}{n}(-1)^nc_n\Big(\log\frac{1}{a}\Big)^{1/\beta-1-n}+g(a).
	$$
	Arguing similarly we obtain $$I_2(a)=\frac{1}{\beta}\sum_{n=0}^{\lfloor 1/\beta\rfloor-1}\binom{1/\beta-1}{n}c_n\Big(\log\frac{1}{a}\Big)^{1/\beta-1-n}+g(a),\quad a\to 0+,$$
	thereby arriving at \eqref{eq:int_exp}.
\end{proof}

\begin{lemma}\label{lem:int2}
	For $\beta\in(0,1)$,
	\begin{multline}\label{eq:int2}
		\int_1^\infty\log\Big(1+\frac{1}{a\eee^{x^\beta}}\Big)\dd x=\frac{\beta}{\beta+1} \Big(\log\frac{1}{a}\Big)^{1+1/\beta}\\+\frac{2}{\beta}\sum_{\text{even }n=0}^{\lfloor 1/\beta\rfloor-1}\binom{1/\beta-1}{n}f_n\Big(\log\frac{1}{a}\Big)^{1/\beta-1-n}-\log\frac{1}{a}+\frac{1}{1+\beta}+h(a),
		\quad a\to0+,
	\end{multline}
	where $f_n=n!(1-2^{-(n+1)})\zeta(n+2)$ for $n\in \mn_0$; $h(a)=O\big((\log (1/a))^{\{1/\beta\}-1}\big)$ as $a\to 0+$ if $1/\beta$ is not integer, and $h(a)=O\big(a(\log (1/a))^{1/\beta-1}\big)$ as $a\to 0+$ if $1/\beta$ is integer.
\end{lemma}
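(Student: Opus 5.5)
The plan mirrors the proof of Lemma \ref{lem:exp}. For $a\in(0,1/\eee)$, put $L:=\log(1/a)$ and split the integral at $x=L^{1/\beta}$, which is the point where $a\eee^{x^\beta}=1$:
$$
\int_1^\infty\log\Big(1+\frac{1}{a\eee^{x^\beta}}\Big)\dd x=\int_1^{L^{1/\beta}}\ldots+\int_{L^{1/\beta}}^\infty\ldots=:J_1(a)+J_2(a).
$$
In $J_1$ I will use the identity $\log(1+1/y)=\log(1/y)+\log(1+y)$, where $y=a\eee^{x^\beta}\le1$. The piece $\int_1^{L^{1/\beta}}(L-x^\beta)\dd x$ is explicit:
$$
L^{1+1/\beta}-L-\frac{L^{1+1/\beta}-1}{\beta+1}=\frac{\beta}{\beta+1}L^{1+1/\beta}-L+\frac{1}{\beta+1}.
$$
This already produces the leading term, the term $-\log(1/a)$ and the constant $1/(1+\beta)$ in \eqref{eq:int2}. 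The remaining pieces are $\int_1^{L^{1/\beta}}\log(1+a\eee^{x^\beta})\dd x$ and $J_2$. In both I change variables $y=a\eee^{x^\beta}$, so that $\dd x=\beta^{-1}(\log(y/a))^{1/\beta-1}y^{-1}\dd y$. This gives
$$
\frac1\beta\int_{\eee a}^1\frac{\log(1+y)}{y}(L+\log y)^{1/\beta-1}\dd y
\quad\text{and}\quad
\frac1\beta\int_1^\infty\frac{\log(1+y^{-1})}{y}(L+\log y)^{1/\beta-1}\dd y .
$$

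Next I factor out $L^{1/\beta-1}$ and Taylor-expand $(1+\log y/L)^{1/\beta-1}$ up to order $\lfloor1/\beta\rfloor-1$. This is exactly as in Lemma \ref{lem:exp}, and the remainder is $O((|\log y|/L)^{\lfloor 1/\beta\rfloor})$. For the $J_2$ piece, $\log y/L$ is unbounded on $(1,\infty)$, so the Taylor remainder must be controlled differently. I would further split at $y=a^{-1}$, i.e.\ where $\log y=L$. On $[1,a^{-1}]$ the binomial remainder bound $|(1+t)^{\gamma}-\sum_{n<m}\binom{\gamma}{n}t^n|\le C t^{m}(1+t)^{\max(\gamma-m,0)}$ holds for $t\in[0,1]$. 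On $y>a^{-1}$ the integrand is bounded by $y^{-2}(2\log y)^{1/\beta-1}$, which is $O(a\,L^{1/\beta-1})$ and therefore negligible.

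The coefficient integrals are supplied by Lemma \ref{lem:coeff}, which gives $\int_1^\infty y^{-1}\log(1+y^{-1})(\log y)^n\dd y=f_n$. Substituting $y\mapsto1/y$ shows $\int_0^1 y^{-1}\log(1+y)(\log y)^n\dd y=(-1)^nf_n$. The lower-limit correction $\int_0^{\eee a}$ is $O(a L^n)$, handled as in Lemma \ref{lem:exp}.

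Summing the two expansions, the coefficient of $L^{1/\beta-1-n}$ becomes $\beta^{-1}\binom{1/\beta-1}{n}(1+(-1)^n)f_n$. This vanishes for odd $n$ and equals $\frac2\beta\binom{1/\beta-1}{n}f_n$ for even $n$, as claimed. The Taylor remainders contribute $L^{1/\beta-1}\cdot L^{-\lfloor1/\beta\rfloor}\cdot O(1)=O(L^{\{1/\beta\}-1})$. When $1/\beta$ is an integer the expansion is a polynomial and hence exact, so only the $O(aL^{1/\beta-1})$ boundary errors survive, giving $h(a)$.

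The main obstacle is the uniform control of the Taylor remainder in $J_2$ over the unbounded range of $\log y$. I would handle it with the extra split at $y=1/a$ described above. There, the weight $y^{-2}$ against at most polynomial growth in $\log y$ gives the needed summability, since $\int_1^\infty y^{-2}(\log y)^m\dd y<\infty$.
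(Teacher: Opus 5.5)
Your proposal is correct and follows the paper's proof essentially step for step: the same split at $x=(\log(1/a))^{1/\beta}$, the same identity $\log(1+1/y)=\log(1+y)-\log y$ producing the explicit piece $\frac{\beta}{\beta+1}L^{1+1/\beta}-L+\frac{1}{\beta+1}$, and the same substitution $y=a\eee^{x^\beta}$, Taylor expansion and use of Lemma~\ref{lem:coeff} (with the even/odd cancellation). You also supply the remainder and boundary estimates that the paper leaves to ``as in Lemma~\ref{lem:exp}''; your extra split at $y=1/a$ is harmless but unnecessary, because $1/\beta-1<m:=\lfloor 1/\beta\rfloor$ and the Lagrange form of the remainder therefore gives the bound $\bigl|\binom{1/\beta-1}{m}\bigr|\,t^{m}$ uniformly for all $t\ge0$.
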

\begin{proof}
	For $a\in (0, 1/\eee)$, we use a representation
	$$
	\int_1^\infty\log\Big(1+\frac{1}{a\eee^{x^\beta}}\Big)\dd x=\int_1^{(\log(1/a))^{1/\beta}}\ldots+\int_{(\log(1/a))^{1/\beta}}^\infty\ldots=:J_1(a)+J_2(a).
	$$
	\noindent Further,
	$$
	J_1(a)=\int_1^{(\log(1/a))^{1/\beta}}\Big(\log(1+a\eee^{x^\beta})-\log(a\eee^{x^\beta})\Big)\dd x=:J_{1,1}(a)+J_{1,2}(a).
	$$
	A direct calculation shows that
	$J_{1,2}(a)=\beta(1+\beta)^{-1}\big(\log(1/a)\big)^{1+1/\beta}-\log 1/a+(1+\beta)^{-1}$. As has been done in the proof of Lemma \ref{lem:exp} for $I_{1,2}(a)$ and $I_2(a)$,
	we obtain asymptotic expansions for $J_{1,1}(a)$ and $J_2(a)$:
	$$J_{1,1}(a)=\frac{1}{\beta}\sum_{n=0}^{\lfloor 1/\beta\rfloor-1}\binom{1/\beta-1}{n}(-1)^nf_n\Big(\log\frac{1}{a}\Big)^{1/\beta-1-n}+h(a),\quad a\to 0+,$$
	$$J_2(a)=\frac{1}{\beta}\sum_{n=0}^{\lfloor 1/\beta\rfloor-1}\binom{1/\beta-1}{n}f_n\Big(\log\frac{1}{a}\Big)^{1/\beta-1-n}+h(a),
	\quad a\to 0+.$$
	Summing up $J_{1,1}(a)$, $J_{1,2}(a)$ and $J_{2}(a)$ yields \eqref{eq:int2}.
\end{proof}

Now we derive asymptotics of three series which appear in the proof of Theorem \ref{thm:main}(b).
\begin{lemma}\label{lem:(b)series1}
	For $\beta\in (0,1)$, as $a\to 0+$,
	\begin{equation*}
		\sum_{k\ge1}\frac{1}{1+a\eee^{k^\beta}}=\Big(\log\frac{1}{a}\Big)^{1/\beta}+\frac{2}{\beta}\sum_{\text{odd }n=1}^{\lfloor 1/\beta\rfloor-1 }\binom{1/\beta-1}{n}c_n\Big(\log\frac{1}{a}\Big)^{1/\beta-1-n}-\frac{1}{2}+t(a),
	\end{equation*}
	where $t(a)=O\big((\log (1/a))^{\max(\{1/\beta\}-1,\,1-1/\beta)
	}\big)$ as $a\to 0+$ if $1/\beta$ is not integer, and $t(a)=O\big((\log (1/a))^{1-1/\beta}\big)$ as $a\to 0+$ if $1/\beta$ is integer, and, as before, $c_n:=n!(1-2^{-n})\zeta(n+1)$ for $n\in\mn$.
\end{lemma}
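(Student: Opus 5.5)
We apply the Euler--Maclaurin formula \eqref{eq:EM2} to $f_a(x):=(1+a\eee^{x^\beta})^{-1}$ on $[1,\infty)$ and then read off the integral from Lemma \ref{lem:exp}. Since $f_a$ is continuously differentiable with $f_a(\infty)=0$,
$$
\sum_{k\ge1}\frac{1}{1+a\eee^{k^\beta}}=\int_1^\infty f_a(x)\dd x+\frac{f_a(1)}{2}+Q_{1,\infty}(a),\qquad |Q_{1,\infty}(a)|\le\frac12\int_1^\infty|f_a^\prime(x)|\dd x.
$$
Because $f_a(1)=(1+a\eee)^{-1}=1-O(a)$, the boundary term equals $1/2+O(a)$. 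By Lemma \ref{lem:exp}, the integral equals $(\log(1/a))^{1/\beta}-1+\frac{2}{\beta}\sum_{\text{odd }n}\ldots+g(a)$. Adding $1/2$ gives the constant $-1/2$ and the displayed main terms. The error $g(a)$, together with the $O(a)$ term, is absorbed into $t(a)$, because $(\log(1/a))^{\{1/\beta\}-1}$ in the noninteger case and $a(\log(1/a))^{1/\beta-1}$ in the integer case are both $O((\log(1/a))^{\max(\{1/\beta\}-1,\,1-1/\beta)})$, respectively $O((\log(1/a))^{1-1/\beta})$.

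The main point is the bound on $Q_{1,\infty}(a)$. Since $f_a^\prime(x)=-\beta x^{\beta-1}a\eee^{x^\beta}(1+a\eee^{x^\beta})^{-2}<0$, $f_a$ is monotone, so $\int_1^\infty|f_a^\prime|=f_a(1)\le 1$. This only gives $O(1)$, which is too crude. The remainder therefore needs to be handled more sharply by isolating the periodic part.

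To do this, use the next order of Euler--Maclaurin, i.e.\ \eqref{eq:EM}. The term $(f_a^\prime(\infty)-f_a^\prime(1))/12$ is $O(a)$. The remainder is bounded by $\frac1{12}\int_1^\infty|f_a^{\prime\prime}(x)|\dd x$. Write
$$
f_a^{\prime\prime}(x)=-\beta(\beta-1)x^{\beta-2}\frac{y}{(1+y)^2}-\beta^2x^{2\beta-2}\frac{y(1-y)}{(1+y)^3},\qquad y=a\eee^{x^\beta}.
$$
Both pieces are of the form treated in Lemma \ref{lem:int}: with $d=\beta-2$, $c=1$, $b=2$, and with $d=2\beta-2$, $c\in\{1,2\}$, $b=3$, after bounding $|1-y|\le 1+y$, i.e.\ reducing to $b=2$, $c=1$. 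Lemma \ref{lem:int} then yields $O((\log(1/a))^{(d+1)/\beta-1})$. For $d=2\beta-2$ this is $(\log(1/a))^{1-1/\beta}$. For $d=\beta-2$ it is $(\log(1/a))^{-1/\beta}$, which is smaller.

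Hence $R_{1,\infty}(a)=O((\log(1/a))^{1-1/\beta})$, which explains the exponent $1-1/\beta$ in $t(a)$.

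The expected obstacle is exactly this remainder estimate: checking that the absolute value does not spoil the order, and that Lemma \ref{lem:int} applies with the needed $b>c$ after splitting $f_a^{\prime\prime}$ into positive pieces. Since Lemma \ref{lem:int} gives exact first-order asymptotics for each positive piece, the triangle inequality suffices. Collecting all terms then proves the lemma in both the integer and noninteger cases.
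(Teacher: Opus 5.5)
Your proposal is correct and matches the paper's proof: after rightly discarding the \eqref{eq:EM2} route, which only yields an $O(1)$ remainder, you apply \eqref{eq:EM} with $m=1$, $n=\infty$, take the integral from Lemma~\ref{lem:exp}, and bound $\int_1^\infty|f_a''(x)|\,{\rm d}x$ termwise via Lemma~\ref{lem:int}, exactly as the paper does. Your decomposition of $f_a''$, using $|1-y|\le 1+y$, is equivalent to the paper's three-term split and gives the same bound $O\big((\log(1/a))^{1-1/\beta}\big)$.
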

\begin{proof}
	For each $a>0$, put $f_a (x)=(1+a\eee^{x^\beta})^{-1}$ for $x\geq 0$. The first two derivatives of $f_a$ are
	$$f^\prime_a(x)=-\frac{\beta ax^{\beta-1}\eee^{x^\beta}}{(1+a\eee^{x^\beta})^2}$$ and $$f^{\prime\prime}_a(x)=\frac{2\beta^2 a^2x^{2(\beta-1)}\eee^{2x^\beta}}{(1+a\eee^{x^\beta})^3}-\frac{\beta^2 ax^{2(\beta-1)}\eee^{x^\beta}}{(1+a\eee^{x^\beta})^2}+\frac{\beta(1-\beta) ax^{\beta-2}\eee^{x^\beta}}{(1+a\eee^{x^\beta})^2}.$$
	
	Since $f_a$ is twice continuously differentiable on $[1,\infty)$ with $f_a(\infty)=f_a^\prime(\infty)=0$, $f_a(1)=(1+\eee a)^{-1}=1+O(a)$ and $f_a^\prime(1)=O(a)$ as $a\to 0+$, an application of formula~\eqref{eq:EM} with $m=1$ and $n=\infty$ yields
	$$
	\sum_{k\ge 1}f_a(k)=\int_1^\infty f_a(x)\dd x+\frac{1}{2}+R_{1,\infty}(a)+O(a),
	$$
	\noindent where $|R_{1,\infty}(a)|\leq (1/12)\int_1^\infty |f^{\prime\prime}_a(x)|{\rm d}x$. The asymptotic expansion for $\int_1^\infty f_a(x){\rm d}x$ can be found in Lemma \ref{lem:exp}. By Lemma \ref{lem:int},
	\begin{multline*}
		\int_1^\infty |f_a^{\prime\prime}(x)|\dd x\leq 2\beta^2\int_1^\infty \frac{ (a\eee^{x^\beta})^2 x^{2(\beta-1)}}{(1+a\eee^{x^\beta})^3}{\rm d}x+\beta^2\int_1^\infty \frac{a\eee^{x^\beta}x^{2(\beta-1)}}{(1+a\eee^{x^\beta})^2}{\rm d}x\\+\beta(1-\beta)\int_1^\infty \frac{a\eee^{x^\beta}x^{\beta-2}}{(1+a\eee^{x^\beta})^2}{\rm d}x=O\Big(\Big(\log \frac{1}{a}\Big)^{1-1/\beta}\Big), \quad a\to0+.
	\end{multline*}
	Combining fragments together completes the proof.
\end{proof}

\begin{lemma}\label{lem:(b)series3}
	For $\beta\in (0,1)$, $$\sum_{k\ge 1}\frac{\eee^{k^\beta}}{(1+a\eee^{k^\beta})^2}~\sim~\frac{1}{\beta a}\Big(\log\frac{1}{a}\Big)^{1/\beta-1},\quad a\to 0+.$$
\end{lemma}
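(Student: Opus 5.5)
We will follow the template of Lemma~\ref{lem:(b)series1}: compare the series with the corresponding integral via the Euler--Maclaurin formula, and evaluate the integral with Lemma~\ref{lem:int}. For each $a>0$ put
\[
f_a(x):=\frac{\eee^{x^\beta}}{(1+a\eee^{x^\beta})^2}=\frac{1}{a}\cdot\frac{a\eee^{x^\beta}}{(1+a\eee^{x^\beta})^2},\qquad x\ge 0.
\]
Lemma~\ref{lem:int} with $c=1$, $b=2$, $d=0$ gives
\[
\int_1^\infty f_a(x)\dd x~\sim~\frac{{\rm B}(1,1)}{\beta a}\Big(\log\frac1a\Big)^{1/\beta-1}=\frac{1}{\beta a}\Big(\log\frac1a\Big)^{1/\beta-1},\quad a\to0+,
\]
which is exactly the claimed main term. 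Note $1/\beta-1>0$, so this term is of order larger than $1/a$.

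It remains to show that the difference between the series and the integral is $o\big(a^{-1}(\log(1/a))^{1/\beta-1}\big)$. We apply \eqref{eq:EM2} with $m=1$ and $n=\infty$. The boundary terms are harmless: $f_a(1)=\eee/(1+\eee a)^2=O(1)$ and $f_a(\infty)=0$. For the remainder we need to bound $\int_1^\infty|f_a^\prime(x)|\dd x$. Differentiating,
\[
f_a^\prime(x)=\beta x^{\beta-1}\eee^{x^\beta}\Big(\frac{1}{(1+a\eee^{x^\beta})^2}-\frac{2a\eee^{x^\beta}}{(1+a\eee^{x^\beta})^3}\Big),
\]
so that
\[
|f_a^\prime(x)|\le \frac{\beta}{a}\Big(\frac{a\eee^{x^\beta}x^{\beta-1}}{(1+a\eee^{x^\beta})^2}+\frac{2(a\eee^{x^\beta})^2x^{\beta-1}}{(1+a\eee^{x^\beta})^3}\Big).
\]
Each of the two terms is of the form treated in Lemma~\ref{lem:int}, with $d=\beta-1$; the second also has $c=2$, $b=3$. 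Applying that lemma to both gives
\[
\int_1^\infty|f_a^\prime(x)|\dd x=O\big(a^{-1}(\log(1/a))^{(d+1)/\beta-1}\big)=O(a^{-1}),
\]
since $(d+1)/\beta-1=0$. This is $o\big(a^{-1}(\log(1/a))^{1/\beta-1}\big)$ because $1/\beta-1>0$.

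Combining the main term, the $O(1)$ boundary terms and the $O(a^{-1})$ remainder yields the asserted equivalence. We do not expect a real obstacle. The only point needing care is to check that every application of Lemma~\ref{lem:int} satisfies its hypothesis $b>c$ ($2>1$ and $3>2$), and that the derivative bound indeed has the exponent $d=\beta-1$, which produces only $(\log(1/a))^0$ and keeps the remainder of strictly lower order.
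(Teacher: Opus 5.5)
Your proposal is correct and follows essentially the same route as the paper: the first-order Euler--Maclaurin formula \eqref{eq:EM2} on $[1,\infty)$, Lemma~\ref{lem:int} with $(c,b,d)=(1,2,0)$ for the main integral, and Lemma~\ref{lem:int} with $d=\beta-1$, $(c,b)\in\{(1,2),(2,3)\}$, to show that $\int_1^\infty|f_a^\prime(x)|\dd x$ is of lower order. The only difference is normalization: the paper takes $f_a$ to be $a$ times your $f_a$, so its boundary and remainder bounds read $O(a)$ and $O(1)$ instead of your $O(1)$ and $O(a^{-1})$.
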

\begin{proof}
	For each $a>0$, put $$f_a(x)=\frac{a\eee^{x^\beta}}{(1+a\eee^{x^\beta})^2},\quad x\geq 0.$$
	Since $f_a$ is continuously differentiable on $[1,\infty)$ with $f_a(1)=O(a)=o(1)$ as $a\to 0+$ and $\lim_{x\to\infty}f_a(x)=0$, an application of formula \eqref{eq:EM2} yields
	$$\sum_{k\ge 1}f_a(k)=\int_1^\infty f_a(x)\dd x+Q_{1,\infty}(a)+o(1),
	$$
	where $|Q_{1,\infty}(a)|\leq (1/2)\int_1^\infty |f^\prime_a(x)|{\rm d}x$.
	Invoking Lemma \ref{lem:int} we obtain, as $a\to0+$
	$$
		\int_1^\infty |f_a^\prime(x)|\dd x \!\leq\!
		\beta\Big(\int_1^\infty\frac{a\eee^{x^\beta} x^{\beta-1}}{(1+a\eee^{x^\beta})^2}\dd x+2\int_1^\infty\frac{(a\eee^{x^\beta})^2 x^{\beta-1}}{(1+a\eee^{x^\beta})^3}\dd x\Big)\!=\!O(1)=o\Big(\Big(\log\frac{1}{a}\Big)^{1/\beta-1}\Big)
	$$
	and also
	$$\int_1^\infty f_a(x)\dd x~\sim~ \frac{1}{\beta}\Big(\log\frac{1}{a}\Big)^{1/\beta-1},$$ which completes the proof.
\end{proof}

\begin{lemma}\label{lem:(b)series2}
	For $\beta\in (0,1)$,
	\begin{multline*}
		\sum_{k\ge 1}\log\Big(1+\frac{1}{a\eee^{k^\beta}}\Big)=\frac{\beta}{1+\beta}\Big(\log\frac{1}{a}\Big)^{1+1/\beta}\\
		+\frac{2}{\beta}\sum_{\text{even }n=0}^{\lfloor 1/\beta\rfloor -1}\binom{1/\beta-1}{n}f_n\Big(\log\frac{1}{a}\Big)^{1/\beta-1-n}-\frac{1}{2}\log\frac{1}{a}-\zeta(-\beta)+o(1),\quad a\to0+,
	\end{multline*}
	where, as before,
	$f_n=n!(1-2^{-(n+1)})\zeta(n+2)$ for $n\in \mn_0$, and $\zeta$ is the Riemann zeta function.
\end{lemma}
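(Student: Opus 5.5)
We apply the Euler–Maclaurin formula \eqref{eq:EM2} to $f_a(x)=\log(1+a^{-1}\eee^{-x^\beta})$ on $[1,\infty)$, take the integral from Lemma \ref{lem:int2}, and then show that the error terms, which at first sight are not small, combine into the constant $-\zeta(-\beta)$. Since $f_a(1)=\log(1+1/(a\eee))=\log(1/a)-1+O(a)$, formula \eqref{eq:EM2} with $m=1$, $n=\infty$ gives
$$\sum_{k\ge1}f_a(k)=\int_1^\infty f_a(x)\dd x+\tfrac12\log\tfrac1a-\tfrac12+Q_{1,\infty}(a)+O(a).$$
Adding the expansion \eqref{eq:int2}, the term $-\log(1/a)$ becomes $-\tfrac12\log(1/a)$, and the even-indexed $f_n$ sum together with the leading term $\frac{\beta}{1+\beta}(\log\frac1a)^{1+1/\beta}$ already match the target. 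The remainder $h(a)$ is $o(1)$ in both cases of Lemma \ref{lem:int2}. It therefore remains to show
$$\tfrac{1}{1+\beta}-\tfrac12+Q_{1,\infty}(a)\to-\zeta(-\beta).$$

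The difficulty is that $Q_{1,\infty}(a)$ is only bounded by $\frac12\int|f_a'|=O(\log(1/a))$, so the crude bound is useless and the remainder must be computed exactly. To do this, write $Q_{1,\infty}(a)=\int_1^\infty f_a'(x)P_1(x)\dd x$ with $P_1(x)=\{x\}-\tfrac12$. Then
$$f_a'(x)=-\frac{\beta x^{\beta-1}}{1+a\eee^{x^\beta}}=-\beta x^{\beta-1}+\beta x^{\beta-1}\frac{a\eee^{x^\beta}}{1+a\eee^{x^\beta}}.$$
This splits $Q_{1,\infty}(a)$ into two pieces.

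The first piece is $-\beta\int_1^\infty x^{\beta-1}P_1(x)\dd x$. It converges conditionally, since $P_1$ has mean zero and is periodic while $x^{\beta-1}$ decreases. Euler–Maclaurin applied to $x\mapsto x^{-s}$ gives the classical representation
$$\zeta(s)=\frac{1}{s-1}+\frac12-s\int_1^\infty x^{-s-1}P_1(x)\dd x,$$
valid by analytic continuation for $\mathrm{Re}\,s>-1$. Setting $s=-\beta$ yields $-\beta\int_1^\infty x^{\beta-1}P_1=\zeta(-\beta)+\frac{1}{1+\beta}-\frac12$. Wait, check the sign: the first piece is $-\beta\int x^{\beta-1}P_1$, while the formula gives $\zeta(-\beta)=-\frac1{1+\beta}+\frac12+\beta\int x^{\beta-1}P_1$. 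Hence the first piece equals $-\zeta(-\beta)-\frac{1}{1+\beta}+\frac12$. Combined with $\frac1{1+\beta}-\frac12$, this gives exactly $-\zeta(-\beta)$.

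The second piece is $\beta\int_1^\infty x^{\beta-1}\sigma_a(x)P_1(x)\dd x$, where $\sigma_a(x)=a\eee^{x^\beta}/(1+a\eee^{x^\beta})$ is a logistic step located near $x_a=(\log(1/a))^{1/\beta}$. Integration by parts against $\widetilde P_2(x)=\int_0^xP_1$, which is bounded and periodic, leaves
$$\int|(x^{\beta-1}\sigma_a)'|\dd x\le\int(1-\beta)x^{\beta-2}\sigma_a\dd x+\int x^{\beta-1}\sigma_a'\dd x.$$
The first integral is $O(x_a^{\beta-1})=o(1)$, since $\sigma_a$ is negligible below $x_a$. The second integral is $\int x^{\beta-1}\sigma_a'$. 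Since $\sigma_a'\dd x$ is a probability-like bump near $x_a$, Lemma \ref{lem:int} with $b=2$, $c=1$ and $d=2\beta-2$ gives $\int x^{2\beta-2}a\eee^{x^\beta}/(1+a\eee^{x^\beta})^2\dd x\sim\beta^{-1}(\log\frac1a)^{1-1/\beta}\to0$. The boundary terms at $x=1$ are $O(a)$. Hence the second piece is $o(1)$.

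The main obstacle is exactly this second-piece estimate together with justifying the conditionally convergent zeta representation; everything else is bookkeeping with Lemma \ref{lem:int2}.
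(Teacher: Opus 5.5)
Your proof is correct, and it takes a genuinely different route from the paper.

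\textbf{The paper's route.} The paper splits the sum at $m_a=\lfloor(\log 1/a)^{1/\beta}\rfloor$. The tail $k\ge m_a+1$ and the piece $\sum_{k\le m_a}\log(1+a\eee^{k^\beta})$ are handled by the second-order formula \eqref{eq:EM}, whose remainders are small on each range. The part $m_a\log(1/a)-\sum_{k\le m_a}k^\beta$ is evaluated by quoting Schumacher's asymptotics for power sums, and that is where $\zeta(-\beta)$ comes from. The price is a bookkeeping step: the boundary contributions $\pm\frac12\log 2$ and the terms involving $\{(\log 1/a)^{1/\beta}\}$ and $m_a^{1+\beta}$ must be shown to cancel.

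\textbf{Your route.} You apply the first-order formula once on $[1,\infty)$, accept that its remainder is of size $\log(1/a)$, and compute it exactly. Removing $-\beta x^{\beta-1}$ from $f_a'$ produces the classical representation $\zeta(s)=\frac{1}{s-1}+\frac12-s\int_1^\infty x^{-s-1}P_1(x)\dd x$ (valid for ${\rm Re}\,s>-1$) at $s=-\beta$. What is left is an integral of $P_1$ against $x^{\beta-1}\sigma_a(x)$, whose transition has width of order $(\log 1/a)^{1/\beta-1}\to\infty$. One integration by parts and Lemma \ref{lem:int} then make it $O((\log 1/a)^{1-1/\beta})$.

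The bookkeeping checks out. You arrive at $\sum_{k\ge1}f_a(k)=\int_1^\infty f_a(x)\dd x+\frac12\log\frac1a-\frac1{1+\beta}-\zeta(-\beta)+o(1)$. This is exactly the identity the paper obtains just before invoking Lemma \ref{lem:int2}.

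\textbf{Comparison.} Your route is shorter and self-contained: no external power-sum theorem and no fractional parts. It does use $\beta<1$ twice: in the range of validity of the $\zeta$ representation, and in the slowness of the logistic transition. For $\beta>1$ the step is sharp, so the second piece is no longer small and depends on the fractional part. That is why a split at $\lfloor(\log1/a)^{1/\beta}\rfloor$, as in Lemma \ref{lem:d2}, is unavoidable there. The paper's proof of the present lemma follows the same template as that case.

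\textbf{Clean-up for the write-up.} Drop the first, wrongly signed version of the $\zeta$ identity and state directly $\beta\int_1^\infty x^{\beta-1}P_1(x)\dd x=\zeta(-\beta)+\frac{1}{1+\beta}-\frac12$. With $\widetilde P_2(x)=\int_0^xP_1$, the boundary term at $x=1$ vanishes exactly. Finally, $\int_1^\infty x^{\beta-2}\sigma_a(x)\dd x\to0$ follows immediately from dominated convergence, which is cleaner than appealing to $\sigma_a$ being negligible below $x_a$.
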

\begin{proof}
	For each $a>0$, put
	$m_a:=\lfloor (\log 1/a)^{1/\beta}\rfloor$ and $$f_{1,a}(x):=\log\Big(1+\frac{1}{a\eee^{x^\beta}}\Big),\quad x\geq 0,$$ and then write
	$$\sum_{k\ge 1}\log\Big(1+\frac{1}{a\eee^{k^\beta}}\Big)=\sum_{k=1}^{m_a}\ldots+\sum_{k\ge m_a+1}\ldots=:S_1(a)+S_2(a).$$ Since $f_{1,a}$ is twice continuously differentiable on $[1,\infty)$ with $\lim_{a\to 0+}f_{1,a}(m_a+1)=\log 2$ and  $\lim_{x\to\infty} f_{1,a}(x)=\lim_{x\to\infty}f_{1,a}^\prime(x)=0$, an application of formula~\eqref{eq:EM} with $m=m_a+1$ and $n=\infty$ yields
	$$
	S_2(a)=\int_{m_a+1}^\infty f_{1,a}(x)\dd x+\frac{\log 2}{2}-\frac{ f^\prime_{1,a}(m_a+1)}{12}+R_{m_a+1,\,\infty}+o(1)$$ as $a\to0+$, where $|R_{m_a+1,\,\infty}|\leq (1/12)\int_{m_a+1}^\infty |f^{\prime\prime}_{1,a}(x)|{\rm d}x$.
	Using
	$f_{1,a}^\prime(x)=-\frac{\beta x^{\beta-1}}{1+a\eee^{x^\beta}}$ we infer
	$|f_{1,a}^\prime(m_a+1)|=
	O((\log 1/a)^{1-1/\beta})=o(1)$ as $a\to0+$. Further,
	$$
	f_{1,a}^{\prime\prime}(x)=-\frac{\beta(\beta-1)x^{\beta-2}}{1+a\eee^{x^\beta}}+\beta^2 \frac{a\eee^{x^\beta}x^{2(\beta-1)}}{(1+a\eee^{x^\beta})^2}=:-I_a(x)+J_a(x).
	$$
	Since as $a\to0+$ $$\int_{m_a+1}^\infty I_a(x)\dd x\le \beta(1-\beta)\int_{(\log 1/a)^{1/\beta}}^\infty x^{\beta-2}\dd x=\beta(\log1/a)^{1-1/\beta}=o(1)$$ and, by Lemma \ref{lem:int}, $\int_{m_a+1}^\infty J_a(x){\rm d}x\leq \int_1^\infty J_a(x){\rm d}x=O((\log 1/a)^{1-1/\beta})=o(1)$, we conclude that $\int_{m_a+1}^\infty |f_{1,a}^{\prime\prime}(x)|\dd x=o(1)$ as $a\to 0+$. Summarizing,
	$$S_2(a)=\int_{m_a+1}^{\infty} f_{1,a}(x)\dd x+\frac{\log 2}{2}+o(1)=\int_{m_a}^{\infty} f_{1,a}(x)\dd x-\frac{\log 2}{2}+o(1),\quad a\to0+$$ because $\lim_{a\to 0+}\int_{m_a}^{m_a+1}f_{1,a}(x){\rm d}x=\log 2$.
	
	Passing to $S_1(a)$ we write
	$$S_1(a)=\sum_{k=1}^{m_a}\log(1+a\eee^{k^\beta})+m_a\log1/a-\sum_{k=1}^{m_a}k^\beta=:C(a)+D(a)-E(a).$$
	By Theorem 14 in \cite{Schumacher:2022}, as $a\to0+$
	\begin{equation*}
		E(a)=\frac{1}{1+\beta}(\log 1/a)^{1+1/\beta}-(\{(\log 1/a)^{1/\beta}\}-1/2)\log 1/a+\zeta(-\beta)+o(1).
	\end{equation*}
	To obtain an asymptotic expansion for $C(a)$,
	put, for each $a>0$,
	$$f_{2,a}(x)=\log\big(1+a\eee^{x^\beta}\big),\quad x\geq 0.$$
	\noindent Since $f_{2,a}$ is twice continuously differentiable on $[1,\infty)$ with $\lim_{a\to 0+}f_{2,a}(m_a)=\log 2$ and $\lim_{a\to 0+} f_{2,a}(1)=0$, an application of formula \eqref{eq:EM} with $m=1$ and $n=m_a$ yields
	$$
	C(a)=\int_1^{m_a}f_{2,a}(x)\dd x+\frac{\log 2}{2} +\frac{f^\prime_{2,a}(m_a)-f^\prime_{2,a}(1)}{12}+R_{1,\,m_a}+o(1)
	$$
	as $a\to0+$, where $|R_{1,\,m_a}|\leq (1/12)\int_1^{m_a}|f^{\prime\prime}_{2,a}(x)|{\rm d}x$. In view of
	$f_{2,a}^\prime(x)=\frac{a\beta x^{\beta-1}\eee^{x^\beta}}{1+a\eee^{x^\beta}}$ we infer $f^\prime_{2,a}(1)=o(1)$ and $f_{2,a}^\prime(m_a)=
	O((\log1/a)^{1-1/\beta})=o(1)$ as $a\to0+$. Further,
	$$
		f_{2,a}^{\prime\prime}(x)=\frac{a\beta^2x^{2(\beta-1)}\eee^{x^\beta}}{1+a\eee^{x^\beta}}-\frac{a\beta (1-\beta)x^{\beta-2}\eee^{x^\beta}}{1+a\eee^{x^\beta}}-\frac{a^2\beta^2x^{2(\beta-1)}\eee^{2x^\beta}}{(1+a\eee^{x^\beta})^2}=:F_a(x)-G_a(x)-H_a(x).
	$$
	By L'H\^opital's rule, as $a\to0+$
	$$
	\int_{1}^{m_a} F_a(x)\dd x\le a\beta^2 \int_{1}^{(\log 1/a)^{1/\beta}} x^{2(\beta-1)}\eee^{x^\beta}\dd x~\sim~\beta (\log 1/a)^{1-1/\beta}=o(1).
	$$
	Since $(1-\beta)^{-1}G_a(x)\le \beta^{-1} F_a(x)$ and $H_a(x)\le F_a(x)$ for $x\ge 1$, we conclude that $\int_{1}^{m_a} |f_{2,a}^{\prime\prime}(x)|\dd x=o(1)$ as $a\to0+$.
	Finally, we have to deal with
	$\int_1^{m_a}f_{2,a}(x)\dd x$.
	To this end, write
	$$
	\int_1^{m_a}f_{2,a}(x)\dd x=\int_1^{m_a}f_{1,a}(x)\dd x +\int_1^{m_a}\log(a\eee^{x^\beta})\dd x
	$$
	and observe that
	$$
	\int_1^{m_a} \log(a\eee^{x^\beta})\dd x= (m_a-1)\log a+\frac{1}{1+\beta}(m_a^{1+\beta}-1).
	$$
	Noting that
	\begin{multline*}
		\frac{1}{1+\beta}\big(m_a^{1+\beta}-(\log 1/a)^{1+1/\beta}\big)+\{(\log 1/a)^{1/\beta}\}\log 1/a\\~\sim~ \frac{\beta}{2} (\{(\log 1/a)^{1/\beta}\})^2 (\log 1/a)^{1-1/\beta}
		=o(1), \quad a\to 0+
	\end{multline*}
	and combining all fragments together we obtain
	$$
	S_1(a)=\int_1^{m_a}f_{1,a}(x)\dd x+ \frac{1}{2}(\log 1/a+\log 2) -\frac{1}{1+\beta}-\zeta(-\beta)+o(1),\quad a\to 0+
	$$
	and thereupon
	$$
		\sum_{k\ge 1}\log\Big(1+\frac{1}{a\eee^{k^\beta}}\Big)=S_1(a)+S_2(a)=\int_1^{\infty}f_{1,a}(x)\dd x+\frac{1}{2}\log 1/a-\frac{1}{1+\beta}-\zeta(-\beta)+o(1).
	$$
	An application of Lemma \ref{lem:int2} completes the proof.
\end{proof}

\subsection{Proof of Theorem \ref{thm:main}(b)}

{\bf Step 1.} Determining $s_n$, the solution to $\psi^{\prime}(s)=n$.

Recalling that $r_k=c\eee^{-k^\beta}$ for $\beta\in (0,1)$ and $k\in\mn$,
we start with
$$
\psi^\prime(s)=\sum_{k\ge 1}\frac{r_k\eee^s}{r_k\eee^s+1-r_k}=\frac{\eee^s}{\eee^s-1}\sum_{k\ge 1}\frac{1}{1+
	(c(\eee^s-1))^{-1}\eee^{k^\beta}},\quad s\in\mr.
$$
Using Lemma \ref{lem:(b)series1} with $a=(c(\eee^s-1))^{-1}$ yields as $s\to\infty$
\begin{multline*}
	\psi^\prime(s)=(\log c+s)^{1/\beta}+\frac{2}{\beta}\sum_{\text{odd }j=1}^{\lfloor 1/\beta\rfloor-1}\binom{1/\beta-1}{j}c_j(\log c+s)^{1/\beta-1-j}-\frac{1}{2}+t(\eee^{-s}/c).
\end{multline*}
For large enough $n$, $s=s_n$ is a solution to the equation $\psi^\prime(s)=n$, that is,
\begin{equation}\label{eq:inter}
	(\log c+s_n)^{1/\beta}+\frac{2}{\beta}\sum_{\text{odd }j=1}^{\lfloor 1/\beta\rfloor-1}\binom{1/\beta-1}{j}c_j(\log c+s_n)^{1/\beta-1-j}-\frac{1}{2}+t(\eee^{-s_n}/c)=n.
\end{equation}
Hence,
\begin{equation}\label{eq:s(b)}
	\log c+s_n =n^\beta(1+\varepsilon(n)),\quad n\to\infty,
\end{equation}
where $\varepsilon$ is a function satisfying $\lim_{n\to\infty}\varepsilon(n)=0$. To determine the asymptotic behavior of $\varepsilon$, we substitute the right-hand side of \eqref{eq:s(b)} into \eqref{eq:inter} and obtain
\begin{equation}\label{eq:eps}
	1=(1+\varepsilon(n))^{1/\beta}+\frac{2}{\beta}\sum_{\text{odd }j=1}^{\lfloor 1/\beta\rfloor-1}\binom{1/\beta-1}{j}c_j\frac{(1+\varepsilon(n))^{1/\beta-1-j}}{n^{\beta(1+j)}}-\frac{1}{2n}+O\big(n^{-\gamma}\big)
\end{equation}
as $n\to\infty$, where $\gamma=\min(1+\beta-\beta\{1/\beta\}, 2-\beta)>1$ if $1/\beta$ is not integer and $\gamma=2-\beta>1$ if $1/\beta$ is integer.

Now we have to find an asymptotic expansion of $\varepsilon$ of length $\ell$, where $\ell=\lfloor (1+\beta)/(2\beta)\rfloor$.
Assume first that $\ell=1$. Then $\varepsilon(n)=O(n^{-1})$ if $\beta\in [1/2, 1)$ and $\varepsilon(n)=O(n^{-2\beta})$ if $\beta\in (1/3, 1/2)$. It follows from \eqref{eq:expon(b)} given below that in both cases $\varepsilon$ gives a negligible contribution to the right-hand side of \eqref{eq:expon(b)}. Assume now that $\ell\geq 2$. Then
\begin{equation*}
	\varepsilon(n)=A_{1,\,\beta}n^{-2\beta}+A_{2,\,\beta}n^{-4\beta}+\ldots+A_{\ell-1,\,\beta}n^{-2(\ell-1)\beta}+O(n^{-1}), \quad n\to\infty
\end{equation*}
if $\beta\in(1/(2\ell), 1/(2\ell-1)]$, and
\begin{equation}\label{eq:eps_precise1}
	\varepsilon(n)=A_{1,\,\beta}n^{-2\beta}+A_{2,\,\beta}n^{-4\beta}+\ldots+A_{\ell-1,\beta}n^{-2(\ell-1)\beta}+O(n^{-2\ell\beta}), \quad n\to\infty
\end{equation}
if $\beta\in(1/(2\ell+1),1/(2\ell)]$. While each of the first $\ell-1$ summands gives a nonzero contribution to the right-hand side of \eqref{eq:expon(b)}, the big-oh terms do not because obviously $O(n^{-1})=o(n^{\beta-1})$ and $O(n^{-2\beta-1})=o(n^{-\beta-1})$ and, whenever $\beta\in((2\ell+1)^{-1},(2\ell)^{-1}]$, $O(n^{-2\ell\beta})=o(n^{\beta-1})$ and $O(n^{-2\beta(\ell+1)})=o(n^{-\beta-1})$. The coefficients~$A_{i,\,\beta}$ are uniquely determined and can be calculated as follows. Substitute in \eqref{eq:eps} the $\beta$-dependent asymptotic expansion for $\varepsilon$, take an appropriate number of terms in the Taylor's expansions of $x\mapsto (1+x)^{1/\beta}$ and $x\mapsto (1+x)^{1/\beta-1-j}$ and then equate to $0$ the coefficients in front of $n^{-2j\beta}$ for $j=1,\ldots, \ell-1$.
For instance, implementing this programme for $n^{-2\beta}$ alone we obtain $A_{1,\,\beta}/\beta+(2/\beta)\big(1/\beta-1\big)c_1=0$, whence
\begin{equation}\label{eq:inter2}
	A_{1,\,\beta}=-2(1/\beta-1)c_1.
\end{equation}
Equating to $0$ the coefficient in front of $n^{-4\beta}$ yields $$\frac{1}{\beta}A_{2,\,\beta}+\binom{1/\beta}{2}(A_{1,\,\beta})^2+\frac{2}{\beta}\Big(\frac{1}{\beta}-1\Big)\Big(\frac{1}{\beta}-2\Big)c_1A_{1,\,\beta}+\frac{2}{\beta}\binom{1/\beta-1}{3}c_3=0.$$ This in combination with \eqref{eq:inter2} enables us to compute $A_{2,\,\beta}$.

\noindent {\bf Step 2.} Asymptotic behavior of $\psi^{\prime\prime}$ and the denominator in \eqref{eq:caseB}.

According to \eqref{eq:var_Y0},
\begin{multline*}
	\psi^{\prime\prime}(s)
	=\frac{\eee^s}{c(\eee^s-1)^2}\sum_{k\ge1}\frac{\eee^{k^\beta}}{(1+
		(c(\eee^s-1))^{-1}\eee^{k^\beta})^2}\\-\frac{\eee^s}{(\eee^s-1)^2}\sum_{k\ge1}\frac{1}{(1+
		(c(\eee^s-1))^{-1}\eee^{k^\beta})^2}=:R_1(s)-R_2(s).
\end{multline*}
By Lemmas \ref{lem:(b)series1} and \ref{lem:(b)series3}, respectively, with $a=(c(\eee^s-1))^{-1}$,
$$R_2(s)
\le \frac{\eee^s}{(\eee^s-1)^2}\sum_{k\ge1}\frac{1}{1+(c(\eee^s-1))^{-1}\eee^{k^\beta} }=O
(\eee^{-s}s^{1/\beta})=o(1)$$
and $R_1(s) \sim \beta^{-1}s^{1/\beta-1}$ as $s\to\infty$. Hence
\begin{equation}\label{eq:psi}
	\psi^{\prime\prime}(s)
	~\sim~ \beta^{-1}s^{1/\beta-1},\quad s\to\infty.
\end{equation}
In particular, \eqref{eq:psi} ensures that we are again in the setting of Theorem \ref{thm:caseB}.

Using \eqref{eq:psi} for the first equivalence below and $s_n\sim n^\beta$, which is a consequence of \eqref{eq:s(b)}, for the second we obtain
\begin{equation}\label{eq:(b)denominator}
	\frac{1}{(2\pi\psi^{\prime\prime}(s_n))^{1/2}}~\sim~\Big(\frac{\beta}{2\pi s_n^{1/\beta-1}}\Big)^{1/2}~\sim~\Big(\frac{\beta}{2\pi n^{1-\beta}}\Big)^{1/2},
	\quad n\to\infty.
\end{equation}

\noindent {\bf Step 3.} Asymptotic behavior of $\psi$ and the numerator in \eqref{eq:caseB}.

By Lemma \ref{lem:(b)series2}, with the same $a=(c(\eee^{s}-1))^{-1}$ as before, as $s\to\infty$
\begin{multline}\label{eq:psi_k(b)}
	\psi(s)=\sum_{k\ge 1}\log\Big(1+\frac{c(\eee^s-1)}{\eee^{k^\beta}}\Big)=\frac{\beta}{1+\beta} (\log c+s)^{1+1/\beta}
	\\+\frac{2}{\beta}\sum_{\text{even }j=0}^{\lfloor 1/\beta\rfloor-1}\binom{1/\beta-1}{j}f_j (\log c+s)^{1/\beta-1-j}-\frac{1}{2} (\log c+s) -\zeta(-\beta)+o(1).
\end{multline}
In view of \eqref{eq:psi_k(b)}, \eqref{eq:s(b)} and $\psi^\prime(s_n)=n$, the numerator in \eqref{eq:caseB} is
\begin{multline*}
	\exp(\psi(s_n)-s_n\psi^\prime(s_n))
	=\exp\Big(\frac{\beta n^{1+\beta}}{1+\beta}(1+\varepsilon(n))^{1+1/\beta}\\+\frac{2}{\beta}\sum_{\text{even }j=0}^{\lfloor 1/\beta\rfloor -1}\binom{1/\beta-1}{j}f_jn^{1-\beta(1+j)}(1+\varepsilon(n))^{1/\beta-1-j}-\frac{n^\beta}{2}(1+\varepsilon(n))-\zeta(-\beta)\\
	-n^{1+\beta}(1+\varepsilon(n))+n\log c+o(1)\Big),\quad n\to\infty.
\end{multline*}
Using $$\frac{\beta n^{1+\beta}}{1+\beta}(1+(1+1/\beta)\varepsilon(n))-n^{1+\beta}(1+\varepsilon(n))=-\frac{1}{1+\beta}n^{1+\beta}$$ and $$n^{\beta}\varepsilon(n)=n^{\beta}O(n^{-\min(1,2\beta)})=o(1),\quad n\to\infty$$
we obtain
\begin{multline}\label{eq:expon(b)}
	\exp(\psi(s_n)-s_n\psi^\prime(s_n))
	=\exp\Big(-\frac{1}{1+\beta}n^{1+\beta}+\frac{\beta n^{1+\beta}}{1+\beta}\sum_{i\ge2}\binom{1+1/\beta}{i}(\varepsilon(n))^i\\+\frac{2}{\beta}\sum_{\text{even }j=0}^{\lfloor 1/\beta\rfloor -1}\binom{1/\beta-1}{j}f_jn^{1-\beta(1+j)}(1+\varepsilon(n))^{1/\beta-1-j}-\frac{n^\beta}{2}-\zeta(-\beta)
	+n\log c+o(1)\Big)
\end{multline}
as $n\to\infty$ (if $1/\beta$ is integer, the summation in the first sum extends over $i\leq 1+1/\beta$).
Put
\begin{equation*}
	\alpha_{\beta}(u):=A_{1,\,\beta}u^{-2\beta}+A_{2,\,\beta}u^{-4\beta}+\ldots+A_{\ell-1,\,\beta}u^{-2(\ell-1)\beta},\quad u>0
\end{equation*}
whenever $\ell\geq 2$ and $\alpha_{\beta}(u):=0$ if $\ell=1$. Then
\begin{multline}\label{eq:expon1(b)}
	\exp(\psi(s_n)-s_n\psi^\prime(s_n))\!=\!
	\exp\Big(-\frac{1}{1+\beta}n^{1+\beta}+\frac{\beta n^{1+\beta}}{1+\beta}\sum_{i=2}^{\ell}\binom{1+1/\beta}{i}(\alpha_\beta(n))^i+n\log c-\frac{1}{2}n^\beta\\+\frac{2}{\beta}\sum_{\text{even }j=0}^{\lfloor 1/\beta\rfloor-1}\binom{1/\beta-1}{j}f_jn^{1-\beta(1+j)}\sum_{i_j=0}^{\ell-j/2-1}\binom{1/\beta-1-j}{i_j}(\alpha_\beta(n))^{i_j}-\zeta(-\beta)
	+o(1)\Big)
\end{multline}
as $n\to\infty$. Indeed, $$\lim_{n\to\infty}n^{1+\beta}\sum_{i\geq \ell+1}\binom{1+1/\beta}{i}(\alpha_\beta(n))^i=0$$
\noindent in view of $2(\ell+1)\beta>1+\beta$ which is a consequence of the equality $\ell=\lfloor (1+\beta)/(2\beta) \rfloor$. Since $2\beta(\ell-j/2)>1-\beta(1+j)$ for all $j\in\mn_0$, we also infer $$\lim_{n\to\infty}n^{1-\beta(1+j)}\sum_{i_j\geq \ell-j/2}\binom{1/\beta-1-j}{i_j}(\alpha_\beta(n))^{i_j}=0$$ for even $j\in[0,\lfloor 1/\beta\rfloor-1]$.

For instance, let $\beta\in(1/3,1)$. Then $\ell=1$,
$\alpha_\beta(u)=0$ for $u>0$ and thereupon
$$
	\exp(\psi(s_n)-s_n\psi^\prime(s_n))=
	\exp\Big(-\frac{1}{1+\beta}n^{1+\beta}+n\log c-\frac{1}{2}n^\beta +\frac{2}{\beta}f_0n^{1-\beta}-\zeta(-\beta)
	+o(1)\Big)
$$
as $n\to\infty$. Now, let $\beta\in(1/4,1/3]$. Then $\ell=2$, $\alpha_\beta(n)=A_{1,\,\beta}n^{-2\beta}=-2(1/\beta-1)c_1n^{-2\beta}$ (the value of $A_{1,\,\beta}$ is given in \eqref{eq:inter2}) and
\begin{multline*}
	\exp(\psi(s_n)-s_n\psi^\prime(s_n))=
	\exp\Big(-\frac{1}{1+\beta}n^{1+\beta}+n\log c-\frac{1}{2}n^\beta+\frac{2}{\beta}f_0n^{1-\beta}\\
	+\frac{1}{\beta}\Big(\frac{1}{\beta}-1\Big)\Big(2\Big(\frac{1}{\beta}-1\Big)c_1^2-4\Big(\frac{1}{\beta}-1\Big)c_1f_0+\Big(\frac{1}{\beta}-2\Big)f_2
	\Big)n^{1-3\beta}-\zeta(-\beta)
	+o(1)\Big)
\end{multline*}
as $n\to\infty$.

\noindent {\bf Step 4.} End of the proof.

By Theorem \ref{thm:caseB}, \eqref{eq:(b)denominator} together with \eqref{eq:expon1(b)} yields as $n\to\infty$
\begin{multline*}
	\mmp\{Y\ge n\}~\sim~\mmp\{Y=n\}~\sim~\Big(\frac{\beta}{2\pi n^{1-\beta}}\Big)^{1/2}
	\exp\Big(-\frac{1}{1+\beta}n^{1+\beta}\\+\frac{\beta n^{1+\beta}}{1+\beta}\sum_{i=2}^\ell\binom{1+1/\beta}{i}(\alpha_\beta(n))^i+n\log c-\frac{1}{2}n^\beta\\+\frac{2}{\beta}\sum_{\text{even }j=0}^{\lfloor 1/\beta\rfloor-1}\binom{1/\beta-1}{j}f_jn^{1-\beta(1+j)}\sum_{i_j=0}^{\ell-j/2-1}\binom{1/\beta-1-j}{i_j}(\alpha_\beta(n))^{i_j}-\zeta(-\beta)
	\Big).
\end{multline*}

\subsection{Auxiliary results for $r_k=c\exp(-k)$}

We start by pointing out some properties of a function that plays a role in Lemmas \ref{lem:beta1} and \ref{lem:beta2} below.

\begin{lemma}\label{lem:h}
	Define a function $h:[0,1]\mapsto\mr$ via
	\begin{equation}\label{eq:h}
		h(b):=\sum_{k\ge0}\frac{1}{1+\eee^{k-b}}-\sum_{k\ge1}\frac{1}{1+\eee^{k+b}}-b-1/2.
	\end{equation}
	Then (i) $h$ is not a constant; (ii) $|h(b)|<2\cdot10^{-7}$ for all $b\in[0,1]$.
\end{lemma}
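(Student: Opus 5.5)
The plan is to recognize $h$ as a rapidly converging Fourier series via the Poisson summation formula. First I will rewrite $h$ as a lattice sum of a single function. Put $g(x):=\frac{1}{1+\eee^{x}}-\1_{\{x<0\}}$ for $x\in\mr$, which is odd and decays like $\eee^{-|x|}$. Using $\frac{1}{1+\eee^{k+b}}=1-\frac{1}{1+\eee^{-k-b}}$ for the second sum in \eqref{eq:h}, and separating the term $k=0$ of the first sum (where $k-b=-b<0$), I expect to get, for $b\in(0,1)$,
$$
h(b)=\sum_{k\in\mathbb{Z}}g(k-b)+\frac12-b=:G(b)+\frac12-b .
$$
The endpoints $b\in\{0,1\}$ then follow by continuity of $h$, which is clear since both series in \eqref{eq:h} converge uniformly on $[0,1]$.

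Next I will compute the Fourier coefficients of $g$ and apply Poisson summation to the $1$-periodic function $G$. In the distributional sense,
$$
g^\prime(x)=-\frac{1}{4\cosh^2(x/2)}+\delta_0(x).
$$
Combining this with the classical transform $\int_\mr \frac{\eee^{-2\pi\ii m x}}{4\cosh^2(x/2)}\dd x=\frac{2\pi^2 m}{\sinh(2\pi^2 m)}$ (value $1$ at $m=0$) gives, for $m\neq0$,
$$
\hat g(m)=\frac{1}{2\pi\ii m}\Big(1-\frac{2\pi^2m}{\sinh(2\pi^2m)}\Big),\qquad \hat g(0)=0,
$$
where $\hat g(0)=0$ because $g$ is odd. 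On the other side, the sawtooth has the Fourier series $\frac12-b=\sum_{m\ne0}\frac{\eee^{2\pi\ii mb}}{2\pi\ii m}$ on $(0,1)$. The $\delta_0$ contribution in $G$ should cancel this exactly, modulo the sign convention $k-b$ versus $b-k$, which I will fix carefully. What remains is
$$
h(b)=-\sum_{m\ge1}\frac{2\pi}{\sinh(2\pi^2m)}\sin(2\pi mb),
$$
possibly up to an overall sign. This remainder is a smooth, absolutely convergent series, so Poisson summation is justified for it, while the sawtooth part is handled by its $L^2$ Fourier series together with the continuity of $h$.

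The two claims then follow at once. For (ii),
$$
|h(b)|\le \sum_{m\ge1}\frac{2\pi}{\sinh(2\pi^2 m)}\le 4\pi\,\eee^{-2\pi^2}(1+\eee^{-2\pi^2}+\ldots)\approx 3.4\cdot10^{-8}<2\cdot10^{-7}.
$$
For (i), the $m=1$ coefficient $2\pi/\sinh(2\pi^2)$ is nonzero, so by uniqueness of Fourier coefficients $h$ is not constant. More concretely, $h(1/4)$ and $h(3/4)$ have opposite signs, since the $m\ge2$ terms are smaller by a factor of about $\eee^{-2\pi^2}$.

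The main obstacle is the Fourier step. It requires the correct transform of $\operatorname{sech}^2$, with the right constant $2\pi^2m/\sinh(2\pi^2 m)$, and careful bookkeeping of signs and conventions so that the jump of $g$ at $0$ cancels the sawtooth exactly. If the distributional argument proves awkward, I would instead apply Poisson summation to the smooth function $x\mapsto \frac{1}{1+\eee^{x}}-\frac{1}{1+\eee^{x/\varepsilon}}$, or simply compute $\hat g(m)$ directly by integrating by parts on $(-\infty,0)$ and $(0,\infty)$ separately, where the boundary terms produce the $\frac{1}{2\pi\ii m}$ part.
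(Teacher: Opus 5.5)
Your argument is correct, and it takes a genuinely different route from the paper's. The paper never identifies $h$ explicitly.

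\begin{itemize}
\item For (i), it shows $h'(\log 2)<0$ by summing $18$ terms of the series for $h'$ to accuracy about $10^{-8}$ and comparing with $7/9$.
\item For (ii), it combines $h(0)=0$ with Cauchy--Schwarz, $|h(x)|^2\le\int_0^1 (h'(b))^2\,\dd b$. It evaluates this integral in closed form via telescoping sums and then checks numerically that it is below $3\cdot 10^{-14}$. This again relies on cancellation against $5/6$ in the fourteenth decimal place.
\end{itemize}

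Your Poisson-summation identity replaces all of this high-precision arithmetic with an exact formula, which buys several things:
\begin{itemize}
\item it explains why $h$ is of order $4\pi\eee^{-2\pi^2}$;
\item it gives the sharper bound $\sup|h|<3.4\cdot10^{-8}$, instead of $\sqrt3\cdot 10^{-7}$;
\item it makes (i) a one-line consequence of a single nonzero Fourier coefficient;
\item it makes the constant $h_2(1/2)=\sum_{m\ \mathrm{odd}}\frac{2}{m\sinh(2\pi^2 m)}$ appearing in Theorem~\ref{thm:main}(c) explicit.
\end{itemize}
The paper's route needs only calculus, but its conclusions rest on numerics that are hard to check by hand.

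Three small corrections follow.

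\textbf{The sign.} The overall sign comes out as $+$. Since $\sum_k g(k-b)=\sum_m \hat g(m)\eee^{-2\pi\ii mb}$, the jump part gives $\sum_{m\ne0}\frac{\eee^{-2\pi\ii mb}}{2\pi\ii m}=-(\frac12-b)$. This cancels the sawtooth, and the remainder is
\[
h(b)=\sum_{m\ge1}\frac{2\pi}{\sinh(2\pi^2m)}\sin(2\pi mb).
\]
This agrees with the paper's $h'(\log 2)<0$, because $\cos(2\pi\log 2)<0$. Neither (i) nor (ii) depends on the sign.

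\textbf{The numerical bound.} Since $\frac{2\pi}{\sinh(2\pi^2m)}=\frac{4\pi\eee^{-2\pi^2m}}{1-\eee^{-4\pi^2m}}$ slightly exceeds $4\pi\eee^{-2\pi^2m}$, your displayed chain is not literally an upper bound. The bound $\frac{4\pi}{1-\eee^{-4\pi^2}}\sum_{m\ge1}\eee^{-2\pi^2m}$ is valid and gives the same numerical value.

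\textbf{The justification.} The cleanest way to avoid pointwise Poisson summation for the discontinuous $g$ is as follows. By Fubini (using $g\in L^1$), the $m$th Fourier coefficient of $G$ on $(0,1)$ equals $\hat g(-m)$. Adding the sawtooth coefficients, $h$ has coefficients $\frac{\pi}{\ii\sinh(2\pi^2 m)}$ for $m\ne0$ and $0$ for $m=0$. These are absolutely summable, so the series converges uniformly to a continuous function equal to $h$ almost everywhere. By continuity, the two agree everywhere on $[0,1]$.
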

\begin{proof}
	We first prove (i). The series $\sum_{k\ge0}\frac{\eee^{k-b}}{(1+\eee^{k-b})^2}$ and $\sum_{k\ge1}\frac{\eee^{k+b}}{(1+\eee^{k+b})^2}$ converge uniformly on $[0,1]$. Hence, to obtain a formula for $h^\prime$ we can differentiate termwise the series representing $h$:
	$$h^\prime(b)=\sum_{k\ge0}\frac{\eee^{k-b}}{(1+\eee^{k-b})^2}+\sum_{k\ge1}\frac{\eee^{k+b}}{(1+\eee^{k+b})^2}-1,\quad b\in [0,1].$$ It suffices to show that $h^\prime(b_0)\neq 0$ for some $b_0\in [0,1]$.
	Take $b_0=\log 2$. Then
	$$
	h^\prime(\log 2)=\sum_{k\ge0}\frac{\eee^k/2}{(1+\eee^k/2)^2}+\sum_{k\ge1}\frac{2\eee^k}{(1+2\eee^k)^2}-1.
	$$
	Since
	$$
	\sum_{k\ge19}\Big(\frac{\eee^k/2}{(1+\eee^k/2)^2}+\frac{2\eee^k}{(1+2\eee^k)^2}\Big)\le 5/2\sum_{k\ge19}\eee^{-k}<5\eee^{-19}<3\cdot10^{-8}
	$$
	and
	$$
	\sum_{k=1}^{18}\Big(\frac{\eee^k/2}{(1+\eee^k/2)^2}+\frac{2\eee^k}{(1+2\eee^k)^2}\Big)<0.\underbrace{7\ldots7}_{7 \text{ times}},
	$$
	we infer
	$$
	h^\prime(\log 2)<3\cdot10^{-8}+0.\underbrace{7\ldots7}_{7 \text{ times}}-7/9<3\cdot10^{-8}-7\cdot10^{-8}<0.
	$$
	
	Now we prove (ii). Noting that $h(0)=0$ we shall use an inequality: for $x\in[0,1]$
	\begin{equation}\label{eq:der_ineq}
		|h(x)|^2=|h(x)-h(0)|^2=\Big|\int_0^xh^\prime(b)\dd b\Big|^2\le x\int_0^x\big(h^\prime(b)\big)^2\dd b\le \int_0^1\big(h^\prime(b)\big)^2\dd b,
	\end{equation}
	where the first inequality follows by an application of the Cauchy–Schwarz inequality. Calculating $\int_0^1\big(h^\prime(b)\big)^2\dd b$ is mostly routine. The only nontrivial action is to notice
	that all sums (including double sums) of integrals are telescopic. The obtained result is
	$$
	\int_0^1\big(h^\prime(b)\big)^2\dd b=-\frac{5}{6}+2\sum_{m\ge1}\frac{m\eee^m(\eee^m+1)}{(\eee^m-1)^3}-4\sum_{m\ge1}\frac{\eee^m}{(\eee^m-1)^2}.
	$$
	With this at hand, we proceed as follows:
	$$
	2\sum_{m\ge100}\frac{m\eee^m(\eee^m+1)}{(\eee^m-1)^3}\le2\sum_{m\ge100}\frac{\eee^m\eee^{11m/10}}{(\eee^{9m/10})^3}=2\frac{\eee^{-60}}{1-\eee^{-0.6}}<3\cdot10^{-26}
	$$
	and
	$$
	2\sum_{m=1}^{99}\frac{m\eee^m(\eee^m+1)}{(\eee^m-1)^3}-4\sum_{m=1}^{99}\frac{\eee^m}{(\eee^m-1)^2}<0.8\underbrace{3\ldots3}_{12 \text{ times}}6,
	$$
	\noindent whence
	$$
	\int_0^1\big(h^\prime(b)\big)^2\dd b<3\cdot10^{-26}+0.8\underbrace{3\ldots3}_{12 \text{ times}}6-5/6<3\cdot10^{-14}.
	$$
	Invoking inequality \eqref{eq:der_ineq} completes the proof.
\end{proof}

Lemmas \ref{lem:beta1} and \ref{lem:beta2} provide asymptotic expansions for functional series which appear in the proof of Theorem \ref{thm:main}(c).

\begin{lemma}\label{lem:beta1}
	With the function $h$ as defined in \eqref{eq:h},
	$$
	\sum_{k\ge1} \frac{1}{1+a\eee^k}=\log(1/a)-1/2+h(\{\log(1/a)\})+O(a),\quad a\to0+.
	$$
\end{lemma}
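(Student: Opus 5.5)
Write $L:=\log(1/a)$, $m:=\lfloor L\rfloor$ and $b:=\{L\}\in[0,1)$, so that $a=\eee^{-m-b}$ and $a\eee^k=\eee^{k-m-b}$. The plan is to re-index the series around the transition index $k\approx L$, where $a\eee^k$ passes through $1$. Writing $k=m-j$ for $j=0,1,\ldots,m-1$ and $k=m+j$ for $j\ge1$, we get
$$
\sum_{k\ge1}\frac{1}{1+a\eee^k}=\sum_{j=0}^{m-1}\frac{1}{1+\eee^{-j-b}}+\sum_{j\ge1}\frac{1}{1+\eee^{j-b}}.
$$
The first sum is the part $k\le m$, where the terms are close to $1$; the second is the part $k>m$, where the terms are small.

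For the first sum, use $\frac{1}{1+\eee^{-j-b}}=1-\frac{1}{1+\eee^{j+b}}$. This gives
$$
\sum_{j=0}^{m-1}\frac{1}{1+\eee^{-j-b}}=m-\sum_{j=0}^{m-1}\frac{1}{1+\eee^{j+b}}=m-\sum_{j\ge0}\frac{1}{1+\eee^{j+b}}+\sum_{j\ge m}\frac{1}{1+\eee^{j+b}}.
$$
The tail satisfies $\sum_{j\ge m}\frac{1}{1+\eee^{j+b}}\le \sum_{j\ge m}\eee^{-j-b}=\frac{\eee^{-m-b}}{1-\eee^{-1}}=O(a)$. The error term is therefore $O(a)$, uniformly in $b$.

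Now combine the pieces. In the second sum, shift the index by one ($j=i+1$) so that it reads $\sum_{i\ge0}\frac{1}{1+\eee^{i-(b-1)}}$; alternatively, match it directly with the first sum in \eqref{eq:h}, treating the $j=0$ term separately. The series becomes
$$
m+\frac{1}{1+\eee^{-b}}-\frac{1}{1+\eee^{b}}+\sum_{j\ge1}\frac{1}{1+\eee^{j-b}}-\sum_{j\ge1}\frac{1}{1+\eee^{j+b}}+O(a),
$$
because $-\sum_{j\ge0}\frac{1}{1+\eee^{j+b}}=-\frac{1}{1+\eee^{b}}-\sum_{j\ge1}\frac{1}{1+\eee^{j+b}}$. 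Since $\frac{1}{1+\eee^{-b}}+\sum_{j\ge1}\frac{1}{1+\eee^{j-b}}=\sum_{k\ge0}\frac{1}{1+\eee^{k-b}}$, the definition \eqref{eq:h} gives
$$
\sum_{k\ge0}\frac{1}{1+\eee^{k-b}}-\sum_{k\ge1}\frac{1}{1+\eee^{k+b}}=h(b)+b+\tfrac12.
$$
The total is therefore $m+h(b)+b+\tfrac12-\frac{1}{1+\eee^{b}}+O(a)$.

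This does not yet match the target $L-\tfrac12+h(b)$, and the discrepancy $1-\frac{1}{1+\eee^b}$ must be resolved. The doubled term $-\frac{1}{1+\eee^b}$ appears because I split off $j=0$ incorrectly. Redoing the split with $k\le m$ corresponding to $j=0,\ldots,m-1$ via $k=m-j$, the $k=m$ term $\frac{1}{1+\eee^{-b}}$ is the $j=0$ term of the first sum. The $-\frac{1}{1+\eee^{b}}$ comes from the complement identity applied at $j=0$, and it should instead be placed inside $-\sum_{j\ge0}$, so that it is not counted twice.

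The clean bookkeeping is as follows. Keep the first sum in complemented form, $m-\sum_{j=0}^{m-1}\frac{1}{1+\eee^{j+b}}$, and extend the lower index: $\sum_{j=0}^{m-1}=\sum_{j\ge1}+\frac{1}{1+\eee^{b}}-O(a)$. Also use $\frac{1}{1+\eee^{b}}=1-\frac{1}{1+\eee^{-b}}$. Together these yield
$$
m-1+\sum_{k\ge0}\frac{1}{1+\eee^{k-b}}-\sum_{k\ge1}\frac{1}{1+\eee^{k+b}}+O(a)=m-1+h(b)+b+\tfrac12+O(a).
$$
Since $m+b=L$, this equals $L-\tfrac12+h(b)+O(a)$, which is the claim.

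The only delicate point is this index bookkeeping at $k=m$. The $O(a)$ bound needs no further work: it is a single geometric tail, uniform in $b\in[0,1)$.
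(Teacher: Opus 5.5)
Your final bookkeeping is correct and is essentially the paper's proof. Like the paper, you split at $m=\lfloor\log(1/a)\rfloor$, complement the lower block via $\frac{1}{1+\eee^{-x}}=1-\frac{1}{1+\eee^{x}}$, extend to an infinite sum at the cost of a geometric tail that is $O(a)$ uniformly in $b$, and recognize $h(b)+b+\tfrac12$.

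You should delete the first combined display, which counts the $k=m$ term $\frac{1}{1+\eee^{-b}}$ twice. The paper avoids this by putting $k=m$ into the upper block, so that $\sum_{k\ge m}$ is exactly $\sum_{k\ge0}\frac{1}{1+\eee^{k-b}}$ and only $1\le k\le m-1$ needs complementing.
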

\begin{proof}
	For fixed $a\in(0,1)$, put
	$n:=\lfloor\log(1/a)\rfloor$ and $b:=\{\log(1/a)\}$. Then
	\begin{align*}
		\sum_{k\ge1} \frac{1}{1+a\eee^k}&=\sum_{k\ge1} \frac{1}{1+\eee^{k-n-b}}=\sum_{k\ge0} \frac{1}{1+\eee^{k-b}}+\sum_{k=1}^{n-1} \frac{1}{1+\eee^{k-n-b}}\\&=\sum_{k\ge0} \frac{1}{1+\eee^{k-b}}+n-1-\sum_{k=1}^{n-1} \frac{\eee^{-k-b}}{1+\eee^{-k-b}}\\&=\sum_{k\ge0} \frac{1}{1+\eee^{k-b}}+n-1-\sum_{k\ge1} \frac{1}{1+\eee^{k+b}}+\sum_{k\ge n} \frac{1}{1+\eee^{k+b}}\\&=n-1+h(b)+b+1/2+\sum_{k\ge n} \frac{1}{1+\eee^{k+b}}.
	\end{align*}
	It remains to note that $\sum_{k\ge n} \frac{1}{1+\eee^{k+b}}\le\sum_{k\ge n} \eee^{-k-b}<2\eee^{-n-b}=O(a)$ as $a\to0+$.
\end{proof}

\begin{lemma}\label{lem:beta2}
	With $h_2$ given by $h_2(b):=\int_0^bh(x)\dd x$ for $b\in [0,1]$, where $h$ is as defined in \eqref{eq:h}, as $a\to0+$
	$$
	\sum_{k\ge1} \log\Big(1+\frac{1}{a\eee^k}\Big)=\big(\log(1/a)\big)^2/2-\log(1/a)/2+c_1+h_2(\{\log(1/a)\})+O(a),
	$$
	where $c_1:=\log 2+2\sum_{m\ge1}\log(1+\eee^{-m})$.
\end{lemma}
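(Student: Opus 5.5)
Write $L:=\log(1/a)$, $n:=\lfloor L\rfloor$ and $b:=\{L\}$, so that $a\eee^k=\eee^{k-n-b}$. We mimic the proof of Lemma \ref{lem:beta1}, or alternatively integrate that lemma.

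\textbf{Route via integration (preferred).} Put
$$F(a):=\sum_{k\ge1}\log\big(1+1/(a\eee^k)\big).$$
Differentiating termwise gives
$$-aF^\prime(a)=\sum_{k\ge1}\frac{1}{1+a\eee^k}.$$
In the variable $L$ this reads
$$\frac{\dd}{\dd L}F=\sum_{k\ge1}\frac{1}{1+\eee^{k-L}}=L-\tfrac12+h(\{L\})+O(\eee^{-L})$$
by Lemma \ref{lem:beta1}. Integrating from a fixed $L_0$ to $L$ yields
$$F=\frac{L^2}{2}-\frac{L}{2}+\int_{L_0}^L h(\{x\})\,\dd x+\text{const}+O(\eee^{-L}).$$

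Next we handle the integral of the periodic function. We have
$$\int_0^L h(\{x\})\,\dd x=n\,h_2(1)+h_2(b).$$
The term $n\,h_2(1)$ is linear in $L$ up to a bounded periodic correction, so we must show $h_2(1)=\int_0^1 h=0$. Otherwise the stated formula would fail, since no term linear in $\lfloor L\rfloor$ appears there.

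\textbf{Proof that $h_2(1)=0$.} This is the main obstacle, together with identifying the constant. I would do both by the direct computation below rather than by integration, because the direct computation gives the constant explicitly.

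\textbf{Direct computation.} Split the sum at $k=n$:
$$F=\sum_{k\le n}\big[(n+b-k)+\log(1+\eee^{k-n-b})\big]+\sum_{k>n}\log(1+\eee^{n+b-k}).$$
The first part equals
$$n(n+b)-\frac{n(n+1)}{2}+\sum_{j=0}^{n-1}\log(1+\eee^{-j-b}).$$
The second part equals $\sum_{j\ge1}\log(1+\eee^{b-j})$. Up to an error of $O(\eee^{-n})=O(a)$, the finite sum is replaced by $\sum_{j\ge0}\log(1+\eee^{-j-b})$.

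With $n=L-b$, the polynomial part is
$$\frac{L^2}{2}-\frac{L}{2}-\frac{b^2}{2}+\frac{b}{2}.$$
Hence
$$F=\frac{L^2}{2}-\frac{L}{2}+G(b)+O(a),\qquad G(b):=\sum_{j\ge0}\log(1+\eee^{-j-b})+\sum_{j\ge1}\log(1+\eee^{b-j})-\frac{b^2}{2}+\frac{b}{2}.$$

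\textbf{Identifying $G$.} At $b=0$,
$$G(0)=\log 2+2\sum_{m\ge1}\log(1+\eee^{-m})=c_1.$$
Differentiating termwise (the series converge uniformly on $[0,1]$) gives
$$G^\prime(b)=-\sum_{j\ge0}\frac{1}{1+\eee^{j+b}}+\sum_{j\ge1}\frac{1}{1+\eee^{j-b}}-b+\frac12.$$
Now use $\frac{1}{1+\eee^{j-b}}=1-\frac{1}{1+\eee^{b-j}}$. Shifting indices and comparing with \eqref{eq:h} (after the substitution $k-b\leftrightarrow$ the relevant terms) should show $G^\prime=h$. Concretely, one checks that $G^\prime(b)-h(b)$ is a telescoping combination equal to $0$; if a sign convention mismatch appears, it is resolved with the identity $\frac{1}{1+\eee^{x}}+\frac{1}{1+\eee^{-x}}=1$. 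Then
$$G(b)=c_1+h_2(b),$$
which proves the lemma with error $O(a)$.

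The delicate points are this bookkeeping identity $G^\prime=h$ and uniform control of the tails, which are bounded by $\sum_{j\ge n}\eee^{-j}=O(a)$.
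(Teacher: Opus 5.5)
Your direct computation is correct and is essentially the paper's proof: you split the sum at $n=\lfloor\log(1/a)\rfloor$, apply $\log(1+\eee^{x})=x+\log(1+\eee^{-x})$ to the large terms, absorb the tail $\sum_{j\ge n}\log(1+\eee^{-j-b})$ into $O(a)$, and identify the $b$-dependent remainder by its value $c_1$ at $b=0$ and its derivative $h$; after regrouping the $j=0$ term, your $G$ is the paper's $f(b)-b^2/2-b/2$. The step you hedged on is an immediate cancellation rather than a telescoping argument: the sums over $j\ge1$ in $G^\prime$ cancel termwise against those in \eqref{eq:h}, leaving $G^\prime(b)-h(b)=1-\frac{1}{1+\eee^{b}}-\frac{1}{1+\eee^{-b}}=0$, and the integration route, together with its need for $h_2(1)=0$, can simply be dropped.
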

\begin{proof}
	As in Lemma \ref{lem:beta1}, fix $a\in(0,1)$ and put
	$n:=\lfloor\log(1/a)\rfloor$ and $b:=\{\log(1/a)\}$. Then
	\begin{multline*}
		S(a):=\sum_{k\ge1} \log\Big(1+\frac{1}{a\eee^k}\Big)=\sum_{k\ge1} \log(1+\eee^{n+b-k})=\sum_{k\ge0} \log(1+\eee^{-k+b})\\+\sum_{k=1}^{n-1} \log(1+\eee^{k+b})
		=\sum_{k\ge0} \log(1+\eee^{-k+b})+\sum_{k=1}^{n-1}\Big( \log(1+\eee^{-k-b})+k+b\Big)\\
		=\!\sum_{k\ge0} \log(1+\eee^{-k+b})+\sum_{k\ge1} \log(1+\eee^{-k-b})+n(n-1)/2+b(n-1)-\sum_{k\ge n}\log(1+\eee^{-k-b}).
	\end{multline*}
	Note that $\sum_{k\ge n}\log(1+\eee^{-k-b})\le\sum_{k\ge n}\eee^{-k-b}<2\eee^{-n-b}=O(a)$ as $a\to0+$. Next, consider a function $f:[0,1]\to\mr$ defined via
	$$
	f(b):=\sum_{k\ge0} \log(1+\eee^{-k+b})+\sum_{k\ge1} \log(1+\eee^{-k-b}).
	$$
	The series $\sum_{k\ge0}\frac{\eee^{-k+b}}{1+\eee^{-k+b}}$ and $\sum_{k\ge1}\frac{\eee^{-k-b}}{1+\eee^{-k-b}}$ converge uniformly on $[0,1]$. Hence, differentiating termwise the series representing $f$ we obtain
	$$
		f^\prime(b)=\sum_{k\ge0}\frac{\eee^{-k+b}}{1+\eee^{-k+b}}-\sum_{k\ge1}\frac{\eee^{-k-b}}{1+\eee^{-k-b}}=\sum_{k\ge0}\frac{1}{1+\eee^{k-b}}-\sum_{k\ge1}\frac{1}{1+\eee^{k+b}}=h(b)+b+1/2
	$$
	and thereupon
	$$
	f(b)=\int_0^bf^\prime(x)\dd x+f(0)=h_2(b)+b^2/2+b/2+c_1.
	$$
	Summarizing,
	$$
	S(a)=(n+b)^2/2-(n+b)/2+c_1+h_2(b)+O(a),\quad a\to0+.
	$$
	
\end{proof}

\subsection{Proof of Theorem \ref{thm:main}(c)}

{\bf Step 1.} Determining $s_n$, the solution to $\psi^{\prime}(s)=n$.

Recalling that $r_k=c\eee^{-k}$ for $k\in\mn$,
we start with
$$
\psi^\prime(s)=\sum_{k\ge 1}\frac{r_k\eee^s}{r_k\eee^s+1-r_k}=\frac{\eee^s}{\eee^s-1}\sum_{k\ge 1}\frac{1}{1+
	(c(\eee^s-1))^{-1}\eee^k},\quad s\in\mr.
$$
An application of Lemma \ref{lem:beta1} with $a=(c(\eee^s-1))^{-1}$ yields
\begin{equation*}
	\psi^\prime(s)=s+\log c-1/2+h(\{\log c+\log(\eee^s-1)\})+O(s\eee^{-s}), \quad s\to\infty,
\end{equation*}
where $h$ is as defined in \eqref{eq:h} and, by Lemma \ref{lem:h}, $h(b)$ is of order $10^{-7}$ for $b\in[0,1]$.

For large enough $n$, $s=s_n$ is a solution to the equation $\psi^\prime(s)=n$, that is,
\begin{equation*}
	s+\log c-1/2+h(\{\log c+\log(\eee^s-1)\})+O(s\eee^{-s})=n.
\end{equation*}
Hence,
\begin{equation}\label{eq:s(c)}
	s_n =n+1/2-\log c-\varepsilon_n+O(n\eee^{-n}),\quad n\to\infty,
\end{equation}
where $\varepsilon_n:=h(\{\log c+s_n+\log(1-\eee^{-s_n})\})$. We intend to prove that $\varepsilon_n=o(1)$ as $n\to\infty$. To this end, consider any limit point $\varepsilon$, that is $\lim_{k\to\infty}\varepsilon_{n_k}=\varepsilon$ for some subsequence $(n_k)$. By Lemma \ref{lem:h}, $|\varepsilon_{n_k}|<1/4$, whence $|\varepsilon|\le 1/4$. With this at hand, since $\varepsilon_{n_k}=h(\{n_k+1/2-\varepsilon_{n_k}+o(1)\})=h(1/2-\varepsilon_{n_k}+o(1))$, we conclude that the limit $\varepsilon$ should satisfy the equation $\varepsilon=h(1/2-\varepsilon)$. Recalling~\eqref{eq:h} the latter is equivalent to
$$
\sum_{k\ge0}\frac{1}{1+\eee^{k+\varepsilon-1/2}}-\sum_{k\ge1}\frac{1}{1+\eee^{k+1/2-\varepsilon}}=1.
$$
The function on the left-hand side is strictly decreasing. Therefore,
$\varepsilon=0$ is the unique solution to $\varepsilon=h(1/2-\varepsilon)$. Summarizing,
\begin{equation}\label{eq:s(c)1}
	s_n =n+1/2-\log c+o(1),\quad n\to\infty,
\end{equation}

\noindent {\bf Step 2.}
Verifying the conditions of Theorem \ref{thm:caseC}.

\noindent (a) Fix $k\in\mn$. Invoking \eqref{eq:s(c)1}, as $n\to\infty$
$$
r_{n+k}\eee^{s_n}=\eee^{-k+1/2+o(1)}\to \eee^{-k+1/2}=:p_k$$ and $$\sup_{n\ge N}r_{n+k}\eee^{s_n}\le \eee^{-k+1/2}+\eee^{-k+1/2}
$$
for $N\in\mn$ such that $\eee^{o(1)}\le 2$ for all $n\ge N$. Here, $\alpha_k=\eee^{-k+1/2}$. Plainly, $$\sum_{k\geq 1}(p_k+\alpha_k)=2\sum_{k\ge1}\eee^{-k+1/2}<\infty.$$

\noindent (b) Fix $k\in\mn_0$. Again, invoking \eqref{eq:s(c)1}, as $n\to\infty$
$$
r_{n-k}^{-1}\eee^{-s_n}=\eee^{-k-1/2+o(1)}\to \eee^{-k-1/2}=:q_k$$ and $$\sup_{n\ge N}r_{n-k}^{-1}\eee^{-s_n}\le \eee^{-k-1/2}+\eee^{-k-1/2}
$$
for the same $N\in\mn$ as in (a). Here, $\beta_k=\eee^{-k-1/2}$, and
$$\sum_{k\geq 0}(q_k+\beta_k)=2\sum_{k\ge0
}\eee^{-k-1/2}<\infty.$$

\noindent (c) $q_0=\eee^{-1/2}>0$.

\noindent {\bf Step 3.} Asymptotic behavior of $\psi$.

By Lemma \ref{lem:beta2}, with the same $a=(c(\eee^s-1))^{-1}$ as before,
\begin{multline}\label{eq:psi_k(c)}
	\psi(s)=\sum_{k\ge 1}\log(1+c(\eee^s-1)\eee^{-k})=s^2/2+s(\log c-1/2)\\+(\log^2c)/2-(\log c)/2+c_1+h_2(\{\log c+\log(\eee^s-1)\})+O(s\eee^{-s}),\quad s\to\infty,
\end{multline}
where $h_2$ is as defined in Lemma \ref{lem:beta2} and, similarly to $h$, $h_2(b)$ is of order $10^{-7}$ for $b\in[0,1]$.

\noindent {\bf Step 4.} End of the proof.

In view of \eqref{eq:psi_k(c)},
\eqref{eq:s(c)} and $\psi^\prime(s_n)=n$,
\begin{multline*}
	\exp(\psi(s_n)-s_n\psi^\prime(s_n))
	=\exp\Big(-n^2/2+(\log c-1/2)n-1/8+\varepsilon^2_n/2\\+c_1+h_2\big(1/2-\varepsilon_n+O(n\eee^{-n})\big)+O(n^2\eee^{-n})\Big),\quad n\to\infty.
\end{multline*}
Recalling that $\lim_{n\to\infty}\varepsilon_n=0$ and $h_2$ is continuous, we arrive at as $n\to\infty$
\begin{multline*}
	\exp(\psi(s_n)-s_n\psi^\prime(s_n))
	\sim\exp\Big(-n^2/2+(\log c-1/2)n-1/8+c_1+h_2(1/2)\Big).
\end{multline*}
With this at hand, Theorem \ref{thm:caseC} yields
\begin{multline*}
	\mmp\{Y\ge n\}~\sim~\mmp\{Y=n\}~\sim~ c_0\exp(\psi(s_n)-s_n\psi^\prime(s_n)) \\\sim c_0\exp\Big(-n^2/2+(\log c-1/2)n-1/8+c_1+h_2(1/2)\Big),\quad n\to\infty,
\end{multline*}
where $c_0\in(0,1)$ is a constant defined in Theorem \ref{thm:caseC} with $p_k=\eee^{-k+1/2}$ for $k\in\mn$ and $q_k=\eee^{-k-1/2}$ for $k\in\mn_0$.

\subsection{Auxiliary results for $r_k=c\exp(-k^\beta)$, $\beta>1$}

Lemmas \ref{lem:d1} and \ref{lem:d2} provide asymptotic expansions for functional series which appear in the proof of Theorem \ref{thm:main}(d).

\begin{lemma}\label{lem:d1}
	For $\beta>1$,
	$$
	\sum_{k\ge1}\frac{1}{1+a\eee^{k^\beta}}=c_a-1+\frac{1}{1+a\eee^{c_a^\beta}}+\frac{1}{1+a\eee^{(c_a+1)^\beta}}+O(\eee^{-\frac{\beta}{2}(\log 1/a)^{1-1/\beta}}),\,\, a\to0+,
	$$
	where $c_a=\lfloor(\log1/a)^{1/\beta}\rfloor$.
\end{lemma}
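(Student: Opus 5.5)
Put $L:=\log(1/a)$ and $c_a=\lfloor L^{1/\beta}\rfloor$. Since $\beta>1$, the gaps $(k+1)^\beta-k^\beta$ grow, so $a\eee^{k^\beta}$ jumps very fast near $k\approx L^{1/\beta}$. Only the two indices $k=c_a$ and $k=c_a+1$ are kept exactly. For all other $k$ the summand is replaced by $1$ (when $k<c_a$) or by $0$ (when $k>c_a+1$). I split
$$
\sum_{k\ge1}\frac{1}{1+a\eee^{k^\beta}}=\sum_{k=1}^{c_a-1}\Big(1-\frac{a\eee^{k^\beta}}{1+a\eee^{k^\beta}}\Big)+\frac{1}{1+a\eee^{c_a^\beta}}+\frac{1}{1+a\eee^{(c_a+1)^\beta}}+\sum_{k\ge c_a+2}\frac{1}{1+a\eee^{k^\beta}}.
$$
This equals $c_a-1$ plus the two kept terms, minus $E_1:=\sum_{k\le c_a-1}\frac{a\eee^{k^\beta}}{1+a\eee^{k^\beta}}$, plus $E_2:=\sum_{k\ge c_a+2}\frac{1}{1+a\eee^{k^\beta}}$. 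It therefore remains to prove $E_1,E_2=O(\eee^{-\frac{\beta}{2}L^{1-1/\beta}})$.

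\textbf{Bound for $E_1$.} We have $E_1\le\sum_{k\le c_a-1}\eee^{k^\beta-L}$. Since $c_a\le L^{1/\beta}$, we get $(c_a-1)^\beta\le (L^{1/\beta}-1)^\beta$. For $k\le c_a-1$ consecutive exponents decrease at least geometrically when going down: $k^\beta-(k-1)^\beta\ge 1$, so the sum is at most $\frac{1}{1-\eee^{-1}}\eee^{(L^{1/\beta}-1)^\beta-L}$. By convexity, $(x-1)^\beta\le x^\beta-\beta(x-1)^{\beta-1}$. With $x=L^{1/\beta}$ this gives the bound $O(\exp(-\beta(L^{1/\beta}-1)^{\beta-1}))$. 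Since $(L^{1/\beta}-1)^{\beta-1}\sim L^{1-1/\beta}$, this is $O(\eee^{-\frac{\beta}{2}L^{1-1/\beta}})$ for small $a$; the factor $1/2$ absorbs the $\sim$.

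\textbf{Bound for $E_2$.} We have $E_2\le\sum_{k\ge c_a+2}\eee^{L-k^\beta}$. Because $c_a+1>L^{1/\beta}$, we get $c_a+2>L^{1/\beta}+1$, and hence $(c_a+2)^\beta-L\ge (L^{1/\beta}+1)^\beta-L\ge \beta L^{1-1/\beta}$ by the mean value theorem. The subsequent terms decrease at least like $\eee^{-(k-c_a-2)}$, again because the gaps of $k^\beta$ are at least $1$. So $E_2=O(\eee^{-\beta L^{1-1/\beta}})$, which is even better than needed.

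The only delicate point is the endpoint bookkeeping. One must check that $c_a-1$ and $c_a+2$ lie at distance at least $1$ from $L^{1/\beta}$, which follows from the definition of the floor, and that the convexity and mean value estimates give the exponent $\beta L^{1-1/\beta}(1+o(1))$. The stated constant $\beta/2$ leaves enough room for the $o(1)$ corrections. No Euler–Maclaurin argument is needed here, in contrast to the case $\beta<1$.
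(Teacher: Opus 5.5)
Your proof is correct and uses the paper's decomposition: keep the terms $k=c_a$ and $k=c_a+1$ exactly, and bound $\sum_{k\le c_a-1} a\eee^{k^\beta}/(1+a\eee^{k^\beta})$ and $\sum_{k\ge c_a+2}1/(1+a\eee^{k^\beta})$ by constant multiples of $a\eee^{(c_a-1)^\beta}$ and $a^{-1}\eee^{-(c_a+2)^\beta}$, respectively. The differences are minor technical choices. You get geometric domination from $k^\beta-(k-1)^\beta\ge 1$ and bound the exponents by the mean value theorem. The paper instead uses an integral comparison and a crude term count, together with Bernoulli's inequality and a Taylor expansion.
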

\begin{proof}
	It suffices to show that
	\begin{equation}\label{eq:d1a}
		\sum_{k\ge c_a+2}\frac{1}{1+a\eee^{k^\beta}}=O(\eee^{-\frac{\beta}{2}(\log 1/a)^{1-1/\beta}}),\quad a\to0+
	\end{equation}
	and
	\begin{equation}\label{eq:d1b}
		\sum_{k=1}^{ c_a-1}\frac{a\eee^{k^\beta}}{1+a\eee^{k^\beta}}=O(\eee^{-\frac{\beta}{2}(\log 1/a)^{1-1/\beta}}),\quad a\to0+.
	\end{equation}
	{\sc Proof of \eqref{eq:d1a}.} As a preparation, note that
	\begin{multline*}
		\eee^{(c_a+2)^\beta}\sum_{k\ge c_a+3}\eee^{-k^\beta}\le \eee^{(c_a+2)^\beta}\int_{c_a+2}^\infty\eee^{-x^\beta}\dd x\\=\beta^{-1}\eee^{(c_a+2)^\beta}\int_{(c_a+2)^\beta}^\infty y^{1/\beta-1}\eee^{-y}\dd y\leq \beta^{-1} (c_a+2)^{1-\beta}~\to~ 0, \quad a\to 0+.
	\end{multline*}
	With this at hand, we obtain as $a\to0+$
	\begin{equation}\label{eq:dlema1}
		\sum_{k\ge c_a+2}\frac{1}{1+a\eee^{k^\beta}}\le a^{-1}\sum_{k\ge c_a+2}\eee^{-k^\beta}~\sim~ a^{-1}\eee^{-(c_a+2)^\beta}=O\big(\eee^{-\beta(\log 1/a)^{1-1/\beta}}\big).
	\end{equation}
	To justify the equality, write $$a^{-1}\eee^{-(c_a+2)^\beta}\leq a^{-1}\eee^{-(1+(\log 1/a)^{-1/\beta})^\beta\log (1/a)}\le \eee^{-\beta(\log 1/a)^{1-1/\beta}}$$ having utilized for the last inequality $(1+x)^\beta\geq 1+\beta x$ for $x\geq 0$.
	
	\noindent{\sc Proof of \eqref{eq:d1b}.} Arguing similarly we infer as $a\to0+$
	\begin{multline*}
		\eee^{-(c_a-1)^\beta}\sum_{k=1}^{c_a-2}\eee^{k^\beta}\le(c_a-2)\eee^{-(c_a-1)^\beta+(c_a-2)^\beta}\\=(c_a-2)\eee^{-(c_a-1)^\beta(1-(1-(c_a-1)^{-1})^\beta)}\sim~ c_a \eee^{-(c_a-1)^\beta(\beta(c_a-1)^{-1}+O((c_a-1)^{-2}))}\to0.
	\end{multline*}
	As a consequence,
	\begin{equation}\label{eq:dlemma2}
		\sum_{k=1}^{ c_a-1}\frac{a\eee^{k^\beta}}{1+a\eee^{k^\beta}}\le a\sum_{k=1}^{ c_a-1}\eee^{k^\beta}~\sim~ a\eee^{(c_a-1)^\beta}=O\big(\eee^{-\frac{\beta}{2}(\log 1/a)^{1-1/\beta}}\big),\quad a\to 0+,
	\end{equation}
	where the equality is ensured by
	\begin{multline*}
		a\eee^{(c_a-1)^\beta}\le a\eee^{((\log1/a)^{1/\beta}-1)^\beta}= a\eee^{\log(1/a)(1-(\log1/a)^{-1/\beta})^\beta}\\=a\eee^{\log(1/a)(1-\beta(\log1/a)^{-1/\beta}+O(\log1/a)^{-2/\beta})}\le \eee^{-\frac{\beta}{2}(\log 1/a)^{1-1/\beta}}
	\end{multline*}
	for small $a>0$.
\end{proof}

\begin{lemma}\label{lem:d2}
	For $\beta>1$, as $a\to0+$,
	\begin{multline*}
		\sum_{k\ge1}\log\Big(1+\frac{1}{a\eee^{k^\beta}}\Big)=\log\Big(1+\frac{1}{a\eee^{(c_a+1)^\beta}}\Big)+\log(1+a\eee^{c_a^\beta})+c_a\log(1/a)-\frac{(\log1/a)^{1+1/\beta}}{\beta+1}\\-\zeta(-\beta)-\frac{1}{\beta+1}\sum_{k=1}^{1+\lfloor\beta\rfloor}(-1)^k\binom{\beta+1}{k}B_k(\{(\log1/a)^{1/\beta}\})(\log1/a)^{1-(k-1)/\beta}+o(1),
	\end{multline*}
	where $c_a=\lfloor (\log 1/a)^{1/\beta}\rfloor$ and, for $k\in\mn$, $B_k$ is the $k$th Bernoulli polynomial.
\end{lemma}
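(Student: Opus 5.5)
Set $L:=\log(1/a)$, $c_a=\lfloor L^{1/\beta}\rfloor$, and split the series at $c_a$, mirroring the proof of Lemma \ref{lem:d1}. The tail $\sum_{k\ge c_a+2}\log(1+a^{-1}\eee^{-k^\beta})$ is bounded by $a^{-1}\sum_{k\ge c_a+2}\eee^{-k^\beta}$, which is $O(\eee^{-\beta L^{1-1/\beta}})=o(1)$ by \eqref{eq:dlema1}. The term $k=c_a+1$ is kept as it stands and gives the first summand of the claim. For $1\le k\le c_a$ write
$$\log\big(1+a^{-1}\eee^{-k^\beta}\big)=L-k^\beta+\log\big(1+a\eee^{k^\beta}\big).$$
The correction terms with $k\le c_a-1$ sum to at most $a\sum_{k\le c_a-1}\eee^{k^\beta}=o(1)$ by \eqref{eq:dlemma2}. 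The term $k=c_a$ is kept, giving $\log(1+a\eee^{c_a^\beta})$. This reduces the claim to
$$c_aL-\sum_{k=1}^{c_a}k^\beta,$$
so everything hinges on a precise expansion of the power sum $E:=\sum_{k=1}^{c_a}k^\beta$.

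For $E$ I would again invoke Theorem 14 in \cite{Schumacher:2022}, as in the proof of Lemma \ref{lem:(b)series2}, now for exponent $\beta>1$. Writing $x:=L^{1/\beta}$, so that $c_a=x-\{x\}$, I expect it to give
$$\sum_{k=1}^{\lfloor x\rfloor}k^\beta=\frac{x^{\beta+1}}{\beta+1}+\zeta(-\beta)+\frac{1}{\beta+1}\sum_{k=1}^{\lfloor\beta\rfloor+1}(-1)^k\binom{\beta+1}{k}B_k(\{x\})\,x^{\beta+1-k}+o(1).$$
This is the Euler--Maclaurin expansion of the partial sum evaluated at a non-integer endpoint. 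The sign conventions and the argument $\{x\}$ versus $1-\{x\}$ in $B_k$ have to be checked against the case $\beta\in(0,1)$ already used in the paper, where the $k=1$ term is $-(\{x\}-1/2)x^\beta$. If the cited theorem only covers $\beta<1$, I would derive the expansion from scratch. The plan is to apply the Euler--Maclaurin formula to $t\mapsto t^\beta$ on $[1,x]$ with periodic Bernoulli functions up to order $\lfloor\beta\rfloor+2$. The remainder integral then converges at infinity, and the constants assemble into $\zeta(-\beta)$ through the standard analytic-continuation identity. The boundary terms at the non-integer endpoint $x$ produce $B_k(\{x\})$; alternatively one can sum up to $x$ via the Hurwitz zeta representation $\zeta(-\beta)-\zeta(-\beta,\lfloor x\rfloor+1)$ together with its large-argument expansion.

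Finally, substitute $x^{\beta+1-k}=L^{(\beta+1-k)/\beta}=L\cdot L^{-(k-1)/\beta}$. The leading term becomes $L^{1+1/\beta}/(\beta+1)$, and the Bernoulli terms become exactly $B_k(\{L^{1/\beta}\})L^{1-(k-1)/\beta}$. Terms with $k\ge\lfloor\beta\rfloor+2$ are $o(1)$, since $1-(k-1)/\beta<0$ there, which is why the sum stops at $1+\lfloor\beta\rfloor$. Assembling the pieces,
$$c_aL-E=c_aL-\frac{L^{1+1/\beta}}{\beta+1}-\zeta(-\beta)-\frac{1}{\beta+1}\sum_{k}\ldots+o(1),$$
which together with the two kept boundary terms is the claim. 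The main obstacle is the $E$ step: securing the power-sum asymptotics with a genuinely $o(1)$ error for all $\beta>1$, uniformly in the fractional part $\{x\}$, and getting the Bernoulli-polynomial conventions right.
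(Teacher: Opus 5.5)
Your proposal is correct and follows the paper's proof step for step. It uses the same split at $c_a$, disposes of the tail $k\ge c_a+2$ and of the corrections with $k\le c_a-1$ via \eqref{eq:dlema1} and \eqref{eq:dlemma2}, keeps the boundary terms $k=c_a$ and $k=c_a+1$, and handles the power sum $\sum_{k=1}^{c_a}k^\beta$ by Theorem~14 in \cite{Schumacher:2022}. Your guessed expansion of that power sum also has the correct signs and the correct argument $\{x\}$: it agrees with $\zeta(-\beta)-\zeta(-\beta,x+1-\{x\})$ expanded for large $x$ using $B_k(1-t)=(-1)^kB_k(t)$. So your fallback derivation would work as well.
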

\begin{proof}
	In view of \eqref{eq:dlema1},
	\[
	\sum_{k\ge c_a+2}\log\Big(1+\frac{1}{a\eee^{k^\beta}}\Big)\le a^{-1}\sum_{k\ge c_a+2}\eee^{-k^\beta}\to0, \quad a\to0+.
	\]
	\noindent Further,
	$$
	\sum_{k=1}^{ c_a}\log\Big(1+\frac{1}{a\eee^{k^\beta}}\Big)=\sum_{k=1}^{c_a}\log(1+a\eee^{k^\beta})-\sum_{k=1}^ {c_a}\log(a\eee^{k^\beta})=:S_1(a)-S_2(a).
	$$
	Note that the summand of $S_1(a)$ with $k=c_a$ appears in the final result. The rest of $S_1(a)$ is negligible:
	$$
	\sum_{k=1}^{c_a-1}\log(1+a\eee^{k^\beta})\le a\sum_{k=1}^{c_a-1}\eee^{k^\beta}\to0,\quad a\to0+
	$$
	as follows from \eqref{eq:dlemma2}. Finally, as $a\to0+$
	\begin{multline*}
		-S_2(a)=c_a\log(1/a)-\sum_{k=1}^ {c_a}k^{\beta}=c_a\log(1/a)-\frac{(\log1/a)^{1+1/\beta}}{\beta+1}-\zeta(-\beta)\\-\frac{1}{\beta+1}\sum_{k=1}^{1+\lfloor\beta\rfloor}(-1)^k\binom{\beta+1}{k}B_k(\{(\log1/a)^{1/\beta}\})(\log1/a)^{1-(k-1)/\beta}+o(1),
	\end{multline*}
	where the last equality follows directly from Theorem 14 in \cite{Schumacher:2022}.
\end{proof}
\begin{rem}\label{rem:d}
	Neither in Lemma \ref{lem:d1}, nor in Lemma \ref{lem:d2} were we able to analyze the behavior of terms with $k=c_a$ and $k=c_a+1$. Here is the reason of this failure. For instance, if along a chosen subsequence~$(a_n)$ the values of $(\log 1/a)^{1/\beta}$ are integer, then $c_a=(\log 1/a)^{1/\beta}$, whence $a\eee^{c_a^\beta}=1$. But if one chooses a subsequence $(a_n)$ along which the values of $(\log1/a)^{1/\beta}-1/2$ are integer, then $c_a=(\log 1/a)^{1/\beta}-1/2$, whence ${\lim_{a\to 0+}a\eee^{c_a^\beta}=0}$.
\end{rem}

\subsection{Proof of Theorem \ref{thm:main}(d)}\label{sect:proof d}

\noindent{\bf Step 1.} Determining $s_n$, the solution to $\psi^{\prime}(s)=n$.

Under the present assumption $r_k=c\eee^{-k^\beta}$ for $\beta>1$ and $k\in\mn$
we adopt the approach suggested in Remark \ref{rem}. The reason is discussed in Remark \ref{rem:d}.
\noindent Recall that
$$
\psi^\prime(s)=\sum_{k\ge 1}\frac{r_k\eee^s}{r_k\eee^s+1-r_k}=\frac{\eee^s}{\eee^s-1}\sum_{k\ge 1}\frac{1}{1+
	(c(\eee^s-1))^{-1}\eee^{k^\beta}},\quad s\in\mr.
$$
In view of Lemma \ref{lem:d1},
$$
\sum_{k\ge 1}\frac{1}{1+a\eee^{k^\beta}}=O\big(\log (1/a)^{1/\beta}\big),\quad a\to0+.
$$
Using this relation with $a=(c(\eee^s-1))^{-1}$ yields
$$
\psi^\prime(s)=\sum_{k\ge 1}\frac{1}{1+
	(c(\eee^s-1))^{-1}\eee^{k^\beta}}+O(\eee^{-s}s^{1/\beta}),\quad s\to\infty.
$$
\noindent We claim that $s_n=n^{\beta}+n^{(\beta-1)/2}$ is a solution to $\psi^\prime(s)=n+O(\eee^{-n^{(\beta-1)/2}})$.
Indeed, with $a=(c(\eee^{s_n}-1))^{-1}$, as $n\to\infty$,
\begin{multline}\label{eq:c_a=n}
	c_a=\lfloor (\log1/a)^{1/\beta}\rfloor=\lfloor (\log c+n^\beta+n^{(\beta-1)/2}+\log(1-\eee^{-n^\beta- n^{(\beta-1)/2}}))^{1/\beta}\rfloor\\=\Big\lfloor n\Big(1+\beta^{-1}(n^{-(\beta+1)/2}+n^{-\beta}(\log c+\log (1-\eee^{-n^\beta-n^{(\beta-1)/2}})))+O(n^{-(\beta+1)})\Big)\Big\rfloor=n,
\end{multline}

\begin{equation}\label{eq:c_a}
	a\eee^{c_a^\beta}=c^{-1}\eee^{n^\beta-s_n}(1-\eee^{-s_n})^{-1}\le 2c^{-1}\eee^{n^\beta-s_n}=O(\eee^{-n^{(\beta-1)/2}})
\end{equation}
and
\begin{equation}\label{eq:c_a+1}
	a^{-1}\eee^{-(c_a+1)^\beta}=c\eee^{-(n+1)^\beta+s_n}(1-\eee^{-s_n})\le c\eee^{-\beta n^{\beta-1}+O(n^{\beta-2})+n^{(\beta-1)/2}}=o(\eee^{-\beta n^{\beta-1}/2}).
\end{equation}
With this at hand, invoking Lemma \ref{lem:d1} we obtain, as $n\to\infty$,
$$
	\psi^\prime(s_n)=n-1+O(\eee^{-n^{(\beta-1)/2}})+1+o(\eee^{-\beta n^{\beta-1}/2})+O(\eee^{-\beta n^{\beta-1}/4})+o(\eee^{-n^\beta})=n+O(\eee^{-n^{(\beta-1)/2}}).
$$

\noindent {\bf Step 2.}
Verifying the condition (c) of Theorem \ref{thm:caseA}.

\noindent Recalling that $s_n=n^\beta+n^{(\beta-1)/2}$ we infer
$$
\eee^{-s_n}\sum_{k=1}^nr_k^{-1}\le c^{-1}\eee^{-s_n}n\eee^{n^\beta}\to 0,\quad n\to\infty.
$$

\noindent {\bf Step 3.} End of the proof.

To determine the asymptotic behavior of $\psi(s_n)=\sum_{k\ge1}\log(1+c\eee^{-k^\beta}(\eee^{s_n}-1))$, we apply Lemma \ref{lem:d2} with $a=(c(\eee^{s_n}-1))^{-1}$ and derive the asymptotic behavior of each term on the right-hand side of the equality in Lemma \ref{lem:d2} (excluding, of course, $-\zeta(-\beta)$). As $n\to\infty$,
\begin{itemize}
	\item $\log\big(1+\frac{1}{a\eee^{(c_a+1)^\beta}}\big)=O(\eee^{-\beta n^{\beta-1}/2})$ by \eqref{eq:c_a+1};
	\item $\log(1+a\eee^{c_a^\beta})=O(\eee^{-n^{(\beta-1)/2}})$ by \eqref{eq:c_a};
	\item $c_a\log(1/a)=n(\log c+ n^{\beta}+n^{(\beta-1)/2})+o(1)$ by \eqref{eq:c_a=n};
	\item $\displaystyle-\frac{(\log 1/a)^{1+1/\beta}}{\beta+1}\\=-\frac{1}{\beta+1}n^{\beta+1}\Big(1+n^{-(\beta+1)/2}+n^{-\beta}\big(\log c+\log(1-\eee^{-n^\beta- n^{(\beta-1)/2}})\big)\Big)^{1+1/\beta}\\=-\frac{1}{\beta+1}n^{\beta+1}\Big(1+\Big(1+\frac{1}{\beta}\Big)(n^{-(\beta+1)/2}+n^{-\beta}\log c)+
	\frac{\beta+1}{2\beta^2} n^{-(\beta+1)}\Big)+o(1)$;
	\item
	To analyze the last principal term, we first note that $(\log 1/a)^{1-(k-1)/\beta}= n^{\beta-(k-1)}+(1-(k-1)/\beta)n^{(\beta+1)/2-k}+o(1)$. Also, $\{(\log1/a)^{1/\beta}\}=\beta^{-1}(n^{-(\beta-1)/2}+(\log c) n^{1-\beta})-(2\beta)^{-1}(1-\beta^{-1})n^{-\beta}+o(n^{-\beta})$.
	Summarizing,
	\begin{align*}
		-&\frac{1}{\beta+1}\sum_{k=1}^{\lfloor \beta\rfloor+1}(-1)^k \binom{\beta+1}{k}B_k(\{(\log1/a)^{1/\beta}\})(\log1/a)^{1-(k-1)/\beta}\\=&-\frac{1}{\beta+1}\sum_{k=1}^{1+\lfloor \beta\rfloor}(-1)^k\binom{\beta+1}{k} B_k\Big(\frac{1}{\beta}n^{(1-\beta)/2}
		+\frac{\log c}{\beta}n^{1-\beta}
		-\frac{\beta-1}{2\beta^2}n^{-\beta}
		\Big)\\&\times(n^{\beta-(k-1)}+\Big(1-\frac{k-1}{\beta}\Big)\1_{\{k\leq \lfloor (1+\beta)/2\rfloor\}}n^{(\beta+1)/2-k})+o(1).
	\end{align*}
\end{itemize}
Combining all fragments together we obtain
\begin{multline*}
	\exp (\psi(s_n)-s_n
	\psi^\prime(s_n))~\sim~ \exp\Big(-\frac{1}{\beta+1}n^{\beta+1}-\frac{1}{\beta}n^{(\beta+1)/2}+\log c\Big(1-\frac{1}{\beta}\Big)n-\frac{1}{2\beta^2}\\-\frac{1}{\beta+1}\sum_{k=1}^{\lfloor \beta\rfloor+1}(-1)^k\binom{\beta+1}{k}B_k\Big(\frac{1}{\beta}n^{-(\beta-1)/2}+\frac{\log c}{\beta} n^{1-\beta}-\frac{\beta-1}{2\beta^2}n^{-\beta}\Big)\\\times\Big(n^{\beta-(k-1)}+\Big(1-\frac{k-1}{\beta}\Big)\1_{\{k\leq \lfloor (\beta+1)/2\rfloor\}}n^{(\beta+1)/2-k}\Big)\Big),\quad n\to\infty.
\end{multline*}
Now the statement of Theorem \ref{thm:main}(d) is a direct consequence of Theorem \ref{thm:caseA}.

\noindent {\bf Acknowledgement}. We thank the anonymous referee for a thorough review and constructive suggestions. In particular, these led to significant improvements in the proofs of Theorem~\ref{thm:caseA} and Lemma~\ref{lem:module}. The work of A. Iksanov was supported by the National Research Foundation of Ukraine (project
2023.03/0059 ‘Contribution to modern theory of random series’). The work of V. Kotelnikova was supported by a grant from the IMU, supported by the Friends of IMU (FIMU), under the IMU Breakout Fellowship Program.

\end{document}